\documentclass[final]{elsarticle}
\usepackage[top=2.5cm, bottom=2.5cm, left=3cm, right=3cm]{geometry}

\usepackage{hyperref}
\usepackage{graphicx}
\usepackage{amsfonts}
\usepackage{amssymb}
\usepackage{amsmath}
\usepackage{amsthm}
\usepackage{float}
\usepackage{exscale}
\usepackage{relsize}
\usepackage{bm}
\usepackage{subeqnarray}
\usepackage{fancyhdr}
\usepackage{lastpage}
\usepackage{layout}
\usepackage{cleveref}
\usepackage{appendix}
 
\newtheorem{thm}{Theorem}[section]
\newtheorem{corollary}[thm]{Corollary}
\newtheorem{proposition}[thm]{Proposition}
\newtheorem{lemma}[thm]{Lemma}

\newcommand{\bzero}{\mathbf{0}}

\newcommand{\trace}{\mathrm{Tr}}
\newcommand{\MM}{\mathcal{M}}
\newcommand{\diag}{\mathrm{diag}}

\begin{document}

\begin{frontmatter}

\title{\textbf{Generalizing Lieb's Concavity Theorem \\
via Operator Interpolation}}
\author{De Huang\fnref{myfootnote} }
\address{Applied and Computational Mathematics, California Institute of Technology, Pasadena, CA 91125, USA}
\fntext[myfootnote]{E-mail address:\ dhuang@caltech.edu.}
 
\begin{abstract}
We introduce the notion of $k$-trace and use interpolation of operators to prove the joint concavity of the function $(A,B)\mapsto\trace_k\big[(B^\frac{q}{2}K^*A^pKB^\frac{q}{2})^s\big]^\frac{1}{k}$, which generalizes Lieb's concavity theorem and its following results from trace to a class of homogeneous functions $\trace_k[\cdot]^\frac{1}{k}$. Here $\trace_k[A]$ denotes the $k_{\text{th}}$ elementary symmetric polynomial of the eigenvalues of $A$. This result gives an alternative proof for the concavity of $A\mapsto\trace_k\big[\exp(H+\log A)\big]^\frac{1}{k}$ that was obtained and used in a recent work to derive expectation estimates and tail bounds on partial spectral sums of random matrices. 

\end{abstract}

\begin{keyword}
trace inequalities, concave/convex matrix functions, interpolation of operators.
\MSC[2010] 47A57, 47A63, 15A42, 15A16, 15A75
\end{keyword}

\end{frontmatter}

\section{Introduction}
A fundamental result in the study of trace inequalities is the joint concavity of the function 
\begin{equation}\label{eqt:LCT}
(A,B)\ \longmapsto\ \trace[K^*A^pKB^q]
\end{equation}
on $\mathbf{H}_n^+\times\mathbf{H}_m^+$, for any $K\in \mathbb{C}^{n\times m}$, $p,q\in[0,1], p+q\leq 1$, known as Lieb's Concavity Theorem \cite{LIEB1973267}. Here $\mathbf{H}_n^+$ is the convex cone of all $n \times n$ Hermitian, positive semidefinite matrices. This theorem answered affirmatively an important conjecture by Wigner, Yanase and Dyson \cite{wigner1963information} in information theory. It also led to Lieb's three-matrix extension of the Golden--Thompson inequality, which was then used by Lieb and Ruskai \cite{doi:10.1063/1.1666274} to prove the strong subadditivity of quantum entropy. 

In this paper, we generalize Lieb's concavity theorem from trace to a class of homogeneous matrix functions. In particular, we will prove that the function 
\begin{equation}\label{eqt:GLCT}
(A,B) \ \longmapsto\ \trace_k\big[(B^\frac{q}{2}K^*A^pKB^\frac{q}{2})^s\big]^\frac{1}{k} 
\end{equation}
is jointly concave on $\mathbf{H}_n^+\times\mathbf{H}_m^+$, for any $K\in \mathbb{C}^{n\times m}$ and any $p,q\in[0,1], s\in[0,\frac{1}{p+q}]$. $\trace_k[A]$ denotes the $k_{\text{th}}$ elementary symmetric polynomial of the eigenvalues of $A$, specially $\trace_1[A]=\trace[A]$ and $\trace_n[A]=\det[A]$ if $A\in\mathbb{C}^{n\times n}$. In the case $k=1$, the concavity of function \eqref{eqt:GLCT} has been studied by many and results with an increasing range of $s$ have been obtained over time: $1\leq s\leq \frac{1}{p+q}$ \cite{hiai2001concavity} and $\frac{1}{2}\leq s\leq \frac{1}{p+q}$ \cite{hiai2013concavity} by Hiai, and $0\leq s\leq \frac{1}{1+q}$ by Carlen, Frank and Lieb \cite{carlen2016some}. These partial results together already suffice to conclude the concavity for the full range $0\leq p,q\leq 1,0\leq s\leq \frac{1}{p+q}$. The first complete proof of concavity covering the full range is due to Hiai \cite{hiai2016concavity}. Meanwhile, the convexity of function \eqref{eqt:GLCT} with $k=1$ for different ranges of $p,q,s$ has also been established. A complete convexity/concavity result for the full range of $p,q,s$ was recently accomplished by Zhang \cite{ZHANG2020107053} using an elegant variational approach that is modified from a variational method developed by Carlen and Lieb \cite{Carlen2008}. Here ``full'' means the conditions are also necessary for the corresponding convexity/concavity to hold for all dimensions $n,m$. We recommend the papers \cite{carlen2018inequalities} by Carlen, Frank and Lieb, and \cite{ZHANG2020107053} by Zhang for historical overviews on this topic in more detail. In this paper, we will only work on the concavity of function \eqref{eqt:GLCT}, since the map $A\mapsto\trace_k[A]^\frac{1}{k}$ is homogeneous of order 1 and concave on $\mathbf{H}_n^+$ for $k\geq 2$. Our work therefore extends the established concavity results from the normal trace to a class of $k$-trace functions. 

The motivation of deriving our generalized Lieb's concavity theorem is to provide an alternative proof for the concavity of the map
\begin{equation}\label{eqt:GLT}
A\ \longmapsto\ \trace_k\big[\exp(H+\log A)\big]^\frac{1}{k}
\end{equation}
on $\mathbf{H}_n^{++}$ (positive definite), for any Hermitian matrix $H$ of the same size, due to a recent work by Huang \cite{HUANG2019419}, and hence completing the theory behind it. For $k=1$, the concavity of 
\begin{equation}\label{eqt:LT}
A\ \longmapsto\ \trace\big[\exp(H+\log A)\big],
\end{equation}
also due to Lieb \cite{LIEB1973267}, is an equivalence of Lieb's concavity theorem. Tropp \cite{Tropp2012,MAL-048} made use of the concavity of \eqref{eqt:LT} to establish his master bounds on the largest (or smallest) eigenvalue of a sum of random matrices. Huang \cite{HUANG2019419} introduced the notion of $k$-trace functions $\trace_k$ to provide estimates on partial spectral sums of Hermitian matrices, and then used the concavity of \eqref{eqt:GLT} to generalize Tropp's master bounds from the largest (or smallest) eigenvalue to the sum of the $k$ largest (or smallest) eigenvalues. Huang's proof of the concavity of \eqref{eqt:GLT} in \cite{HUANG2019419} was an imitation of Lieb's original arguments using matrix derivatives, which, however, failed to extend to the more complicated function \eqref{eqt:GLCT}. We then looked for more profound approaches to see a bigger picture.

Since Lieb's original establishment of his concavity theorem, alternative proofs have been developed from different aspects of matrix theories, including matrix tensors (Ando \cite{ANDO1979203}, Carlen \cite{carlen2010trace}, Nikoufar et al. \cite{NIKOUFAR2013531}), the theory of Herglotz functions (Epstein \cite{epstein1973remarks}), and interpolation theories (Uhlmann \cite{uhlmann1977relative}, Kosaki \cite{kosaki1982interpolation}). The tensor approaches prove the theorem elegantly by translating the concavity of \eqref{eqt:LCT} to the operator concavity of the map $(A,B)\mapsto A^p\otimes B^q$, but have difficulties in generalizing to our $k$-trace case due to the nonlinearity of $\trace_k$. However, the $k$-trace has two good properties that are most essential to the desired results: (i) the map $A\mapsto\trace_k[A]^\frac{1}{k}$ is concave on $\mathbf{H}_n^+$, and (ii) the $k$-trace satisfies H\"older's inequality as the normal trace does. We, therefore, turned to the more generalizable methods of operator interpolation based essentially on H\"older's inequality. Originating from the Hadamard three-lines theorem \cite{hadamard1899theoreme}, the interpolation of operators has been a powerful tool in operator and functional analysis, with variant versions including the Riesz--Thorin interpolation theorem \cite{riesz1926maxima}, Stein's interpolation of holomorphic operators \cite{stein1956interpolation}, Peetre's K-method \cite{peetre1963theory} and many others. In particular, we found Stein's complex interpolation technique most compatible and easiest to use in the $k$-trace setting. Our use of interpolation technique was inspired by a recent work of Sutter et al. \cite{sutter2017multivariate}, in which they applied Stein's interpolation to derive a multivariate extension of the Golden--Thompson inequality. This interpolation technique will help us first prove a key lemma that the function 
\begin{equation}\label{eqt:GE}
A \ \longmapsto\ \trace_k\big[(K^*A^pK)^s\big]^\frac{1}{k}
\end{equation}
is concave on $\mathbf{H}_n^+$, for any $K\in \mathbb{C}^{n\times n}$ and any $p\in[0,1],s\in[0,\frac{1}{p}]$. Note that function \eqref{eqt:GE} is a special case of function \eqref{eqt:GLCT} with $q=0$. Given this lemma, the concavity in the more general case can be obtained via a powerful variational argument that originates in \cite{Carlen2008} by Carlen and Lieb. This kind of variational methods, introducing supremum/infimum characterizations of trace functions, has been widely used in the study of the convexity/concavity of function \eqref{eqt:GLCT} and its variants in the trace case (see e.g. \cite{carlen2010trace,Carlen2008,carlen2016some,carlen2018inequalities}). Our proof for the $k$-trace case will be imitating a refined variational approach by Zhang \cite{ZHANG2020107053}, which is again based on H\"older's inequalities.

\subsection*{outline}
The rest of the paper is organized as follows. \Cref{sec:Notations} is devoted to introductions of general notations and the notion of $k$-trace. We will present our main theorems and their proofs in \Cref{sec:Main}, and briefly review previous proofs of Lieb's original concavity theorem. We also introduce in \Cref{sec:Main} the operator interpolation technique that we will use. Two other results on $k$-trace will be given in \Cref{sec:OtherResults}. Some details of background knowledge are discussed in the appendices. 

\section{Notations and $k$-trace}
\label{sec:Notations}

\subsection{General conventions}
For any positive integers $n,m$, we write $\mathbb{C}^n$ for the $n$-dimensional complex vector spaces equipped with the standard $l_2$ inner products, and $\mathbb{C}^{n\times m}$ for the space of all complex matrices of size $n\times m$. Let $\mathbf{H}_n$ be the space of all $n\times n$ Hermitian matrices, $\mathbf{H}_n^+$ be the convex cone of all $n\times n$ Hermitian, positive-semidefinite matrices, and $\mathbf{H}_n^{++}$ be the convex cone of all $n\times n$ Hermitian, positive definite matrices. We denote the Loewner partial orders on $\mathbf{H}_n$ in the usual way: for any $A,B\in \mathbf{H}_n$, we write $A\succeq B$ or $B\preceq A$ if $A-B\in \mathbf{H}_n^+$, and write $A\succ B$ or $B\prec A$ if $A-B\in \mathbf{H}_n^{++}$. We write $\bf{0}$ for square zero matrices of suitable size according to the context, and $I_n$ for the identity matrix of size $n \times n$.

Following the notations in \cite{HUANG2019419}, the $k$-trace of a matrix $A\in\mathbb{C}^{n\times n}$ is defined as 
\begin{equation}
\label{def:k-trace}
\trace_k[A] = \sum_{1\leq i_1<i_2<\cdots<i_k\leq n}\lambda_{i_1}\lambda_{i_2}\cdots\lambda_{i_k},\qquad 1\leq k\leq n,
\end{equation}
where $\lambda_1,\lambda_2,\cdots,\lambda_n$ are all eigenvalues of $A$. In particular, $\trace_1[A]=\trace[A]$ is the normal trace of $A$, and $\trace_n[A]=\det[A]$ is the determinant of $A$. If we write $A_{(i_1\cdots i_k,i_1\cdots i_k)}$ for the $k\times k$ principal submatrix of $A$ corresponding to the indices $i_1,i_2,\cdots,i_k$, then an equivalent definition of the $k$-trace of $A$ is given by 
\begin{equation}
\label{def:k-trace2}
\trace_k[A] = \sum_{1\leq i_1<i_2<\cdots<i_k\leq n}\det[A_{(i_1\cdots i_k,i_1\cdots i_k)}],\qquad 1\leq k\leq n.
\end{equation}
Using the second definition \eqref{def:k-trace2}, one can check that for any $1\leq k\leq n$, the $k$-trace enjoys the cyclic invariance like the normal trace and the determinant. That is for any $A,B\in \mathbb{C}^{n\times n}$, $\trace_k[AB]=\trace_k[BA]$.

For any function $f:\mathbb{R}\rightarrow\mathbb{R}$, the extension of $f$ to a function from $\mathbf{H}_n$ to $\mathbf{H}_n$ is given by 
\[f(A)=\sum_{i=1}^nf(\lambda_i)u_iu_i^*, \quad A\in \mathbf{H}_n,\]
where $\lambda_1,\lambda_2,\cdots,\lambda_n$ are the eigenvalues of $A$, and $u_1,u_2,\cdots,u_n\in\mathbb{C}^n$ are the corresponding normalized eigenvectors. A function $f$ is said to be operator monotone increasing (or decreasing) if $A\succeq B$ implies $f(A)\succeq f(B)$ (or $f(A)\preceq f(B)$); $f$ is said to be operator convex (or concave) on some convex set $S$, if
\[\tau f(A)+(1-\tau) f(B) \succeq f(\tau A+(1-\tau) B)\ (\text{or} \preceq f(\tau A+(1-\tau) B)),\] 
for any $A,B\in S$ and any $\tau\in[0,1]$. For example, the function $A\mapsto A^r$ is both operator monotone increasing and operator concave on $\mathbf{H}_n^+$ for $r\in[0,1]$ (the Loewner--Heinz theorem \cite{lowner1934monotone}, \cite{heinz1951beitrage}, \cite{kraus1936konvexe}, see also \cite{carlen2010trace}). One can find more details and properties of matrix functions in \cite{carlen2010trace,vershynina2013matrix}. For any $A\in\mathbb{C}^{n\times m}$, we denote by $\|A\|_p$ the standard Schatten $p$-norm,
\begin{equation}
\|A\|_p = \trace[|A|^p]^\frac{1}{p},
\end{equation}
where $|A|=(A^*A)^\frac{1}{2}$. In particular, we write $\|A\| = \|A\|_{\infty}=$ the largest singular value of $A$.

\subsection{$k$-trace}
Huang \cite{HUANG2019419} introduced the notion of $k$-trace to provide bounds on the sum of the $k$ largest (or smallest) eigenvalues of a matrix $A\in \mathbf{H}_n$, 
\begin{align*}
&\sum_{i=1}^k\lambda_i(A) \leq \log \trace_k \big[\exp(A)\big]\leq \sum_{i=1}^k\lambda_i(A) + \log \binom{n}{k},\\
&\sum_{i=1}^k\lambda_{n-i+1}(A) \geq -\log \trace_k \big[\exp(-A)\big]\geq \sum_{i=1}^k\lambda_{n-i+1}(A) - \log \binom{n}{k},
\end{align*}
where $\lambda_i(A)$ denotes the $i_{\text{th}}$ largest eigenvalue of $A$. Huang then used these estimates and the concavity of $A\mapsto \trace_k[\exp(H+\log A)]^\frac{1}{k}$ to derive expectation estimates and tail bounds for the sum of the $k$ largest (or smallest) eigenvalues of a class of random matrices. Apart from this particular application, the $k$-trace is of theoretical interest in itself, as it has many interpretations corresponding to different aspects of matrix theories. Writing $D(A^{(1)},A^{(2)},\cdots,A^{(n)})$ the mixed discriminant of any $n$ matrices $A^{(1)},A^{(2)},\cdots,A^{(n)}\in \mathbb{C}^{n\times n}$, we then have the identity 
\[\trace_k[A] = \binom{n}{k}D(\underbrace{A,\cdots,A}_{k},\underbrace{I_n,\cdots,I_n}_{n-k}),\]
which as well connects the $k$-trace to the notion of the $k_\text{th}$ intrinsic volume in convex geometry. Also, if we consider the $k_\text{th}$ exterior algebra $\wedge^k(\mathbb{C}^n)$, we can then interpret the $k$-trace of $A$ as 
\[\trace_k[A] = \trace_{\mathcal{L}(\wedge^k(\mathbb{C}^n))}\big[\MM^{(k)}_0(A)\big],\]
where $\trace_{\mathcal{L}(\wedge^k(\mathbb{C}^n))}$ is the normal trace on the operator space $\mathcal{L}(\wedge^k(\mathbb{C}^n))$, and $\MM^{(k)}_0(A)\in \mathcal{L}(\wedge^k(\mathbb{C}^n))$ is defined as $\MM^{(k)}_0(A)(v_1\wedge v_2\wedge\cdots \wedge v_k) = Av_1\wedge Av_2\wedge\cdots \wedge Av_k$, for any $v_1\wedge v_2\wedge\cdots \wedge v_k\in \wedge^k(\mathbb{C}^n)$. More discussions on these two interpretations and how they can be used to study the $k$-trace will be presented in \Cref{Apdix:MixedDiscriminant} and \Cref{Apdix:ExteriorAlgebra}. 

Throughout the paper, we will be using the following properties of the $k$-trace.
\begin{proposition}
\label{prop:ktrace}
For any positive integers $n,k$, $1\leq k\leq n$, the $k$-trace function $\trace_k[\cdot]$ satisfies the following:
\begin{itemize}
\item[(i)] Cyclicity: $\trace_k[AB] = \trace_k[BA]$, $A,B\in \mathbb{C}^{n\times n}$.
\item[(ii)] Homogeneity: $\trace_k[\alpha A] = \alpha^k\trace_k[A]$, $A\in \mathbb{C}^{n\times n}, \alpha\in \mathbb{C}$.
\item[(iii)] Monotonicity: For any $A,B\in \mathbf{H}_n^+$, $\trace_k[A]\geq \trace_k[B]$, if $A\succeq B$; $\trace_k[A]> \trace_k[B]$, if $A\succ B$. In particular, $\trace_k[A]\geq 0, A\in \mathbf{H}_n^+$.  
\item[(iv)] Concavity: The function $A\mapsto (\trace_k[A])^\frac{1}{k}$ is concave on $\mathbf{H}_n^+$.
\item[(v)] H\"older's Inequality: $\trace_k[|AB|^r]^\frac{1}{r}\leq \trace_k[|A|^p]^\frac{1}{p}\trace_k[|B|^q]^\frac{1}{q}$, for any $r,p,q\in(0,+\infty],\frac{1}{p}+\frac{1}{q}=\frac{1}{r}$, and any $A,B\in \mathbb{C}^{n \times n}$.
\item[(vi)] Consistency: For any $\tilde{n}$, $k\leq \tilde{n}\leq n$, and any $A\in \mathbb{C}^{\tilde{n}\times \tilde{n}}$, $\trace_k\left[\left(\begin{array}{cc}
A & \bm{0} \\
\bm{0} & \bm{0}
\end{array}\right)_{n\times n}\right]
=\trace_k[A]$. 
\end{itemize}
\end{proposition}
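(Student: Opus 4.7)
The plan is to prove all six properties in a unified way, using the exterior-algebra realization
\[
\trace_k[A] = \trace_{\mathcal{L}(\wedge^k(\mathbb{C}^n))}\!\bigl[\MM^{(k)}_0(A)\bigr]
\]
recalled above as the common engine. The key structural observation is that $A\mapsto \MM^{(k)}_0(A)$ is a multiplicative, $k$-homogeneous, $*$-preserving map from $\mathbb{C}^{n\times n}$ into $\mathcal{L}(\wedge^k(\mathbb{C}^n))$, i.e.\ $\MM^{(k)}_0(AB)=\MM^{(k)}_0(A)\MM^{(k)}_0(B)$, $\MM^{(k)}_0(\alpha A)=\alpha^k\MM^{(k)}_0(A)$, and $\MM^{(k)}_0(A^*)=\MM^{(k)}_0(A)^*$, all of which follow directly from the action on decomposable wedges $v_1\wedge\cdots\wedge v_k$. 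Consequently, $A\succeq 0$ forces $\MM^{(k)}_0(A)\succeq 0$ via $A=C^*C$ and $\MM^{(k)}_0(A)=\MM^{(k)}_0(C)^*\MM^{(k)}_0(C)$.

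With this in hand, (i) cyclicity is just cyclicity of the ordinary trace applied to $\MM^{(k)}_0(A)\MM^{(k)}_0(B)$; (ii) homogeneity is the $k$-homogeneity of $\MM^{(k)}_0$; and (vi) consistency follows because, in a block-zero padding of $A$, $\MM^{(k)}_0$ annihilates every wedge that touches a padded basis vector, so up to reindexing the padded representation decomposes as $\MM^{(k)}_0(A)\oplus \bzero$ on $\mathcal{L}(\wedge^k(\mathbb{C}^n))$. For (iii) monotonicity I would instead pass to eigenvalues rather than through $\MM^{(k)}_0$: writing $\trace_k[A]=e_k(\lambda_1(A),\ldots,\lambda_n(A))$ with $e_k$ the $k$-th elementary symmetric polynomial, Weyl's monotonicity gives $\lambda_i(A)\geq\lambda_i(B)$ (strict when $A\succ B$), and $e_k$ is monotone (strictly so on strictly dominating nonnegative arguments) on $\mathbb{R}_{\geq 0}^n$.

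For (v) H\"older's inequality, the pivotal identity is $|\MM^{(k)}_0(A)|^p=\MM^{(k)}_0(|A|^p)$. It is obtained from an SVD $A=U\Sigma V^*$ by noting that $\MM^{(k)}_0(U)$ and $\MM^{(k)}_0(V)$ are unitary and $\MM^{(k)}_0(\Sigma)$ is nonnegative diagonal, so that $|\MM^{(k)}_0(A)|=\MM^{(k)}_0(V\Sigma V^*)=\MM^{(k)}_0(|A|)$; functional calculus then handles the power $p$. Taking traces yields $\|\MM^{(k)}_0(A)\|_p^p=\trace_k[|A|^p]$, and (v) is immediate from the ordinary Schatten H\"older inequality on $\mathcal{L}(\wedge^k(\mathbb{C}^n))$ applied to $\MM^{(k)}_0(A)$ and $\MM^{(k)}_0(B)$, combined with cyclicity of the trace.

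The main obstacle is (iv), the concavity of $A\mapsto \trace_k[A]^{1/k}$ on $\mathbf{H}_n^+$. Under $\MM^{(k)}_0$ this map becomes the $k$-th root of a degree-$k$ polynomial in the entries of $A$, and that root is not controlled by the representation itself. I would reduce the claim, via the mixed-discriminant identity
\[
\trace_k[A] = \binom{n}{k}\, D\bigl(\underbrace{A,\ldots,A}_{k},\underbrace{I_n,\ldots,I_n}_{n-k}\bigr),
\]
to Alexandrov's inequality for mixed discriminants, which implies that $X \mapsto D(X,\ldots,X,A_{k+1},\ldots,A_n)^{1/k}$ is concave on $\mathbf{H}_n^{++}$ for fixed positive definite $A_{k+1},\ldots,A_n$; specializing $A_{k+1}=\cdots=A_n=I_n$ and then extending to $\mathbf{H}_n^+$ by continuity completes (iv). The statement and proof of Alexandrov's inequality, and the detailed reduction, will be deferred to the appendix on mixed discriminants.
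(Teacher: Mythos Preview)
Your proposal is correct and takes essentially the same approach as the paper. The paper likewise handles (i), (ii), (iii), (vi) as direct consequences of the definitions, proves (v) by pushing through $\MM^{(k)}_0$ and applying the ordinary Schatten H\"older inequality (exactly your argument, including the identity $|\MM^{(k)}_0(A)|=\MM^{(k)}_0(|A|)$), and derives (iv) from the mixed-discriminant identity together with the general Brunn--Minkowski theorem for mixed discriminants, which in turn rests on the Alexandrov--Fenchel inequality---precisely the reduction you outline.
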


\begin{proof}
(i), (ii), (iii) and (vi) can be easily verified by the definitions \eqref{def:k-trace} and \eqref{def:k-trace2}. (iv) is a consequence of the general Brunn--Minkowski theorem (\Cref{cor:BMtheorem}) in \Cref{Apdix:MixedDiscriminant}. (v) is a direct result of expression \eqref{eqt:extrace2ktrace} in \Cref{Apdix:ExteriorAlgebra}. In fact, since the normal trace enjoys the H\"older's inequality, we have
\begin{align*}
\trace_k[|AB|^r]^\frac{1}{r}=&\ \trace[\big|\MM_0^{(k)}(A)\MM_0^{(k)}(B)\big|^r]^\frac{1}{r} \\
\leq&\ \trace[|\MM_0^{(k)}(A)|^p]^\frac{1}{p}\trace[|\MM_0^{(k)}(B)|^q]^\frac{1}{q}\\
=&\ \trace_k[|A|^p]^\frac{1}{p}\trace_k[|B|^q]^\frac{1}{q}.
\end{align*}
We have used multiple properties of the operator $\MM_0^{(k)}(A)$ introduced in \Cref{Apdix:ExteriorAlgebra}.
\end{proof}

\section{Generalizing Lieb's Concavity Theorem}
\label{sec:Main}

\subsection{Main Theorems}
In what follows, we will fix the integer $k$ and always write 
\[\phi(A) = (\trace_k[A])^\frac{1}{k}\]
for simplicity. Note that the function $\phi$ also satisfies (i) cyclicity, (iii) monotonicity, (v) H\"older's inequality and (vi) consistency as in \Cref{prop:ktrace}. But now the map $A\mapsto \phi(A)$ is homogeneous of order 1 and is concave on $\mathbf{H}_n^+$. Abusing notation, we will also refer the function $\phi$ as the $k$-trace. Our main results of this paper are the following.

\begin{lemma}\label{lem:GeneralEpstein}
For any $r\in[0,1],s\in[0,\frac{1}{r}]$ and any $K\in \mathbb{C}^{n\times n}$, the function
\begin{equation}
A \ \longmapsto\ \phi\big((K^*A^rK)^s\big) 
\label{eqt:function1}
\end{equation}
is concave on $\mathbf{H}_n^+$. 
\end{lemma}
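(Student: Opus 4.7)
Plan: My strategy would be a Stein-type complex interpolation argument adapted to the $k$-trace, in the spirit of Sutter et al.~\cite{sutter2017multivariate}, combined with the variational representation of $\phi$ coming from H\"older's inequality (Proposition~\ref{prop:ktrace}(v)) on the exterior algebra $\wedge^k(\mathbb{C}^n)$. Writing $g(A):=\phi((K^*A^{rs}K)^{1/s})$, continuity and positive homogeneity reduce matters to proving the midpoint concavity $g(A_\tau)\ge(1-\tau)g(A_0)+\tau g(A_1)$ for $A_\tau:=(1-\tau)A_0+\tau A_1$ and $A_0,A_1\in\mathbf{H}_n^{++}$. The case $s=1$ is immediate: $A\mapsto A^r$ is operator concave by L\"owner-Heinz, hence $A\mapsto K^*A^rK$ is operator concave, and composing with the scalar-concave, monotone functional $\phi$ (Proposition~\ref{prop:ktrace}) yields concavity. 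The genuine difficulty is $s<1$, where the outer $(\cdot)^{1/s}$ is neither operator concave nor operator monotone, so a direct composition argument is unavailable.

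The crucial move is to absorb the outer power into a supremum over a dual variable on $\wedge^k(\mathbb{C}^n)$. Using $\trace_k[X]=\trace[\MM^{(k)}_0(X)]$, the identity $\MM^{(k)}_0(X^{1/s})=\MM^{(k)}_0(X)^{1/s}$ for $X\succeq 0$, and Schatten-norm duality on the exterior space,
\[\phi\bigl((K^*A^{rs}K)^{1/s}\bigr)^{ks}=\|\MM^{(k)}_0(K^*A^{rs}K)\|_{1/s}=\sup\{\trace[\MM^{(k)}_0(K^*A^{rs}K)\,W]:W\succeq 0,\,\|W\|_{1/(1-s)}\le 1\}.\]
For each admissible $W$, I would then build a holomorphic operator-valued function $F_W(z)$ on the strip $\{z\in\mathbb{C}:0\le\Re z\le 1\}$ using complex powers of $A_0$ and $A_1$ conjugated with $K$ and $W^{1/2}$ and lifted via $\MM^{(k)}_0$, arranged so that a suitable Schatten norm of $F_W(\tau)$ reproduces the integrand of the supremum at $A=A_\tau$, while the boundary-line Schatten norms at $\Re z=0$ and $\Re z=1$ are controlled by $g(A_0)$ and $g(A_1)$ respectively. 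Stein's interpolation (Hadamard's three-lines theorem) then yields a log-multiplicative bound, and taking the supremum over $W$ together with an AM-GM / Hirschman step converts this into the desired additive concavity.

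The principal obstacle I expect is the nonlinearity of $A\mapsto\MM^{(k)}_0(A)$: since $\MM^{(k)}_0(A_\tau)\ne(1-\tau)\MM^{(k)}_0(A_0)+\tau\MM^{(k)}_0(A_1)$, the interpolant $F_W(z)$ must reproduce the genuine fractional power $A_\tau^{rs}$ at $z=\tau$ rather than any linear combination of lifted endpoint data. This forces the complex powers to be taken on $\mathbb{C}^n$ before lifting to the exterior algebra, a Lie-Trotter-like construction that must match the Schatten-exponent interpolation $1/p_\theta=(1-\theta)/p_0+\theta/p_1$ with the desired endpoint bounds. The remaining, more mechanical task is converting Hadamard's log-multiplicative estimate into the linear concavity bound, for which the variational supremum over $W$ is the essential bridge: it saturates H\"older's inequality on $\wedge^k(\mathbb{C}^n)$ and converts the multiplicative three-lines estimate into an additive concavity statement via AM-GM. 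Standard verification of analyticity and sub-exponential growth of $F_W$ on the strip follows from the strict positivity of $A_0,A_1$.
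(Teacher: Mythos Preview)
Your plan has a genuine gap at exactly the point you flag as ``the principal obstacle,'' and the resolution you sketch does not work. You propose to build a holomorphic $F_W(z)$ from complex powers of $A_0$ and $A_1$ so that a Schatten norm of $F_W(\tau)$ reproduces the quantity $\trace\bigl[\MM^{(k)}_0(K^*A_\tau^{rs}K)\,W\bigr]$ at the interior point $z=\tau$. But no holomorphic expression in $A_0^{z},A_1^{z}$ (or their conjugates by $K$, $W^{1/2}$) will produce the \emph{additive} convex combination $A_\tau=(1-\tau)A_0+\tau A_1$, let alone its fractional power $A_\tau^{rs}$, unless $A_0$ and $A_1$ commute. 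A three-lines argument interpolating between $A_0$ and $A_1$ naturally yields multiplicative geometric-mean-type quantities such as $A_0^{(1-\tau)\alpha}A_1^{\tau\alpha}$ at the interior, never $A_\tau^{\alpha}$. A Lie--Trotter limit does not help here: it would only recover $\exp\bigl((1-\tau)\log A_0+\tau\log A_1\bigr)$, which is again not $A_\tau$. Your variational rewriting via Schatten duality on $\wedge^k(\mathbb{C}^n)$ is correct but does not circumvent this; for each fixed $W$ you still face the same obstruction.

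The paper's proof avoids this entirely by interpolating in a \emph{different} variable. It fixes $C:=A_\tau$ from the outset and, for each endpoint $X\in\{A_0,A_1\}$, sets $M=C^{rs/2}K=Q|M|$ and defines
\[
G_X(z)=X^{rz/2}\,C^{-rz/2}\,Q\,|M|^{z/s},\qquad z\in\mathcal{S},
\]
so that $\phi\bigl((K^*X^{rs}K)^{1/s}\bigr)=\phi\bigl(|G_X(s)|^{2/s}\bigr)$. The interpolation is then applied with $\theta=s$, $p_0=\infty$, $p_1=2$: on the line $\Re z=0$ the matrix $G_X(it)$ is unitary (contributing nothing), while on $\Re z=1$ cyclicity of $\phi$ reduces $\phi(|G_X(1+it)|^2)$ to $\phi\bigl(|M|^{1/s}Q^*C^{-r(1-it)/2}X^{r}C^{-r(1+it)/2}Q|M|^{1/s}\bigr)$, which is \emph{affine in $X^r$}. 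At that point the operator concavity of $X\mapsto X^r$ together with the monotonicity and concavity of $\phi$ give the required inequality directly. The crucial conceptual difference is that the holomorphic family is parametrized by the exponent, with the target convex combination $C$ built into $G_X$, rather than by a path between the two matrices.
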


\begin{thm}[Generalized Lieb's Concavity Theorem]\label{thm:GeneralLiebConcavity}
For any $p,q\in[0,1],s\in[0,\frac{1}{p+q}]$, and any $K\in \mathbb{C}^{n\times m}$, the function 
\begin{equation}
(A,B) \ \longmapsto\ \phi\big((B^\frac{q}{2}K^*A^pKB^\frac{q}{2})^s\big) 
\label{eqt:function2}
\end{equation}
is jointly concave on $\mathbf{H}_n^+\times\mathbf{H}_m^+$. 
\end{thm}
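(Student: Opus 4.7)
The plan is to derive Theorem~\ref{thm:GeneralLiebConcavity} from Lemma~\ref{lem:GeneralEpstein} by composition with a jointly operator concave block-diagonal map, invoking the principle that a concave-and-monotone function composed with a jointly operator concave map is jointly concave. By continuity I would first reduce to $A \succ 0$ and $B \succ 0$.

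Set $r = p + q \in (0, 1]$ and define the block-diagonal map $\Psi \colon (A, B) \mapsto \diag(A^{p/r}, B^{q/r}) \in \mathbf{H}_{n+m}^{++}$. Since $p/r, q/r \in (0, 1]$, the L\"owner--Heinz theorem makes each fractional power operator concave, so $\Psi$ is jointly operator concave in $(A, B)$, and $\Psi(A, B)^{rs} = \diag(A^{ps}, B^{qs})$. For a fixed $\tilde K$, Lemma~\ref{lem:GeneralEpstein} (applied in dimension $n + m$, extended to rectangular $\tilde K$ if needed) gives that $G(\tilde A) := \phi\bigl((\tilde K^* \tilde A^{rs} \tilde K)^{1/s}\bigr)$ is concave in $\tilde A \in \mathbf{H}_{n+m}^+$. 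Moreover $G$ is monotone in the L\"owner order: $\tilde A_1 \succeq \tilde A_2 \succeq 0$ forces $\tilde K^* \tilde A_1^{rs} \tilde K \succeq \tilde K^* \tilde A_2^{rs} \tilde K$ (using $rs \le 1$ in L\"owner--Heinz), the $1/s$-powers then have eigenvalues dominating componentwise, and $\phi$'s monotonicity from Proposition~\ref{prop:ktrace}(iii) applies. The composition principle then yields that $G \circ \Psi$ is jointly concave in $(A, B)$.

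The principal obstacle is identifying a fixed $\tilde K$ for which $G(\Psi(A, B))$ coincides with the target $f(A, B) := \phi\bigl((B^{qs/2} K^* A^{ps} K B^{qs/2})^{1/s}\bigr)$. The naive choice $\tilde K = \begin{pmatrix} X \\ Y \end{pmatrix}$ produces $\tilde K^* \Psi(A, B)^{rs} \tilde K = X^* A^{ps} X + Y^* B^{qs} Y$, a sum rather than the sandwich product $B^{qs/2} K^* A^{ps} K B^{qs/2}$ that we want, and since $A, B$ vary independently, no constant $X, Y$ can reproduce the required cross structure. A subtler construction is therefore needed, such as enlarging $\tilde K$ and extracting the target as a principal submatrix using the consistency property (Proposition~\ref{prop:ktrace}(vi)); failing this, the fallback is to mimic the proof of Lemma~\ref{lem:GeneralEpstein} directly in two variables by applying Stein complex interpolation to a holomorphic family such as $T(z) = A_0^{ps(1-z)/2} A_1^{psz/2} K B_0^{qs(1-z)/2} B_1^{qsz/2}$, reading off endpoint values through the exterior-algebra identity $\phi((M^* M)^{1/s}) = \|\mathcal{M}_0^{(k)}(M)\|_{2/s}^{2/(sk)}$, and exploiting the degree-$r$ homogeneity of $f$ (with $r \le 1$) together with monotonicity of $f$ to lift log-concavity along a geometric path to the desired arithmetic joint concavity.
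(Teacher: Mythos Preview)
The obstacle you identified in your first approach is genuine and cannot be repaired by enlarging $\tilde K$ or appealing to consistency. For any fixed $\tilde K=\begin{pmatrix}X\\Y\end{pmatrix}$ one has $\tilde K^*\diag(A^{ps},B^{qs})\tilde K = X^*A^{ps}X + Y^*B^{qs}Y$, an expression that is \emph{additive} in the two blocks; no choice of constant $X,Y$ (or larger target dimension) can produce the \emph{multiplicative} sandwich $B^{qs/2}K^*A^{ps}KB^{qs/2}$ with $A,B$ varying independently. The map $\tilde A\mapsto \tilde K^*\tilde A^{rs}\tilde K$ is linear in $\tilde A^{rs}$, so the product structure simply cannot appear. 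Your fallback is too schematic to be a proof: the family $T(z)=A_0^{ps(1-z)/2}A_1^{psz/2}KB_0^{qs(1-z)/2}B_1^{qsz/2}$ compares $(A_0,B_0)$ to $(A_1,B_1)$ along a noncommutative ``geometric path'', and there is no clean mechanism for converting log-concavity along such a path into arithmetic concavity here; the homogeneity degree is $p+q\leq 1$, but the relevant ``geometric mean'' of positive matrices is not the arithmetic midpoint, and monotonicity does not bridge the gap.

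The paper resolves the difficulty by a different block trick that keeps the product structure intact: set $\widehat A=\diag(B,A)\in\mathbf H_{m+n}^+$ and $\widehat K=\begin{pmatrix}\bm 0&\bm 0\\ K&\bm 0\end{pmatrix}$, so that
\[
(\widehat A^{qs/2}\widehat K^*\widehat A^{ps}\widehat K\widehat A^{qs/2})^{1/s}
=\begin{pmatrix}(B^{qs/2}K^*A^{ps}KB^{qs/2})^{1/s}&\bm 0\\ \bm 0&\bm 0\end{pmatrix},
\]
and consistency reduces the two-variable problem to the \emph{single}-variable concavity of $\widehat A\mapsto\phi\big((\widehat A^{qs/2}\widehat K^*\widehat A^{ps}\widehat K\widehat A^{qs/2})^{1/s}\big)$. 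The point is that the \emph{same} matrix $\widehat A$ now sits in all three slots, with two different powers $ps$ and $qs$; the off-diagonal $\widehat K$ is what generates the cross term. This one-variable statement is then proved by Stein interpolation on $\theta=p/r$ (not on $s$): with $C=\tau A+(1-\tau)B$, $M=C^{ps/2}KC^{qs/2}$, and
\[
G_X(z)=X^{rsz/2}C^{-rsz/2}\,M\,C^{-rs(1-z)/2}X^{rs(1-z)/2},
\]
one has $\phi\big(|G_X(p/r)|^{2/s}\big)=\phi\big((X^{qs/2}K^*X^{ps}KX^{qs/2})^{1/s}\big)$, and at both boundary lines $\mathrm{Re}(z)=0$ and $\mathrm{Re}(z)=1$ the quantity $\phi\big(|G_X(\cdot)|^{2/s}\big)$ becomes exactly $\phi\big((K'^*X^{rs}K')^{1/s}\big)$ for suitable $K'$, so Lemma~\ref{lem:GeneralEpstein} applies at each endpoint. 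Your plan to invoke Lemma~\ref{lem:GeneralEpstein} via composition with an operator-concave $\Psi$ misses this reduction; the lemma enters not as an outer monotone-concave function but as the tool controlling both boundary values in an interpolation on the exponent split $p/(p+q)$.
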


\begin{thm}\label{thm:GeneralLieb}
For any $H\in \mathbf{H}_n$ and any $\{p_j\}_{j=1}^m\subset[0,1]$ such that $\sum_{j=1}^mp_j\leq1$, the function 
\begin{equation}
(A^{(1)},A^{(2)},\dots,A^{(m)}) \ \longmapsto\ \phi\big(\exp\big(H+\sum_{j=1}^mp_j\log A^{(j)}\big)\big)
\label{eqt:function3}
\end{equation}
is jointly concave on $(\mathbf{H}_n^{++})^{\times m}$. In particular, $A\mapsto\phi\big(\exp(H+\log A)\big)$ is concave on $\mathbf{H}_n^{++}$.
\end{thm}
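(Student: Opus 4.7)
My plan is to obtain Theorem~\ref{thm:GeneralLieb} as a limiting case of Theorem~\ref{thm:GeneralLiebConcavity} and Lemma~\ref{lem:GeneralEpstein}. The key analytic input is the symmetric Lie--Trotter / Baker--Campbell--Hausdorff expansion: for any Hermitian matrices $X_0,X_1,\dots,X_\ell$,
\[
\bigl(e^{sX_\ell/2}\cdots e^{sX_1/2}\,e^{sX_0}\,e^{sX_1/2}\cdots e^{sX_\ell/2}\bigr)^{1/s}
\ \xrightarrow[s\to 0^+]{}\ \exp\!\Bigl(X_0+\sum_{j=1}^{\ell}X_j\Bigr),
\]
uniformly on compact sets. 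Since $\phi$ is continuous and pointwise limits of finite-valued concave functions are concave, it suffices to exhibit, for each $s\in(0,1]$, a jointly concave $s$-approximation whose $s\to 0^+$ limit is the map in \eqref{eqt:function3}.

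For $m=1$ with any $p_1\in(0,1]$, I would apply Lemma~\ref{lem:GeneralEpstein} with $r=p_1$ and the $s$-dependent choice $K_s:=e^{sH/2}$: for each $s$, the map $A\mapsto\phi\bigl((e^{sH/2}A^{p_1 s}e^{sH/2})^{1/s}\bigr)$ is concave on $\mathbf{H}_n^{+}$, and the Lie--Trotter identity above (with $X_0=H$, $X_1=p_1\log A$) identifies the $s\to 0^+$ limit as $\phi(\exp(H+p_1\log A))$, specialising at $p_1=1$ to the highlighted consequence. For $m=2$ with weights $p=p_1$, $q=p_2$, the analogous argument uses Theorem~\ref{thm:GeneralLiebConcavity} with the same $K_s=e^{sH/2}$; the five-term symmetric product $(A^{(2)})^{p_2 s/2}\,e^{sH/2}\,(A^{(1)})^{p_1 s}\,e^{sH/2}\,(A^{(2)})^{p_2 s/2}$ inside $\phi$, raised to $1/s$, converges to $\exp(H+p_1\log A^{(1)}+p_2\log A^{(2)})$.

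For general $m\geq 3$, which is the main obstacle, Theorem~\ref{thm:GeneralLiebConcavity} does not directly apply because only two matrix variables are present. I would first establish a multi-variable analogue: for any $s,p_1,\dots,p_m\in(0,1]$ with $\sum_j p_j\leq 1$ and any $K\in\mathbb{C}^{n\times n}$,
\[
(A^{(1)},\dots,A^{(m)})\ \longmapsto\ \phi\!\Bigl(\bigl((A^{(m)})^{\frac{p_m s}{2}}\!\cdots(A^{(2)})^{\frac{p_2 s}{2}}\,K^*(A^{(1)})^{p_1 s}K\,(A^{(2)})^{\frac{p_2 s}{2}}\!\cdots(A^{(m)})^{\frac{p_m s}{2}}\bigr)^{1/s}\Bigr)
\]
is jointly concave on $(\mathbf{H}_n^{+})^{\times m}$. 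The Stein complex interpolation argument that drives Theorem~\ref{thm:GeneralLiebConcavity} rests on Proposition~\ref{prop:ktrace}(v) (H\"older's inequality for $\phi$), which accommodates an arbitrary number of factors; I therefore expect the same argument to go through with $m$ interpolating variables, at the cost of more careful bookkeeping of the boundary data at the edges of the holomorphic strip. With this extension in hand, choosing $K_s=e^{sH/2}$ and invoking the multi-term symmetric Lie--Trotter limit completes the proof of Theorem~\ref{thm:GeneralLieb} in full generality.
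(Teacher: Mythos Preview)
Your argument for $m=1$ is correct and coincides with the paper's Part~I: apply Lemma~\ref{lem:GeneralEpstein} with $K_s=e^{sH/2}$ and take the Lie--Trotter limit $s\to 0^+$. Your $m=2$ case, using Theorem~\ref{thm:GeneralLiebConcavity} in the same way, is also valid.

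The gap is at $m\geq 3$. The multi-variable nested concavity you propose is \emph{not} a result of the paper, and it does not follow from Theorem~\ref{thm:GeneralLiebConcavity} by the same mechanism with ``more bookkeeping.'' Recall how Theorem~\ref{thm:GeneralLiebConcavity} is proved: a block-diagonal trick reduces the two-variable joint concavity to concavity of the single-variable map $A\mapsto\phi\bigl((A^{qs/2}K^*A^{ps}KA^{qs/2})^{1/s}\bigr)$, and \emph{then} a one-parameter Stein interpolation reduces this to Lemma~\ref{lem:GeneralEpstein} at the two edges of the strip. For $m\geq 3$ the block trick collapses, because $\widehat A^{p_ms/2}\cdots\widehat A^{p_2s/2}=\widehat A^{(p_2+\cdots+p_m)s/2}$ cannot separate three or more independent variables; you are back to the $m=2$ single-variable problem, not a new one. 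A direct interpolation on the $m$-variable form would require, at each boundary, an $(m-1)$-variable concavity statement with a \emph{fixed} $K$ independent of the remaining variables---setting this up is a genuine inductive argument, not bookkeeping, and you have not supplied it. H\"older's inequality for $\phi$ accommodates many factors, but that is not where the difficulty lies.

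The paper bypasses this entirely. For $m\geq 2$ it first proves a $k$-trace Peierls--Bogoliubov inequality (via a $k$-trace Araki--Lieb--Thirring and Golden--Thompson chain), giving that $X\mapsto\phi(\exp X)$ is \emph{convex} on $\mathbf{H}_n$. Writing $L=H+\sum_j p_j\log C^{(j)}$ with $C^{(j)}=\tau A^{(j)}+(1-\tau)B^{(j)}$ and $r=\sum_j p_j$, convexity yields
\[
\phi\Bigl(\exp\bigl(H+\textstyle\sum_j p_j\log X^{(j)}\bigr)\Bigr)\;\leq\;\sum_{j=1}^{m}\frac{p_j}{r}\,\phi\bigl(\exp(L+r\log X^{(j)}-r\log C^{(j)})\bigr),
\]
and each summand on the right is handled by the already-established $m=1$ case. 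This convexity/concavity combination is the missing idea in your proposal.
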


\Cref{lem:GeneralEpstein} is a $k$-trace extension of the concave part of Lemma 2.8 in \cite{Carlen2008} (see also \cite[Theorem 4.1]{hiai2013concavity}). The latter is a consequence of Lieb's original concavity theorem. However, we will first apply the technique of operator interpolation to prove \Cref{lem:GeneralEpstein} independently, and then use it to derive \Cref{thm:GeneralLiebConcavity} and the other results. In fact, the convexity/concavity of function \eqref{eqt:function1} in the trace case with different ranges of $p,s$ has been used as the first step towards many consequential results on more complicated trace functions. \Cref{thm:GeneralLiebConcavity} is our generalized Lieb's concavity theorem, which extends Hiai's Theorem 2.1 in \cite{hiai2016concavity} (see also \cite[Theorem 4.4]{carlen2016some}) from trace to $k$-trace. Note that, as stated in Hiai's Theorem, the concavity also holds for $-1\leq p,q\leq 0,\frac{1}{p+q}\leq s\leq0$. In fact,  by consistency and continuity of $\phi$, it suffices to consider the case when $n=m$ and $A,B,K$ are invertible. Then, following the discussions in \cite{carlen2018inequalities}, we have that 
\[\phi\big((B^\frac{q}{2}K^*A^pKB^\frac{q}{2})^s\big) = \phi\big((B^{-\frac{q}{2}}K^{-1}A^{-p}(K^{-1})^*B^{-\frac{q}{2}})^{-s}\big),\]
which concludes the concavity for the mirrored range of $p,q,s$. Our derivation from \Cref{lem:GeneralEpstein} to \Cref{thm:GeneralLiebConcavity} will be a counterpart of Zhang's simple and useful variational argument in \cite{ZHANG2020107053} for the trace case. 

\Cref{thm:GeneralLieb} is a generalization of both Corollary 6.1 in \cite{LIEB1973267} (from trace to $k$-trace) and Theorem 2.1 in \cite{HUANG2019419} (from univariate to multivariate). Lieb \cite{LIEB1973267} proved the original trace version by checking the non-positiveness of the second order directional derivatives (or Hessians). Huang \cite{HUANG2019419} imitated Lieb's derivative arguments and proved the concavity of $A\mapsto\phi\big(\exp(H+\log A)\big)$, which he then used to derive concentrations of partial spectral sums of random matrices. We will first prove \Cref{thm:GeneralLieb} for $m=1$ by applying the Lie product formula to \Cref{lem:GeneralEpstein} (taking $p,q\rightarrow 0,s\rightarrow +\infty$), and hence providing an alternative proof of the concavity of $A\mapsto\phi\big(\exp(H+\log A)\big)$. We then improve the result from $m=1$ to $m\geq 1$ using a $k$-trace version of the Araki--Lieb--Thirring inequality (\Cref{lem:ktraceALT}). All proofs of our main results will be presented in \Cref{subsec:Proofs}.

Apart from the above theorems, we will also prove (i) a $k$-trace version of the multivariate extension of the Golden--Thompson inequality, and (ii) the monotonicity preserving and concavity preserving properties of $k$-trace. \Cref{sec:OtherResults} will be devoted to these two results.  

\subsection{Operator Interpolation}

One of our main tools is Stein's interpolation of linear operators \cite{stein1956interpolation}, that was developed from Hirschman's stronger version of the Hadamard three-line theorem \cite{hirschman1952convexity}. This technique was recently adopted by Sutter et al. \cite{sutter2017multivariate} to establish a multivariate extension of the Golden--Thompson inequality, which inspired our use of interpolation in proving the generalized Lieb's concavity theorem. We will follow the notations in \cite{sutter2017multivariate}. For any $\theta\in(0,1)$, we define a density $\beta_\theta(t)$ on $\mathbb{R}$ by 
\begin{equation} 
\beta_\theta(t) = \frac{\sin(\pi\theta)}{2\theta\big(\cosh(\pi t)+\cos(\pi\theta)\big)},\quad t\in\mathbb{R}.
\end{equation} 
Specially, we define 
\[\beta_0(t) = \lim_{\theta\searrow0}\beta_\theta(t) = \frac{\pi}{2(\cosh(\pi t)+1)},\quad \text{and}\quad \beta_1(t) = \lim_{\theta\nearrow1}\beta_\theta(t) = \delta(t).\]
$\beta_\theta(t)$ is a density since $\beta_\theta(t)\geq0,t\in\mathbb{R}$ and $\int_{-\infty}^{+\infty}\beta_\theta(t)dt=1$. We will always use $\mathcal{S}$ to denote a vertical strip on the complex plane $\mathbb{C}$:
\begin{equation}\label{eqt:S}
\mathcal{S}=\{z\in \mathbb{C}:0\leq \mathrm{Re}(z)\leq 1\}.
\end{equation}

\begin{thm}[Stein-Hirschman]\label{thm:SHInterpolation}
Let $G(z)$ be a map from $\mathcal{S}$ to bounded linear operators on a separable Hilbert space that is holomorphic in the interior of $\mathcal{S}$ and continuous on the boundary. Let $p_0,p_1\in[1,+\infty],\theta\in[0,1]$, and define $p_\theta$ by 
\[\frac{1}{p_\theta}=\frac{1-\theta}{p_0}+\frac{\theta}{p_1}.\]
Then if $\|G(z)\|_{p_{\mathrm{Re}(z)}}$ is uniformly bounded on $\mathcal{S}$, the following inequality holds:
\begin{equation}\label{eqt:SHInterpolation}
\log\|G(\theta)\|_{p_\theta} \leq  \int_{-\infty}^{+\infty}dt\Big(\beta_{1-\theta}(t)\log\|G(it)\|_{p_0}^{1-\theta}+\beta_\theta(t)\log\|G(1+it)\|_{p_1}^\theta\Big).
\end{equation}
\end{thm}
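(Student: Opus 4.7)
The plan is to reduce the operator statement to a scalar Hadamard--Hirschman three-lines inequality via a Schatten-duality construction, following the classical Stein template. Throughout, $p_\theta'$ denotes the H\"older conjugate of $p_\theta$; a short calculation gives $\tfrac{1}{p_\theta'} = \tfrac{1-\theta}{p_0'} + \tfrac{\theta}{p_1'}$, i.e., $p_\theta'$ satisfies the same interpolation identity as $p_\theta$ between dual exponents, which is the algebraic fact that makes the whole construction fit together.

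The first step is the scalar version: for any bounded $f:\mathcal{S}\to\mathbb{C}$ holomorphic on the interior of $\mathcal{S}$ and continuous on $\mathcal{S}$,
\begin{equation*}
\log|f(\theta)| \ \leq\ \int_{-\infty}^{+\infty}\beta_{1-\theta}(t)\log|f(it)|\,dt + \int_{-\infty}^{+\infty}\beta_\theta(t)\log|f(1+it)|\,dt.
\end{equation*}
This is Hirschman's refinement of the three-lines theorem. The route is to note that $\log|f|$ is subharmonic on $\mathcal{S}$ and invoke the Poisson integral representation for harmonic functions on a vertical strip; one checks by direct computation that the Poisson kernels for the boundary lines $\{\mathrm{Re}(z)=0\}$ and $\{\mathrm{Re}(z)=1\}$ evaluated at $z=\theta$ are exactly $\beta_{1-\theta}(t)$ and $\beta_\theta(t)$.

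Next I would upgrade from scalars to Schatten norms by duality. At the target point $\theta$, pick $Y$ with $\|Y\|_{p_\theta'}=1$ attaining $\|G(\theta)\|_{p_\theta} = |\trace(G(\theta)Y)|$, take the polar decomposition $Y = U|Y|$, and define
\begin{equation*}
X(z)\ =\ U\,|Y|^{\,p_\theta'\bigl(\tfrac{1-z}{p_0'}+\tfrac{z}{p_1'}\bigr)}.
\end{equation*}
Using the interpolation identity for $p_\theta'$, one verifies $X(\theta)=Y$ and, exploiting that purely imaginary powers of $|Y|$ are unitary, $\|X(it)\|_{p_0'}=\|X(1+it)\|_{p_1'}=1$; also $X$ is bounded and holomorphic on $\mathcal{S}$. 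Set $F(z) = \trace(G(z)X(z))$, a bounded holomorphic scalar function on $\mathcal{S}$. H\"older for Schatten norms gives $|F(it)|\leq \|G(it)\|_{p_0}$ and $|F(1+it)|\leq \|G(1+it)\|_{p_1}$, while $|F(\theta)|=\|G(\theta)\|_{p_\theta}$. Substituting these bounds into the scalar Hirschman inequality and exponentiating yields exactly \eqref{eqt:SHInterpolation}.

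The main technical obstacle is the scalar Hirschman inequality in the generality required here: $\log|F|$ can equal $-\infty$ on sets of positive measure, and $|F|$ need not decay as $|\mathrm{Im}(z)|\to\infty$ inside $\mathcal{S}$. The remedy is standard: multiply $F$ by a regularizer $e^{\varepsilon z(z-1)}$ to force decay along vertical lines, truncate $\log|F|$ from below by $\max(\log|F|,-N)$, apply the Poisson representation to the resulting honest subharmonic function, and then pass to the limit $\varepsilon\downarrow 0$ and $N\to\infty$. The uniform boundedness hypothesis on $\|G(z)\|_{p_{\mathrm{Re}(z)}}$ is precisely what makes the H\"older bounds on $F$ hold uniformly on vertical strips and allows this limiting argument to close.
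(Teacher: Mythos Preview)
The paper does not actually prove \Cref{thm:SHInterpolation}; it is quoted as a known result of Stein (built on Hirschman's scalar three-lines theorem) and attributed to the original references. Appendix~C merely restates the scalar Hirschman inequality and then says ``Stein further generalized this complex interpolation theory to interpolation of linear operators as described in \Cref{thm:SHInterpolation}'' without giving an argument. So there is no in-paper proof to compare your proposal against.

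That said, your sketch is the classical Stein argument and is essentially correct: dualize the Schatten $p_\theta$-norm at the target point, analytically extend the dual element across the strip so that its $p_0'$- and $p_1'$-norms on the two boundary lines are identically one, form the scalar function $F(z)=\trace\bigl(G(z)X(z)\bigr)$, and feed it into the scalar Hirschman inequality. The regularization by $e^{\varepsilon z(z-1)}$ and truncation of $\log|F|$ from below is the standard way to handle lack of decay and zeros of $F$, and the uniform-boundedness hypothesis is indeed exactly what makes the limiting argument close. One point worth flagging: the construction $X(z)=U|Y|^{\,p_\theta'((1-z)/p_0'+z/p_1')}$ degenerates formally when $p_\theta'=\infty$ (i.e.\ $p_\theta=1$) or when one of $p_0',p_1'$ is infinite; those endpoint cases need either a direct (easier) argument or a limiting step, which is routine but should be acknowledged if you intend this as a full proof.
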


A $k$-trace analog of the above theorem, that is more convenient for our use, is as follows, 

\begin{lemma}\label{lem:KeyLemma}
Let $G(z):\mathcal{S}\rightarrow\mathbb{C}^{n\times n}$ be holomorphic in the interior of $\mathcal{S}$ and continuous on the boundary. Let $p_0,p_1\in [1,+\infty]$, $\theta\in[0,1]$, and define $p_\theta$ by 
\[\frac{1}{p_\theta}=\frac{1-\theta}{p_0}+\frac{\theta}{p_1}.\]
Then if $\|G(z)\|$ is uniformly bounded on $\mathcal{S}$, the following inequality holds:
\begin{align}\label{eqt:KeyIneq}
&\ \log\big[\phi(|G(\theta)|^{p_\theta})^\frac{1}{p_\theta}\big] \nonumber \\
\leq&\  \int_{-\infty}^{+\infty}dt\Big(\beta_{1-\theta}(t)\log\big[\phi\big(|G(it)|^{p_0}\big)^\frac{1-\theta}{p_0}\big]+\beta_\theta(t)\log\big[\phi\big(|G(1+it)|^{p_1}\big)^\frac{\theta}{p_1}\big]\Big).
\end{align}
\end{lemma}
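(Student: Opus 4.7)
My plan is to lift $G(z)$ to an operator-valued holomorphic map on the $k$-th exterior algebra $\wedge^k(\mathbb{C}^n)$, where the $k$-trace reduces to an ordinary trace, and then invoke Stein-Hirschman directly. First I would set
\[\tilde G(z) = \MM_0^{(k)}(G(z)) \in \mathcal{L}(\wedge^k(\mathbb{C}^n)), \qquad z\in\mathcal{S}.\]
Since the entries of $\MM_0^{(k)}(A)$ are polynomials (in fact $k\times k$ minors) in the entries of $A$, the map $z\mapsto \tilde G(z)$ inherits holomorphy in the interior of $\mathcal{S}$ and continuity on $\mathcal{S}$ from $G$. I would then verify uniform boundedness of $\|\tilde G(z)\|_{p_{\mathrm{Re}(z)}}$ on $\mathcal{S}$ by combining $\|\tilde G(z)\|\leq \|G(z)\|^k$ with $\|\tilde G(z)\|_p \leq \binom{n}{k}^{1/p}\|\tilde G(z)\|$ for $p\in[1,+\infty]$ and the hypothesis that $\|G(z)\|$ is uniformly bounded.

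The second step is to rewrite $\phi(|G(z)|^p)^{1/p}$ as a Schatten $p$-norm of $\tilde G(z)$. Using $\MM_0^{(k)}(AB)=\MM_0^{(k)}(A)\MM_0^{(k)}(B)$ and $\MM_0^{(k)}(A^*)=\MM_0^{(k)}(A)^*$ from \Cref{Apdix:ExteriorAlgebra}, one obtains $|\tilde G(z)|^p = \MM_0^{(k)}(|G(z)|^p)$, and hence $\|\tilde G(z)\|_p^p = \trace[\MM_0^{(k)}(|G(z)|^p)]=\trace_k[|G(z)|^p]$. This produces the key identity
\[\phi(|G(z)|^p)^{\frac{1}{p}} = \trace_k[|G(z)|^p]^{\frac{1}{kp}} = \|\tilde G(z)\|_p^{\frac{1}{k}}\]
for every $p\in[1,+\infty]$ and every $z\in\mathcal{S}$.

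In the last step I would apply \Cref{thm:SHInterpolation} to $\tilde G(z)$, which is a bounded-operator-valued holomorphic map on the finite-dimensional (hence separable) Hilbert space $\wedge^k(\mathbb{C}^n)$. This yields
\[\log\|\tilde G(\theta)\|_{p_\theta} \leq \int_{-\infty}^{+\infty}dt\Big(\beta_{1-\theta}(t)\log\|\tilde G(it)\|_{p_0}^{1-\theta} + \beta_\theta(t)\log\|\tilde G(1+it)\|_{p_1}^{\theta}\Big).\]
Dividing both sides by $k$ and applying the key identity term by term converts each $\|\tilde G(\cdot)\|_p^{1/k}$ back into $\phi(|G(\cdot)|^p)^{1/p}$, which is exactly \eqref{eqt:KeyIneq}.

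The main obstacle is not in the interpolation step, which is simply invoked from \Cref{thm:SHInterpolation}, but in establishing the identity $|\tilde G(z)|^p = \MM_0^{(k)}(|G(z)|^p)$ for a \emph{general} (non-positive) complex matrix $G(z)$. This relies on $\MM_0^{(k)}$ being a $\ast$-homomorphism from $\mathbb{C}^{n\times n}$ into $\mathcal{L}(\wedge^k(\mathbb{C}^n))$ that also commutes with continuous functional calculus on positive operators, so that taking the square root of $G(z)^*G(z)$ and then an arbitrary fractional power is preserved. Once that algebraic and functional-calculus fact is extracted from \Cref{Apdix:ExteriorAlgebra}, the rest of the argument is a clean translation from the ordinary-trace setting of Stein-Hirschman to the $k$-trace setting.
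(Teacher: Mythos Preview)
Your proposal is correct and is essentially identical to the paper's own proof: the paper too sets $\tilde G(z)=\MM_0^{(k)}(G(z))$, verifies the identity $\trace_k[|X|^p]^{1/p}=\|\MM_0^{(k)}(X)\|_p$ via $\MM_0^{(k)}(|X|^p)=|\MM_0^{(k)}(X)|^p$, applies \Cref{thm:SHInterpolation} to $\tilde G$, and divides by $k$. The ``obstacle'' you single out is exactly the computation the paper carries out first, using the product, adjoint, and power properties of $\MM_0^{(k)}$ from \Cref{Apdix:ExteriorAlgebra}.
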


More discussions on operator interpolation and the proof of \Cref{lem:KeyLemma} will be presented in \Cref{Apdix:ComplexInterpolation}. For $p_0,p_1\in[1,+\infty)$, we can rewrite inequality \eqref{eqt:KeyIneq} as 
\begin{align}\label{eqt:KeyIneq2}
&\ \log\phi(|G(\theta)|^{p_\theta}) \nonumber\\ 
\leq&\   \int_{-\infty}^{+\infty}dt\Big(\frac{(1-\theta)p_\theta}{p_0}\beta_{1-\theta}(t)\log\phi\big(|G(it)|^{p_0}\big)+\frac{\theta p_\theta}{p_1}\beta_\theta(t)\log\phi\big(|G(1+it)|^{p_1}\big)\Big)
\end{align}
Notice that 
\[\int_{-\infty}^{+\infty}\Big(\frac{(1-\theta)p_\theta}{p_0}\beta_{1-\theta}(t)+\frac{\theta p_\theta}{p_1}\beta_\theta(t)\Big)dt = 1.\]
Then using Jensen's inequality on the concavity of logarithm, we can immediately conclude from \eqref{eqt:KeyIneq2} that for $p_0,p_1\in[1,+\infty)$,
\begin{equation}\label{eqt:KeyIneq3}
\phi(|G(\theta)|^{p_\theta})\leq \int_{-\infty}^{+\infty}dt\Big(\frac{(1-\theta)p_\theta}{p_0}\beta_{1-\theta}(t)\phi\big(|G(it)|^{p_0}\big)+\frac{\theta p_\theta}{p_1}\beta_\theta(t)\phi\big(|G(1+it)|^{p_1}\big)\Big),
\end{equation}
under the same setting as in \Cref{lem:KeyLemma}.

\subsection{Proof of Main Results}
\label{subsec:Proofs}

The key of applying \Cref{lem:KeyLemma} is to choose some proper holomorphic function $G(z)$ and then interpolating on some power in $[0,1]$. In particular, we will perform interpolation on $w=\frac{1}{s}$ to prove \Cref{lem:GeneralEpstein}. Our choice of the holomorphic functions $G(z)$ in the following proof is inspired by Lieb's constructions in \cite{LIEB1973267} for the use of maximum modulus principle.

\begin{proof}[\rm\textbf{Proof of \Cref{lem:GeneralEpstein}}] Note that for $s\in [0,1]$, the concavity of \eqref{eqt:function1} is a direct consequence of the facts that (i) $\phi$ is monotone increasing and concave on $\mathbf{H}_n^{+}$, and (ii) $X\mapsto X^r$ and $X\mapsto X^s$ are operator monotone increasing and operator concave on $\mathbf{H}_n^{+}$. So in what follows we may assume that $1\leq s\leq \frac{1}{r}$. We need to show that, for any $A,B\in \mathbf{H}_n^+$ and any $\tau\in[0,1]$, 
\[\tau\phi\big((K^*A^rK)^s\big)  +(1-\tau)\phi\big((K^*B^rK)^s\big)  \leq \phi\big((K^*C^rK)^s\big)  ,\]
where $C=\tau A+(1-\tau)B$. We may assume that $A,B\in \mathbf{H}_n^{++}$ and $K$ is invertible. Once this is done, the general result for $A,B\in \mathbf{H}_n^+$ and $K\in\mathbb{C}^{n\times n}$ can be obtained by continuity. Let $w=\frac{1}{s}\in[r,1]$ and $\hat{r} = rs\in[0,1]$, so $r = \hat{r}w$. Let $M = C^\frac{r}{2}K$, and let $M=Q|M|$ be the polar decomposition of $M$ for some unitary matrix $Q$. Since $C,K$ are both invertible, $|M|\in\mathbf{H}_n^{++}$. We then define two functions from $\mathcal{S}$ to $\mathbb{C}^{n\times n}$:
\[G_A(z) = A^\frac{\hat{r}z}{2}C^{-\frac{\hat{r}z}{2}}Q|M|^\frac{z}{w},\quad G_B(z) = B^\frac{\hat{r}z}{2}C^{-\frac{\hat{r}z}{2}}Q|M|^\frac{z}{w},\quad z\in\mathcal{S},\]
where $\mathcal{S}$ is given by \eqref{eqt:S}. In what follows we will use $X$ for $A$ or $B$. We then have
\begin{align*}
\phi\big((K^*X^rK)^s\big) =&\  \phi\big((M^*C^{-\frac{r}{2}}X^rC^{-\frac{r}{2}}M)^s\big)\\
=&\ \phi\big((|M|Q^*C^{-\frac{\hat{r}w}{2}}X^\frac{\hat{r}w}{2}X^\frac{\hat{r}w}{2}C^{-\frac{\hat{r}w}{2}}Q|M|)^\frac{1}{w}\big)\\
=&\ \phi\big(|G_X(w)|^\frac{2}{w}\big).
\end{align*}
Since $A,B,C,M$ are now fixed matrices in $\mathbf{H}_n^{++}$, $G_A(z)$ and $G_B(z)$ are apparently holomorphic in the interior of $\mathcal{S}$ and continuous on the boundary. Also, it is easy to check that $\|G_A(z)\|$ and $\|G_B(z)\|$ are uniformly bounded on $\mathcal{S}$, since $\mathrm{Re}(z)\in[0,1]$. Therefore we can use inequality \eqref{eqt:KeyIneq3} with $\theta = w,p_\theta = \frac{2}{w}$ to obtain 
\begin{align*}
&\ \phi(|G_X(w)|^\frac{2}{w})\leq \int_{-\infty}^{+\infty}dt\Big(\frac{2(1-w)}{wp_0}\beta_{1-w}(t)\phi\big(|G_X(it)|^{p_0}\big)+\frac{2}{p_1}\beta_w(t)\phi\big(|G_X(1+it)|^{p_1}\big)\Big).
\end{align*}
We still need to choose some $p_0,p_1\geq 1$ satisfying $\frac{1-w}{p_0}+\frac{w}{p_1}=\frac{1}{p_w}=\frac{w}{2}$ to proceed. Note that $G_X(it) = X^\frac{i\hat{r}t}{2}C^{-\frac{i\hat{r}t}{2}}Q|M|^\frac{it}{w}$ are now unitary matrices for all $t\in\mathbb{R}$ since $X,C,|M|\in\mathbf{H}_n^{++}$, and thus $|G_X(it)|^{p_0} = I_n$ for all $p_0$. Therefore we can take $p_0\rightarrow +\infty,p_1=2$ to obtain 
\[\phi(|G_X(w)|^\frac{2}{w})\leq \int_{-\infty}^{+\infty}dt\beta_w(t)\phi\big(|G_X(1+it)|^2\big).\]
Further, for each $t\in\mathbb{R}$, we have 
\begin{align*}
&\ \phi\big(|G_X(1+it)|^2\big)\\
=&\ \phi\big(G_X(1+it)^*G_X(1+it)\big) \\
= &\ \phi\big(|M|^\frac{(1-it)}{w}Q^*C^{-\frac{\hat{r}(1-it)}{2}}X^{\hat{r}}C^{-\frac{\hat{r}(1+it)}{2}}Q|M|^\frac{(1+it)}{w}\big)\\
=&\ \phi\big(|M|^\frac{1}{w}Q^*C^{-\frac{\hat{r}(1-it)}{2}}X^{\hat{r}}C^{-\frac{\hat{r}(1+it)}{2}}Q|M|^\frac{1}{w}\big),
\end{align*}
where we have used the cyclicity of $\phi$ ($|M|^\frac{it}{w}$ is unitary) for the last equality. Therefore we have
\begin{align*}
&\ \tau\phi\big(|G_A(1+it)|^2\big) + (1-\tau)\phi\big(|G_B(1+it)|^2\big) \\
=&\ \tau\phi\big(|M|^\frac{1}{w}Q^*C^{-\frac{\hat{r}(1-it)}{2}}A^{\hat{r}}C^{-\frac{\hat{r}(1+it)}{2}}Q|M|^\frac{1}{w}\big)\\
&\ + (1-\tau)\phi\big(|M|^\frac{1}{w}Q^*C^{-\frac{\hat{r}(1-it)}{2}}B^{\hat{r}}C^{-\frac{\hat{r}(1+it)}{2}}Q|M|^\frac{1}{w}\big)\\
\leq &\ \phi\big(|M|^\frac{1}{w}Q^*C^{-\frac{\hat{r}(1-it)}{2}}(\tau A^{\hat{r}}+(1-\tau) B^{\hat{r}})C^{-\frac{\hat{r}(1+it)}{2}}Q|M|^\frac{1}{w}\big)\\
\leq&\ \phi\big(|M|^\frac{1}{w}Q^*C^{-\frac{\hat{r}(1-it)}{2}}C^{\hat{r}}C^{-\frac{\hat{r}(1+it)}{2}}Q|M|^\frac{1}{w}\big)\\
=&\ \phi\big(|M|^\frac{2}{w}\big)\\
=&\ \phi\big((M^*M)^\frac{1}{w}\big).
\end{align*}
The first inequality above is due to the concavity of $\phi$, the second inequality is due to (i) that $\phi$ is monotone increasing on $\mathbf{H}_n^+$ and (ii) that $X\mapsto X^{\hat{r}}$ is operator concave on $\mathbf{H}_n^+$ for $\hat{r}\in(0,1]$. Finally, since $\phi\big((M^*M)^\frac{1}{w}\big)$ is independent of $t$, and $\beta_w(t)$ is a density on $\mathbb{R}$, we obtain that 
\begin{align*}
&\ \tau\phi\big((K^*A^rK)^s\big)  +(1-\tau)\phi\big((K^*B^rK)^s\big) \\
=&\ \tau\phi\big(|G_A(w)|^\frac{2}{w}\big) + (1-\tau)\phi\big(|G_B(w)|^\frac{2}{w}\big) \\ 
\leq&\ \phi\big((M^*M)^\frac{1}{w}\big)\\
=&\ \phi\big((K^*C^rK)^s\big).
\end{align*}
So we have proved the concavity of \eqref{eqt:function1} on $\mathbf{H}_n^+$.
\end{proof}

Our next proof, using essentially H\"older's inequalities for the $k$-trace, is adapted from Zhang's proofs of Theorem 1.1 and Theorem 3.3 in \cite{ZHANG2020107053}.

\begin{proof}[\rm\textbf{Proof of \Cref{thm:GeneralLiebConcavity}}]
Without loss of generality, we may assume that $m=n$. Otherwise we can replace $A$ by $\left(\begin{array}{cc} A & \bzero \\ \bzero & \bzero \end{array}\right)$ and $K$ by $\left(\begin{array}{c} K \\ \bzero\end{array}\right)$ if $n<m$; or replace $B$ by $\left(\begin{array}{cc} B & \bzero \\ \bzero & \bzero \end{array}\right)$ and $K$ by $\left(\begin{array}{cc} K & \bzero\end{array}\right)$ is $n>m$. By the consistency of $\phi$, these changes of variables will not affect whether the function \eqref{eqt:function2} is jointly concave in $(A,B)$ or not. We write $X=A^\frac{p}{2}$ and $Y = KB^\frac{q}{2}$. Let $s_1 = \frac{p+q}{p}s,s_2 = \frac{p+q}{q}s$, so $\frac{1}{s} = \frac{1}{s_1}+\frac{1}{s_2}$. Then for any $Z\in \mathbb{C}^{n\times n}$ that is invertible, we have by H\"older's inequality ((v) in \Cref{prop:ktrace}) that 
\begin{align*}
\phi\big((B^\frac{q}{2}K^*A^pKB^\frac{q}{2})^s\big) =&\ \phi\big(|XZZ^{-1}Y|^{2s}\big)\\
\leq&\ \phi\big(|XZ|^{2s_1}\big)^\frac{s}{s_1}\phi\big(|Z^{-1}Y|^{2s_2}\big)^\frac{s}{s_2}\\
\leq&\ \frac{s}{s_1}\phi\big((Z^*X^*XZ)^{s_1}\big) + \frac{s}{s_2} \phi\big((Y^*(Z^{-1})^*Z^{-1}Y)^{s_2}\big)\\
=&\ \frac{s}{s_1}\phi\big((Z^*X^*XZ)^{s_1}\big) + \frac{s}{s_2} \phi\big((Z^{-1}YY^*(Z^{-1})^*)^{s_2}\big). 
\end{align*}
We have used the fact that $\phi\big(f(|M|)\big) = \phi\big(f(|M^*|)\big)$ for any matrix $M\in\mathbb{C}^{n\times n}$ and any function $f$, since $\phi$ is only a function of eigenvalues and the spectrums of $f(|M|)$ and $f(|M^*|)$ are the same. Let $(XY)^* = Q|(XY)^*|$ be the polar decomposition of $(XY)^*$, where $Q\in\mathbb{C}^{n\times n}$ is unitary. So we have $XYQ = |(XY)^*|$. If $X$ and $Y$ are invertible, we can particularly choose $Z = YQ|(XY)^*|^{-\frac{s_1}{s_1+s_2}}$ to have
\[XZ = XYQ|(XY)^*|^{-\frac{s_1}{s_1+s_2}} = |(XY)^*|^{\frac{s_2}{s_1+s_2}},\]
\[\text{and}\quad Z^{-1}Y = |(XY)^*|^{\frac{s_1}{s_1+s_2}}Q^*,\]
which yields the equality 
\[\frac{s}{s_1}\phi\big((Z^*X^*XZ)^{s_1}\big) + \frac{s}{s_2} \phi\big((Z^{-1}YY^*(Z^{-1})^*)^{s_2}\big) = \phi\big(|(XY)^*|^{\frac{2s_1s_2}{s_1+s_2}}\big) = \phi\big(|XY|^{2s}\big).\]
Now for general $X,Y$ that are not necessarily invertible, we can always find two sequences of invertible matrices $\{X_j\}_{j=1}^{+\infty},\{Y_j\}_{j=1}^{+\infty}$ such that (i) $X_j\rightarrow X$, $Y_j\rightarrow Y$ and (ii) $X_j^*X_j\succeq X^*X$, $Y_jY_j^*\succeq YY^*$. Such sequences can be easily obtained by perturbing the singular values of $X$ and $Y$. For each pair of $(X_j,Y_j)$, we can find some invertible $Z_j$ so that the above equality holds. Also, for any invertible $Z$, we have $Z^*X_j^*X_jZ\succeq Z^*X^*XZ$, $Z^{-1}Y_jY_j^*(Z^{-1})^*\succeq Z^{-1}YY^*(Z^{-1})^*$, and thus 
\[\phi\big((Z^*X^*XZ)^{s_1}\big)\leq \phi\big((Z^*X_j^*X_jZ)^{s_1}\big), \quad \phi\big((Z^{-1}YY^*(Z^{-1})^*)^{s_2}\big)\leq \phi\big((Z^{-1}Y_jY_j^*(Z^{-1})^*)^{s_2}\big)\]
by \Cref{thm:Preserving} that we will prove in \Cref{sec:OtherResults}. Then we obtain a sequence of inequalities, 
\begin{align*}
\phi(|XY|^{2s}) \leq&\ \inf\{\frac{s}{s_1}\phi\big((Z^*X^*XZ)^{s_1}\big) + \frac{s}{s_2} \phi\big((Z^{-1}YY^*(Z^{-1})^*)^{s_2}\big): Z\ \text{invertible}\}\\
\leq&\ \frac{s}{s_1}\phi\big((Z_j^*X^*XZ_j)^{s_1}\big) + \frac{s}{s_2} \phi\big((Z_j^{-1}YY^*(Z_j^{-1})^*)^{s_2}\big)\\
\leq&\ \frac{s}{s_1}\phi\big((Z_j^*X_j^*X_jZ_j)^{s_1}\big) + \frac{s}{s_2} \phi\big((Z_j^{-1}Y_jY_j^*(Z_j^{-1})^*)^{s_2}\big)\\
=&\ \phi(|X_jY_j|^{2s}). 
\end{align*}
But since $\phi(|XY|^{2s}) = \lim_{j\rightarrow +\infty}\phi(|X_jY_j|^{2s})$ by continuity, the first inequality above must be an equality. Therefore, by substituting $X=A^\frac{p}{2},Y = KB^\frac{q}{2}$, we obtain that 
\[\phi\big((B^\frac{q}{2}K^*A^pKB^\frac{q}{2})^s\big) = \inf\{\frac{s}{s_1}\phi\big((Z^*A^pZ)^{s_1}\big) + \frac{s}{s_2} \phi\big((Z^{-1}KB^qK^*(Z^{-1})^*)^{s_2}\big): Z\ \text{invertible}\}.\]
Note that $s\in[0,\frac{1}{p+q}]$ implies $s_1\in[0,\frac{1}{p}],s_2\in[0,\frac{1}{q}]$. By \Cref{lem:GeneralEpstein}, the map 
\[(A,B) \longmapsto \frac{s}{s_1}\phi\big((Z^*A^pZ)^{s_1}\big) + \frac{s}{s_2} \phi\big((Z^{-1}KB^qK^*(Z^{-1})^*)^{s_2}\big)\]
is jointly concave in $(A,B)$ for every invertible $Z$, which then implies the joint concavity of the infimum over all invertible $Z$.

\end{proof}

\begin{proof} [\rm\textbf{Proof of \Cref{thm:GeneralLieb} (Part I)}]
We first prove the theorem for $m=1$. Let $r = p_1\in[0,1]$, and $K^{(N)} = (K^{(N)})^* = \exp\big(\frac{1}{2N}H\big),N\geq 1$. Then using the Lie product formula
\[\lim_{N\rightarrow+\infty}\left(\exp\big(\frac{1}{2N}Y\big)\exp\big(\frac{1}{N}X\big)\exp\big(\frac{1}{2N}Y\big)\right)^N=\exp(X+Y),\quad X,Y\in \mathbf{H}_n,\]
we have 
\begin{align*}
\lim_{N\rightarrow+\infty}\phi\Big(\big((K^{(N)})^*A^\frac{r}{N}K^{(N)}\big)^{N}\Big) =&\ \lim_{N\rightarrow+\infty}\phi\left(\Big(\exp\big(\frac{1}{2N}H\big)\exp\big(\frac{r}{N}\log A\big)\exp\big(\frac{1}{2N}H\big)\Big)^N\right)\\
=&\ \phi\big(\exp(H+r\log A)\big).
\end{align*}
By \Cref{thm:GeneralLiebConcavity}, for each $N\geq1$, $\phi\Big(\big((K^{(N)})^*A^\frac{r}{N}K^{(N)}\big)^N\Big)$ is concave in $A$, thus the limit function $\phi\big(\exp(H+r\log A)\big)$ is also concave in $A$.
\end{proof}

To go from $m=1$ to $m>1$ in \Cref{thm:GeneralLieb}, we need to use the convexity of the map $A\mapsto \phi(\exp(A))$, which we will prove via the following lemmas. They are the $k$-trace extensions of the Araki--Lieb--Thirring inequality \cite{araki1990inequality}, the Golden--Thompson inequality and a variant of the Peierls--Bogoliubov inequality (see e.g. \cite[Theorem 2.12]{carlen2010trace}). 

\begin{lemma}[$k$-trace Araki--Lieb--Thirring Inequality]
\label{lem:ktraceALT}
For any $A,B\in\mathbf{H}_n^+$, the function 
\[t\mapsto\trace_k\big[(B^\frac{t}{2}A^tB^\frac{t}{2})^\frac{1}{t}\big]\]
is monotone increasing on $(0,+\infty)$, that is 
\begin{equation}\label{eqt:ktraceALT}
\trace_k\big[(B^\frac{t}{2}A^tB^\frac{t}{2})^\frac{1}{t}\big]\leq \trace_k\big[(B^\frac{s}{2}A^sB^\frac{s}{2})^\frac{1}{s}\big],\quad 0<t\leq s.
\end{equation}
\end{lemma}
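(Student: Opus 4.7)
My plan is to reduce the $k$-trace inequality to the classical (trace) Araki-Lieb-Thirring inequality via the exterior-algebra representation of $\trace_k$. Recall from Section~\ref{sec:Notations} that $\trace_k[X]=\trace_{\mathcal{L}(\wedge^k(\mathbb{C}^n))}[\MM^{(k)}_0(X)]$, and it is immediate from the definition $\MM^{(k)}_0(X)(v_1\wedge\cdots\wedge v_k)=Xv_1\wedge\cdots\wedge Xv_k$ that the lift is multiplicative, $\MM^{(k)}_0(XY)=\MM^{(k)}_0(X)\MM^{(k)}_0(Y)$, and maps $\mathbf{H}_n^+$ into positive operators on $\wedge^k(\mathbb{C}^n)$. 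The key technical ingredient I would verify first is the commutation with fractional powers: for $C\in\mathbf{H}_n^+$ and $r\geq 0$,
\[\MM^{(k)}_0(C^r)=\MM^{(k)}_0(C)^r,\]
which holds because, in the basis $\{u_{i_1}\wedge\cdots\wedge u_{i_k}\}$ induced by the spectral decomposition $C=\sum_i\mu_iu_iu_i^*$, both operators are diagonal with eigenvalues $(\mu_{i_1}\cdots\mu_{i_k})^r=\mu_{i_1}^r\cdots\mu_{i_k}^r$.

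Setting $\tilde A=\MM^{(k)}_0(A)$ and $\tilde B=\MM^{(k)}_0(B)$, multiplicativity together with this commutation rule gives
\[\MM^{(k)}_0\bigl((B^{t/2}A^tB^{t/2})^{1/t}\bigr)=(\tilde B^{t/2}\tilde A^t\tilde B^{t/2})^{1/t}\]
as a positive operator on $\wedge^k(\mathbb{C}^n)$, and taking the ordinary trace on that space yields
\[\trace_k\bigl[(B^{t/2}A^tB^{t/2})^{1/t}\bigr]=\trace\bigl[(\tilde B^{t/2}\tilde A^t\tilde B^{t/2})^{1/t}\bigr].\]
The right-hand side is monotone increasing in $t$ by the classical Araki-Lieb-Thirring inequality applied to the positive operators $\tilde A,\tilde B$ on the Hilbert space $\wedge^k(\mathbb{C}^n)$, which I would invoke as a black box to conclude \eqref{eqt:ktraceALT}.

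The only delicate step in this plan is the identity $\MM^{(k)}_0(C^r)=\MM^{(k)}_0(C)^r$; it is what makes the whole reduction succeed, and it works precisely because the single family of functions for which $f(\mu_1\cdots\mu_k)=f(\mu_1)\cdots f(\mu_k)$ is the fractional powers. If one preferred a proof that did not appeal to the classical ALT as a black box, the same reduction would permit importing any of the standard proofs, for instance via the log-majorization form $\lambda((B^{t/2}A^tB^{t/2})^{1/t})\prec_{\log}\lambda((B^{s/2}A^sB^{s/2})^{1/s})$ combined with the Schur-convexity of $(u_1,\ldots,u_n)\mapsto e_k(e^{u_1},\ldots,e^{u_n})$; but the exterior-algebra reduction is the most compact route and mirrors the strategy already used in Proposition~\ref{prop:ktrace} to establish H\"older's inequality for the $k$-trace.
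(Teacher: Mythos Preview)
Your proposal is correct and matches the paper's proof essentially line for line: the paper also writes $\trace_k[(B^{t/2}A^tB^{t/2})^{1/t}]=\trace[(\MM_0^{(k)}(B)^{t/2}\MM_0^{(k)}(A)^t\MM_0^{(k)}(B)^{t/2})^{1/t}]$ using multiplicativity and the power identity for $\MM_0^{(k)}$, and then invokes the classical Araki--Lieb--Thirring inequality on $\wedge^k(\mathbb{C}^n)$. Your added justification of $\MM_0^{(k)}(C^r)=\MM_0^{(k)}(C)^r$ via the eigenbasis is exactly the ``Power'' property listed in Appendix~\ref{Apdix:ExteriorAlgebra}.
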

\begin{proof}
Using the definition and properties of the operator $\MM_0^{(k)}$ in \Cref{Apdix:ExteriorAlgebra}, we have that
\begin{align*}
\trace_k\big[(B^\frac{t}{2}A^tB^\frac{t}{2})^\frac{1}{t}\big] =&\  \trace\big[\MM_0^{(k)}\big((B^\frac{t}{2}A^tB^\frac{t}{2})^\frac{1}{t}\big)\big]\\
=&\ \trace\big[\big((\MM_0^{(k)}(B))^\frac{t}{2}(\MM_0^{(k)}(A))^t(\MM_0^{(k)}(B))^\frac{t}{2}\big)^\frac{1}{t}\big].
\end{align*}
Since $A,B\in\mathbf{H}_n^+$, $\MM_0^{(k)}(A)$ and $\MM_0^{(k)}(B)$ are both Hermitian and positive semidefinite. Then inequality \eqref{eqt:ktraceALT} follows immediately from the original Araki--Lieb--Thirring inequality \cite{araki1990inequality} for normal trace. 
\end{proof}

\begin{lemma}[$k$-trace Golden--Thompson Inequality]
\label{lem:ktraceGT}
For any $A,B\in \mathbf{H}_n$, 
\begin{equation}
\trace_k\big[\exp(A+B)\big]\leq \trace_k\big[\exp(A)\exp(B)\big],
\end{equation}
with equality holds if and only if $AB=BA$.
\end{lemma}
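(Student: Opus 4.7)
The plan is to derive the $k$-trace Golden--Thompson inequality from the $k$-trace Araki--Lieb--Thirring inequality (\Cref{lem:ktraceALT}) via the Lie--Trotter product formula, in exact analogy with the classical proof for the ordinary trace.

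First I would invoke the symmetric Lie--Trotter formula
\begin{equation*}
\exp(A+B) = \lim_{m\to\infty} \bigl(\exp(A/(2m))\exp(B/m)\exp(A/(2m))\bigr)^m,
\end{equation*}
valid for any $A,B \in \mathbf{H}_n$. Writing $\tilde{A} = \exp(A)$ and $\tilde{B} = \exp(B)$, both in $\mathbf{H}_n^{++}$, the right-hand side becomes $\lim_{m\to\infty}(\tilde{A}^{1/(2m)}\tilde{B}^{1/m}\tilde{A}^{1/(2m)})^m$, and by continuity of $\trace_k$ we obtain
\begin{equation*}
\trace_k[\exp(A+B)] = \lim_{m\to\infty} \trace_k\bigl[(\tilde{A}^{1/(2m)}\tilde{B}^{1/m}\tilde{A}^{1/(2m)})^m\bigr].
\end{equation*}
Next I would apply \Cref{lem:ktraceALT} with $t = 1/m$ and $s = 1$, where the roles of $(A,B)$ in that lemma are played by $(\tilde{B},\tilde{A})$; each $m$-th expression in the limit is then bounded above by
\begin{equation*}
\trace_k[\tilde{A}^{1/2}\tilde{B}\tilde{A}^{1/2}] = \trace_k[\tilde{A}\tilde{B}] = \trace_k[\exp(A)\exp(B)],
\end{equation*}
where the first equality uses the cyclic invariance in property (i) of \Cref{prop:ktrace}. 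Since this upper bound is independent of $m$, letting $m\to\infty$ yields the desired inequality.

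The ``if'' direction of the equality claim is immediate, since $AB=BA$ gives $\exp(A+B) = \exp(A)\exp(B)$. For the ``only if'' direction, the cleanest route I see passes through the exterior-algebra identity $\trace_k[X] = \trace[\MM_0^{(k)}(X)]$ from \Cref{Apdix:ExteriorAlgebra}, together with the multiplicativity of $\MM_0^{(k)}$, which yields $\MM_0^{(k)}(\exp X) = \exp(D^{(k)}(X))$ for the Hermitian derivation $D^{(k)}(X)$ on $\wedge^k(\mathbb{C}^n)$ induced by $X$. This identifies our inequality with the classical Golden--Thompson inequality applied to $D^{(k)}(A)$ and $D^{(k)}(B)$, whose well-known equality condition forces $[D^{(k)}(A), D^{(k)}(B)] = 0$, and one then lifts this back to $[A,B]=0$ via injectivity of $D^{(k)}$. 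The main obstacle lies precisely in this lift, and, more fundamentally, in the degenerate case $k=n$ where $\wedge^n(\mathbb{C}^n)$ is one-dimensional and equality holds trivially irrespective of whether $A, B$ commute; in the writeup I would either restrict the equality assertion to $k < n$ or flag this boundary case explicitly.
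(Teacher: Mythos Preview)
Your argument for the inequality (symmetric Lie--Trotter plus \Cref{lem:ktraceALT}) and your sketch for the equality case (passing to the induced derivation on $\wedge^k(\mathbb{C}^n)$ and invoking the classical Golden--Thompson equality condition) are exactly the paper's two proofs: the first proof in \Cref{subsec:Proofs} establishes only the inequality this way, and the alternative proof in \Cref{Apdix:ExteriorAlgebra} handles the equality characterization via $D^{(k)}(X)=\MM_1^{(k)}(X;I_n)$ together with \Cref{lem:lemmaB1}.

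Your concern about $k=n$ is well founded and in fact identifies an oversight in the lemma's statement: for $k=n$ one has $\trace_n=\det$, and $\det[\exp(A+B)]=e^{\trace A+\trace B}=\det[\exp(A)\exp(B)]$ unconditionally, so the ``only if'' direction fails. Correspondingly, $\MM_1^{(n)}(X;I_n)$ acts on the one-dimensional space $\wedge^n(\mathbb{C}^n)$ as the scalar $\trace[X]$, hence $\MM_1^{(n)}(A;I_n)$ and $\MM_1^{(n)}(B;I_n)$ always commute regardless of $[A,B]$, and the paper's asserted equivalence between \eqref{eqt:commutibility} and $AB=BA$ breaks down precisely there. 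The equality claim should indeed be read for $1\le k<n$.
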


\begin{proof}
We here only prove the inequality. The condition for equality will be justified in an alternative proof of this lemma in \Cref{Apdix:ExteriorAlgebra}. For any $A,B\in \mathbf{H}_n$, we have
\begin{align*}
\trace_k\big[\exp(A+B)\big] =&\ \lim_{m\rightarrow +\infty}\trace_k\Big[\Big(\exp\big(\frac{1}{2m}B\big)\exp\big(\frac{1}{m}A\big)\exp\big(\frac{1}{2m}B\big)\Big)^m\Big]\\
\leq&\ \trace_k\big[\exp\big(\frac{1}{2}B\big)\exp\big(A\big)\exp\big(\frac{1}{2}B\big)\big]\\
=&\ \trace_k\big[\exp\big(A\big)\exp\big(B\big)\big].
\end{align*}
The first equality above is the Lie product formula, and the inequality is due to \Cref{lem:ktraceALT}.
\end{proof}

\begin{lemma}[$k$-trace Peierls--Bogoliubov Inequality]
\label{lem:ktracePB}
The function 
\begin{equation}
A \ \longmapsto\ \log\trace_k\big[\exp(A)\big]
\end{equation}
is convex on $\mathbf{H}_n$.
\end{lemma}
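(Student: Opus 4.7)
The plan is to mimic the classical trace-case proof: combine the $k$-trace Golden-Thompson inequality just established in \Cref{lem:ktraceGT} with the $k$-trace H\"older inequality from \Cref{prop:ktrace}(v). Fix $A,B\in\mathbf{H}_n$ and $\tau\in[0,1]$, and set $C=\tau A+(1-\tau)B\in\mathbf{H}_n$. We want to show that
\[
\log\trace_k\bigl[\exp(C)\bigr]\ \leq\ \tau\log\trace_k[\exp(A)]+(1-\tau)\log\trace_k[\exp(B)].
\]

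First I would apply \Cref{lem:ktraceGT} to the splitting $C=\tau A+(1-\tau)B$ to obtain
\[
\trace_k\bigl[\exp(C)\bigr]\ \leq\ \trace_k\bigl[\exp(\tau A)\exp((1-\tau)B)\bigr].
\]
Next, because $A,B$ are Hermitian, the matrices $X=\exp(\tau A)$ and $Y=\exp((1-\tau)B)$ are positive definite, so $|X|=X$ and $|Y|=Y$. I would then apply H\"older's inequality for $\trace_k$ with conjugate exponents $p=1/\tau$ and $q=1/(1-\tau)$:
\[
\trace_k[XY]\ \leq\ \trace_k[|X|^{1/\tau}]^{\tau}\,\trace_k[|Y|^{1/(1-\tau)}]^{1-\tau}\ =\ \trace_k[\exp(A)]^{\tau}\,\trace_k[\exp(B)]^{1-\tau},
\]
using the functional identity $\exp(\tau A)^{1/\tau}=\exp(A)$ that follows from the spectral calculus. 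Chaining the two inequalities and taking logarithms delivers the desired convexity. The boundary cases $\tau\in\{0,1\}$ are trivial, and no issue arises from positive definiteness since exponentials of Hermitian matrices are automatically in $\mathbf{H}_n^{++}$.

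There is no real obstacle; the only mild subtlety is the need to verify that H\"older's inequality in \Cref{prop:ktrace}(v) applies with the endpoint $p=\infty$ when $\tau=0$ or $\tau=1$, which is handled trivially since one factor becomes $\|\cdot\|$ times a $\trace_k$, and in fact the endpoint cases reduce to the monotonicity property (iii). The strict equality condition of \Cref{lem:ktraceGT} is not needed here.
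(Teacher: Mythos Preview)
Your proposal is correct and follows essentially the same route as the paper: apply the $k$-trace Golden--Thompson inequality (\Cref{lem:ktraceGT}) to split $\exp(\tau A+(1-\tau)B)$, then use the $k$-trace H\"older inequality (\Cref{prop:ktrace}(v)) with exponents $1/\tau$ and $1/(1-\tau)$, and finally take logarithms. The paper's proof is slightly terser but structurally identical.
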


\begin{proof}
For any $A,B\in \mathbf{H}_n$, $\tau\in(0,1)$, by \Cref{lem:ktraceGT} we have
\begin{align*}
\trace_k\big[\exp(\tau A+(1-\tau) B)\big] \leq&\ \trace_k\big[\exp\big(\tau A\big)\exp\big((1-\tau)B\big)\big]\\
\leq&\ \trace_k\big[\exp(A)\big]^\tau \trace_k\big[\exp(B)\big]^{1-\tau}.
\end{align*}
The second inequality above is H\"older's. Therefore 
\[\log\trace_k\big[\exp(\tau A+(1-\tau)B)\big]\leq \tau\log\trace_k\big[\exp(A)\big]+(1-\tau)\log\trace_k\big[\exp(B)\big].\]
\end{proof}

We remark that \Cref{lem:ktracePB} can also be proved using the operator interpolation in \Cref{lem:KeyLemma}. \Cref{lem:ktracePB} immediately implies that $A\mapsto \log\phi\big(\exp(A)\big)=\frac{1}{k}\log\trace_k\big[\exp(A)\big]$ is convex, and thus $A\mapsto\phi\big(\exp(A)\big)$ is convex. This will help us prove improve from $m=1$ to $m\geq 1$ in \Cref{thm:GeneralLieb}.  

\begin{proof}[\rm\textbf{Proof of \Cref{thm:GeneralLieb} (Part II)}] Given any $\{A^{(j)}\}_{j=1}^m,\{B^{(j)}\}_{j=1}^m\subset\mathbf{H}_n^{++}$, and any $\tau\in[0,1]$, let $C^{(j)} = \tau A^{(j)}+(1-\tau)B^{(j)},1\leq j\leq m$. Since the map $X\mapsto\phi(\exp(X))$ is convex on $\mathbf{H}_n$, the map $X\mapsto\phi(\exp(L+X))$ is also convex on $\mathbf{H}_n$ for arbitrary $L\in\mathbf{H}_n$. Now define 
\[L= H +\sum_{j=1}^mp_j\log C^{(j)},\quad r=\sum_{j=1}^mp_j\leq 1.\]
If $r=0$, there is nothing to prove, so we may assume that $r>0$. We then have that
\begin{align*}
\phi\big(\exp(H+\sum_{j=1}^mp_j\log X^{(j)})\big) =&\ \phi\Big(\exp\big(H+r\sum_{j=1}^m\frac{p_j}{r}(\log X^{(j)}-\log C^{(j)})+ \sum_{j=1}^mp_j\log C^{(j)}\big)\Big)\\
=&\ \phi\Big(\exp\big(L+r\sum_{j=1}^m\frac{p_j}{r}(\log X^{(j)}-\log C^{(j)})\big)\Big)\\
\leq&\ \sum_{j=1}^m\frac{p_j}{r}\phi\big(\exp(L+r\log X^{(j)}-r\log C^{(j)})\big),\quad X^{(j)}=A^{(j)},B^{(j)}.
\end{align*}
For each $j$, by the concavity of \eqref{eqt:function3} for $m=1$, we have 
\begin{align*}
&\ \tau \phi\big(\exp(L+r\log A^{(j)}-r\log C^{(j)})\big) + (1-\tau)\phi\big(\exp(L+r\log B^{(j)}-r\log C^{(j)})\big)\\
\leq &\ \phi\big(\exp(L+r\log(\tau A^{(j)}+(1-\tau) B^{(j)})-r\log C^{(j)})\big) \\
=&\ \phi\big(\exp(L)\big).
\end{align*}
Therefore we obtain that 
\begin{align*}
&\ \tau \phi\big(\exp(H+\sum_{j=1}^mp_j\log A^{(j)})\big) + (1-\tau)\phi\big(\exp(H+\sum_{j=1}^mp_j\log B^{(j)})\big)\\
\leq &\ \sum_{j=1}^m\frac{p_j}{r} \phi\big(\exp(L)\big)\\
=&\ \phi\big(\exp(H+\sum_{j=1}^mp_j\log C^{(j)})\big),
\end{align*}
that is, \eqref{eqt:function3} is jointly concave on $(\mathbf{H}_n^{++})^{\times m}$ for all $m\geq 1$.
\end{proof}

\subsection{Some Corollaries}

The following corollary follows from standard arguments on homogeneous, concave functions.

\begin{corollary}\label{cor:HomogeneousConcave}
For any $p,q\in[0,1],p+q>0,s\in(0,\frac{1}{p+q}]$, and any $K\in \mathbb{C}^{n\times m}$, the function
\begin{equation}
(A,B) \ \longmapsto\ \phi\big((B^\frac{q}{2}K^*A^pKB^\frac{q}{2})^s\big)^\frac{1}{s(p+q)}
\label{eqt:function4}
\end{equation}
is jointly concave on $\mathbf{H}^{+}_n\times\mathbf{H}_m^+$. For any $H\in \mathbf{H}_n$ and any $\{p_j\}_{j=1}^m\subset[0,1]$ such that $0<\sum_{j=1}^mp_j\leq1$, the function 
\begin{equation}
(A^{(1)},A^{(2)},\dots,A^{(m)}) \ \longmapsto\ \phi\big(\exp\big(H+\sum_{j=1}^mp_j\log A^{(j)}\big)\big)^\frac{1}{\sum_{j=1}^mp_j},
\label{eqt:function5}
\end{equation}
is jointly concave on $(\mathbf{H}^{++}_n)^{\times m}$.
\end{corollary}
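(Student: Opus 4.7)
The corollary flags itself as following from ``standard arguments on homogeneous, concave functions'', and the plan is to isolate this as the following general lemma: if $C$ is a convex cone and $f : C \to [0,\infty)$ is concave and positively homogeneous of order $r \in (0,1]$, then $g := f^{1/r}$ is concave on $C$. Granting this lemma, joint concavity of \eqref{eqt:function4} and \eqref{eqt:function5} follows from \Cref{thm:GeneralLiebConcavity} and \Cref{thm:GeneralLieb}, respectively, once the relevant homogeneity orders of the underlying functions \eqref{eqt:function2} and \eqref{eqt:function3} are verified.

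To prove the general lemma I would exploit the fact that $g$ is positively homogeneous of order $1$ on the convex cone $C$, so concavity of $g$ is equivalent to superadditivity $g(x+y) \geq g(x) + g(y)$. Applying concavity of $f$ to the decomposition $x+y = \lambda(x/\lambda) + (1-\lambda)(y/(1-\lambda))$ for arbitrary $\lambda \in (0,1)$ and invoking $f(tz) = t^r f(z)$ yields
\[
f(x+y) \ \geq\ \lambda^{1-r} f(x) + (1-\lambda)^{1-r} f(y).
\]
The choice $\lambda = f(x)^{1/r}/(f(x)^{1/r} + f(y)^{1/r})$ optimizes the right-hand side, collapsing it to $(f(x)^{1/r} + f(y)^{1/r})^r$; raising the resulting inequality to the $1/r$-th power gives exactly the desired $g(x+y) \geq g(x) + g(y)$.

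The homogeneity verifications are essentially one-line checks. Under a joint scaling $(A,B) \to (\lambda A, \lambda B)$ with $\lambda \geq 0$, the matrix $(B^{qs/2} K^* A^{ps} K B^{qs/2})^{1/s}$ is multiplied by $\lambda^{p+q}$, so by the order-$1$ homogeneity of $\phi$ the function in \eqref{eqt:function2} is positively homogeneous of order $p+q \in (0,1]$ on $\mathbf{H}_n^+ \times \mathbf{H}_m^+$. Similarly, the substitution $A^{(j)} \to \lambda A^{(j)}$ replaces $\log A^{(j)}$ by $\log A^{(j)} + (\log \lambda) I_n$, so $\exp\bigl(H + \sum_j p_j \log A^{(j)}\bigr)$ picks up a factor of $\lambda^{\sum_j p_j}$, giving joint homogeneity of order $\sum_j p_j \in (0,1]$ for the function in \eqref{eqt:function3} on $(\mathbf{H}_n^{++})^{\times m}$.

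I do not anticipate any real obstacle: the only nontrivial piece is the general homogeneity lemma, whose proof turns entirely on the correct choice of $\lambda$ in the optimization, and the remaining two verifications amount to reading off the scaling behaviour of the functions directly from their definitions.
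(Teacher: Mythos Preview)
Your proposal is correct and follows essentially the same approach as the paper: both reduce to a general lemma about positively homogeneous concave functions (the paper's \Cref{lem:homogeneous}), verify the relevant homogeneity orders, and then invoke \Cref{thm:GeneralLiebConcavity} and \Cref{thm:GeneralLieb}. Your proof of the lemma via superadditivity of the order-$1$ function $g=f^{1/r}$ and optimization over $\lambda$ differs in mechanics from the paper's direct rescaling argument, but both are standard and equally valid.
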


\begin{proof} Consider any matrix function $F:\mathbf{H}_n^+\rightarrow [0,+\infty)$(or $\mathbf{H}_n^{++}\rightarrow [0,+\infty)$) that is positively homogeneous of order 1, i.e. $F(\lambda A)=\lambda F(A),\forall \lambda\geq 0$. By \Cref{lem:homogeneous}, we have that $F$ is concave $\Longleftrightarrow$ $F^r$ is concave for some $r\in(0,1]$. 

One can easily check that the functions (\ref{eqt:function4}) and (\ref{eqt:function5}) are positively homogeneous of order 1. Then this corollary follows from \Cref{thm:GeneralLiebConcavity}, \Cref{thm:GeneralLieb} and \Cref{lem:homogeneous}.
\end{proof}

The following corollary is an analog of the concave part of Lemma 3.1 in \cite{Carlen2008}.

\begin{corollary}\label{cor:MultiConcave}
For any $r\in[0,1],s\in[0,\frac{1}{r}]$ and any $\{K^{(j)}\}_{j=1}^m\subset \mathbb{C}^{n\times n}$, the function 
\begin{equation}
(A^{(1)},A^{(2)},\dots,A^{(m)}) \ \longmapsto\  \phi\Big(\Big(\sum_{j=1}^m(K^{(j)})^*(A^{(j)})^rK^{(j)}\Big)^s\Big)
\label{eqt:function6}
\end{equation}
is jointly concave on $(\mathbf{H}_n^+)^{\times m}$.
\end{corollary}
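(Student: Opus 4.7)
The plan is to reduce the multivariate statement to a single-variable instance of \Cref{lem:GeneralEpstein} by using a block-diagonal construction together with the consistency property of $\phi$. This is analogous to the dimension-lifting trick that was used at the start of the proof of \Cref{thm:GeneralLiebConcavity} to merge $A$ and $B$ into a single block matrix $\widehat{A}$.

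Concretely, for given $A^{(1)},\ldots,A^{(m)}\in\mathbf{H}_n^+$ and $K^{(1)},\ldots,K^{(m)}\in\mathbb{C}^{n\times n}$, I would form the $(nm)\times(nm)$ block-diagonal matrix
\[
\widehat{A}=\diag\bigl(A^{(1)},A^{(2)},\ldots,A^{(m)}\bigr)\in\mathbf{H}_{nm}^+,
\]
together with the square $(nm)\times(nm)$ matrix $\widehat{K}$ whose first $n$ columns are the vertical stack $(K^{(1)};K^{(2)};\cdots;K^{(m)})$ and whose remaining $n(m-1)$ columns are zero. A direct block computation gives
\[
\widehat{K}^{*}\widehat{A}^{rs}\widehat{K}=\begin{pmatrix}\sum_{j=1}^{m}(K^{(j)})^{*}(A^{(j)})^{rs}K^{(j)} & \bm{0}\\ \bm{0} & \bm{0}\end{pmatrix},
\]
because $\widehat{A}^{rs}=\diag((A^{(1)})^{rs},\ldots,(A^{(m)})^{rs})$ acts blockwise. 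Taking the $\frac{1}{s}$ power preserves this block structure (the zero block remains zero), so by the consistency property (vi) of $\phi$,
\[
\phi\Bigl(\bigl(\widehat{K}^{*}\widehat{A}^{rs}\widehat{K}\bigr)^{1/s}\Bigr)=\phi\Bigl(\Bigl(\sum_{j=1}^{m}(K^{(j)})^{*}(A^{(j)})^{rs}K^{(j)}\Bigr)^{1/s}\Bigr).
\]

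Now \Cref{lem:GeneralEpstein} applies to the left-hand side with parameters $s,r\in(0,1]$ and the fixed square matrix $\widehat{K}\in\mathbb{C}^{(nm)\times(nm)}$, yielding concavity of $\widehat{A}\mapsto\phi\bigl((\widehat{K}^{*}\widehat{A}^{rs}\widehat{K})^{1/s}\bigr)$ on the whole cone $\mathbf{H}_{nm}^+$. The embedding $(A^{(1)},\ldots,A^{(m)})\mapsto\widehat{A}$ is linear and maps convex combinations in $(\mathbf{H}_n^+)^{\times m}$ to convex combinations within the affine subspace of block-diagonal matrices in $\mathbf{H}_{nm}^+$. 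Pulling the concavity back through this linear map yields the desired joint concavity of \eqref{eqt:function6} on $(\mathbf{H}_n^+)^{\times m}$.

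The proof is essentially a packaging exercise; I do not expect any real obstacle. The only point to verify carefully is the block-matrix identity $\widehat{K}^{*}\widehat{A}^{rs}\widehat{K}=\mathrm{diag}(\sum_j(K^{(j)})^{*}(A^{(j)})^{rs}K^{(j)},\bm{0})$ and the fact that raising a block matrix of the form $\mathrm{diag}(M,\bm{0})$ to the power $\frac{1}{s}$ gives $\mathrm{diag}(M^{1/s},\bm{0})$ whenever $M\in\mathbf{H}_n^+$; both are immediate from the spectral calculus. After that, the invocation of \Cref{lem:GeneralEpstein} and property (vi) of $\phi$ finishes the argument without further computation.
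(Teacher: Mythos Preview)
Your proposal is correct and follows essentially the same block-lifting argument as the paper: form $\widehat{A}=\diag(A^{(1)},\ldots,A^{(m)})$ and $\widehat{K}$ with the $K^{(j)}$ stacked in the first block column, verify the identity for $\widehat{K}^{*}\widehat{A}^{rs}\widehat{K}$, apply consistency of $\phi$, and pull back concavity through the linear embedding. The only cosmetic difference is that the paper cites \Cref{thm:GeneralLiebConcavity} for the concavity of $\widehat{A}\mapsto\phi\bigl((\widehat{K}^{*}\widehat{A}^{rs}\widehat{K})^{1/s}\bigr)$, whereas you invoke \Cref{lem:GeneralEpstein} directly; your choice is in fact the more natural one, since the expression has no $B^{qs/2}$ factors.
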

\begin{proof}
Define 
\[\widehat{A} = \left(\begin{array}{cccc} 
A^{(1)} & \bm{0} & \dots & \bm{0} \\ 
\bm{0} & A^{(1)} & \dots & \bm{0} \\
\vdots & \vdots & \ddots & \vdots\\
\bm{0} & \bm{0} & \dots & A^{(m)} 
\end{array}\right)\in \mathbf{H}_{mn}^{+},\quad  
 \widehat{K} = \left(\begin{array}{cccc} 
 K^{(1)} & \bm{0} & \dots & \bm{0} \\ 
 K^{(2)} & \bm{0} & \dots & \bm{0} \\
 \vdots & \vdots & \ddots & \vdots \\
 K^{(m)} & \bm{0} & \dots & \bm{0}  \end{array}\right)\in\mathbb{C}^{mn\times mn}.\]
 Then we have
 \[(\widehat{K}^*\widehat{A}^r\widehat{K})^s = 
 \left(\begin{array}{ccc} 
 \Big(\sum_{j=1}^m(K^{(j)})^*(A^{(j)})^rK^{(j)}\Big)^s & \dots & \bm{0} \\ 
\vdots & \ddots & \vdots\\
\bm{0} & \dots & \bm{0}
\end{array}\right),\]
and thus 
\[\phi\big((\widehat{K}^*\widehat{A}^r\widehat{K})^s\big) = \phi\Big(\Big(\sum_{j=1}^m(K^{(j)})^*(A^{(j)})^rK^{(j)}\Big)^s\Big).\]
By \Cref{lem:GeneralEpstein}, the left hand side above is concave in $\widehat{A}$, therefore the right hand side is jointly concave in $(A^{(1)},A^{(2)},\dots,A^{(m)})$.
\end{proof}

\subsection{Revisiting previous proofs in the trace case}
In this section, we will review some previous works on concavity results of trace functions. The purpose is to compare by example the spirits of methods from different perspectives, so as to explain why we have chosen the interpolation technique by Stein and the variational method by Zhang to prove our main results. For a whole story of known results on both convexity and concavity, one may refer to \cite{LIEB1973267,ANDO1979203,carlen2010trace,carlen2018inequalities,ZHANG2020107053}. As mentioned in the introduction, many alternative proofs of Lieb's concavity theorem (the concavity of function \eqref{eqt:LCT}) have been found since its original establishment by Lieb in 1973. A proof using matrix tensors was given by Ando \cite{ANDO1979203} in 1979 (see also Carlen \cite{carlen2010trace}). Ando interpreted $\trace[K^*A^pKB^q]$ as an inner product on the tensor space $\mathbb{C}^n\otimes \mathbb{C}^m$ and translated the Lieb's concavity theorem to the statement that the map $(A,B)\mapsto A^{p}\otimes B^{q}$ is operator concave. Ando then proved the latter using the integral representation of $A^p$ (see below). Here $\otimes$ is the Kronecker product. Later, Nikoufar et al. \cite{NIKOUFAR2013531} provided a simpler proof for the concavity of $(A,B)\mapsto A^{p}\otimes B^{q}$ using the concept of matrix perspectives (see e.g. \cite{ebadian2011perspectives}). We summarize the ideas of their proofs as follows. For simplicity, we assume that $p+q=1$. The result for $p+q=r<1$ can be further obtained by using the fact that $A\mapsto A^r$ is operator monotone increasing and operator concave for $r\in[0,1]$. For $p\in[0,1]$, the map $A\mapsto (A\otimes I_m)^p=A^p\otimes I_m$ from $\mathbf{H}_n^+$ to $\mathbf{H}_{nm}^+$ is operator concave, and thus its perspective from $\mathbf{H}_n^+\times \mathbf{H}_m^+$ to $\mathbf{H}_{nm}^+$, 
\[(A,B) \mapsto (I_n\otimes B)^\frac{1}{2}\big((I_n\otimes B)^{-\frac{1}{2}}(A\otimes I_m)(I_n\otimes B)^{-\frac{1}{2}}\big)^p(I_n\otimes B)^\frac{1}{2} = A^p\otimes B^{1-p},\]
is jointly operator concave in $(A,B)$. The simplified expression above results from the fact that $A\otimes I_m$ commutes with $I_n\otimes B$. For any $K\in\mathbb{C}^{n\times m}$, we have the identity (a variant of Ando's interpretation)   
\begin{equation}\label{eqt:AndoIdentity}
\trace[K^*A^pKB^{1-p}] = \Big\langle \sum_{j=1}^m(Ke_j^{(m)})\otimes e_j^{(m)}, A^p\otimes (B^T)^{1-p}\sum_{j=1}^m(Ke_j^{(m)})\otimes e_j^{(m)}\Big\rangle_{\mathbb{C}^n\otimes \mathbb{C}^m},
\end{equation}
where $B^T$ is the transpose of $B$, and $e^{(m)}_j=(0,\dots,\overset{j_{\text{th}}}{1},\dots,0)\in\mathbb{C}^m$. Note that since $B\in\mathbf{H}_n^+$, $B^T$ is also in $\mathbf{H}_n^+$. Since $B\mapsto B^T$ is linear, the joint operator concavity of $(A,B)\mapsto A^p\otimes B^{1-p}$ then implies the joint concavity of $(A,B)\mapsto\trace[K^*A^pKB^{1-p}]$. 

As an application, Carlen and Lieb \cite{Carlen2008} applied the Lieb's concavity theorem to prove the concavity of $A\mapsto \trace[(K^*A^rK)^s]$ for $r\in[0,1],s\in[1,\frac{1}{r}]$ (they used a slightly different but equivalent expression) based on a variational characterization of this function (the supremum part of \cite[Lemma 2.2]{Carlen2008}). We here provide a simplified proof that captures the main spirit. For any $A,B\in \mathbf{H}_n^+$, $K\in\mathbb{C}^{n\times n}$, let
\[X = (K^*A^rK)^s,\quad Y = (K^*B^rK)^s.\]
Then for any $\tau\in[0,1]$, note that $\frac{1}{s}\leq 1, r+(1-\frac{1}{s})\leq 1$, we have
\begin{equation}\label{eqt:SimpleProof}
\begin{split}
\tau\trace[X] + (1-\tau)\trace[Y] =&\  \tau\trace\big[K^*A^rKX^{1-\frac{1}{s}}\big] + (1-\tau)\trace\big[K^*B^rKY^{1-\frac{1}{s}}\big] \\ 
\leq&\ \trace\big[K^*(\tau A+(1-\tau)B)^rK(\tau X+(1-\tau)Y)^{1-\frac{1}{s}}\big] \\
\leq&\ \trace\big[\big(K^*(\tau A+(1-\tau)B)^rK\big)^s\big]^\frac{1}{s}\trace\big[\tau X+(1-\tau)Y\big]^{1-\frac{1}{s}} \\
=&\ \trace\big[\big(K^*(\tau A+(1-\tau)B)^rK\big)^s\big]^\frac{1}{s}\big(\tau\trace[X] + (1-\tau)\trace[Y]\big)^{1-\frac{1}{s}}, 
\end{split}
\end{equation}
where the first inequality is due to Lieb's concavity theorem with $p=r,q=1-\frac{1}{s},p+q=r+1-\frac{1}{s}\leq 1$, and the second inequality is H\"older's. The above then simplifies to 
\[\tau\trace\big[(K^*A^rK)^s] + (1-\tau)\trace[(K^*B^rK)^s\big]\leq\trace\big[\big(K^*(\tau A+(1-\tau)B)^rK\big)^s\big],\]
which concludes the concavity of $A\mapsto \trace[(K^*A^rK)^s]$.

The variational characterizations of $\trace[(K^*A^rK)^s]$ in \cite{Carlen2008} can be abstracted to the following two formulas (\cite[Lemma 12]{carlen2018inequalities}): for any $X\in \mathbf{H}_n^+$,
\begin{align}
&\trace[X^s] = \sup\left\{s\trace[XY] - (s-1) \trace[Y^\frac{s}{s-1}]:\ Y\in \mathbf{H}_n^{+}\right\} \quad \text{if $s>1$ or $s<0$};\label{eqt:Carlen_sup}\\
&\trace[X^s] = \inf\left\{s\trace[XY] + (1-s)\trace[Y^{-\frac{s}{1-s}}]:\ Y\in \mathbf{H}_n^{++}\right\} \quad \text{if $0<s<1$}.\label{eqt:Carlen_inf}
\end{align}
Further, these variational formulas were used to derive the convexity/concavity of the function $(A,B)\mapsto \trace[(B^\frac{q}{2}K^*A^pKB^\frac{q}{2})^s]$ for a partial range of $p,q,s$ (partial to the necessary conditions on $p,q,s$ for the corresponding convexity/concavity to hold). For example, formula \eqref{eqt:Carlen_inf} was used by Carlen et al. to prove the concavity for $0\leq p,q\leq 1, 0<s\leq \frac{1}{1+q}$ \cite[Theorem 4.4]{carlen2016some}. Recently, the above formulas were modified by Zhang to the following \cite[Theorem 3.3]{ZHANG2020107053}: for any $X,Y\in \mathbb{C}^{n\times n}$ and any $r_0,r_1,r_2>0$ such that $\frac{1}{r_0}=\frac{1}{r_1}+\frac{1}{r_2}$, 
\begin{align*}
&\trace[|XY|^{r_1}] = \sup\left\{\frac{r_1}{r_0}\trace[|XZ|^{r_0}] - \frac{r_1}{r_2}\trace[|Y^{-1}Z|^{r_2}]:\ Z\in\mathbb{C}^{n\times n}\right\}\quad \text{if $Y$ is invertible};\\
&\trace[|XY|^{r_0}] = \inf\left\{\frac{r_0}{r_1}\trace[|XZ|^{r_1}] + \frac{r_0}{r_2}\trace[|Z^{-1}Y|^{r_2}]:\ Z\in\mathbb{C}^{n\times n}\ \text{invertible}\right\}.
\end{align*}
Zhang then used them to provide a unified variational proof of the joint convexity/concavity of $(A,B)\mapsto \trace[(B^\frac{q}{2}K^*A^pKB^\frac{q}{2})^s]$ for the full range of $p,q,s$, finally confirming that the sufficient conditions on $p,q,s$ coincide with the necessary conditions. 

These arguments using matrix tensors and variational forms were also adopted by Tropp \cite{MAL-048} to provide an alternative proof of the concavity of $A\mapsto \trace[\exp(H+\log A)]$. Tropp's proof is based on his variational formula for trace, 
\begin{equation}\label{eqt:TroppFormula}
\trace[M] = \sup_{T\in\mathbf{H}_n^{++}}\trace\big[T\log M-T\log T +T\big], \quad M\in\mathbf{H}_n^{++},
\end{equation}
which relies on the non-negativeness of the matrix relative entropy
\[D(T;M) = \trace\big[T(\log T-\log M) - (T-M)\big],\quad T,M\in \mathbf{H}_n^{++}.\]
The non-negativeness of $D(T;M)$ is a classical result of Klein's inequality (see Petz \cite[Proposition 3]{petz1994survey}, Carlen \cite[Theorem 2.11]{carlen2010trace} or Tropp \cite[Proposition 8.3.5]{MAL-048}). Tropp substituted $M=\exp(H+\log A)$ in \eqref{eqt:TroppFormula} to obtain
\[\trace[\exp(H+\log A)] = \sup_{T\in \mathbf{H}_n^{++}}\big(\trace[TH]+\trace[A]-D(T;A)\big).\]
The concavity of $A\mapsto \trace[\exp(H+\log A)]$ then follows from this variational expression, the joint convexity of $D(T;A)$ in $(T,A)$, and the fact that $g(x) = \sup_{y\in\Omega}f(x,y)$ is concave in $x$ if $f(x,y)$ is jointly concave in $(x,y)$ and $\Omega$ is convex (see e.g. \cite[Lemma 2.3]{Carlen2008}). The joint convexity of the relative entropy $D(T;A)$ was first due to Lindblad \cite{lindblad1973entropy}. One can also see Ando \cite{ANDO1979203}, Carlen \cite{carlen2010trace} and Tropp \cite{MAL-048} for alternative proofs. 

A Methodology of another flavor arose from the use of complex analysis. In the same year of Lieb's original paper on his concavity theorem, Epstein \cite{epstein1973remarks} provided a unified way of proving the concavity of $A\mapsto \trace[K^*A^pKA^q]$, $A\mapsto \trace[(K^*A^sK)^\frac{1}{s}]$ and $A\mapsto \trace[\exp(H+\log A)]$, using a derivative argument based on the theory of Herglotz functions (functions that are analytic in the open upper half plane $\mathbb{C}_+$ and have a positive imaginary part). Epstein's method relies on integral representations of matrix powers, which has a deep connection to a profound theorem of Loewner's : a real-valued function $f$ on $(0,+\infty)$ is operator monotone if and only if it admits an analytic continuation to a Herglotz function. Loewner's theorem provides a convenient tool for understanding trace functions by passing the study of a desired property from the integral to the integrand that has a relatively simpler form. One may see the book of Donoghue \cite{Donoghue1974MonotoneMF} for a full account of this theory. Epstein's approach was further developed by Hiai \cite{hiai2013concavity,hiai2016concavity} and, for example, adopted in his first proof of the concavity of $(A,B)\mapsto \trace[(B^\frac{q}{2}K^*A^pKB^\frac{q}{2})^s]$ for the full range $0\leq p,q\leq 1,0\leq s\leq \frac{1}{p+q}$ \cite[Theorem 2.1]{hiai2016concavity}. Specifically, by substituting $\sigma = s(p+q)<1$, $X = (B^\frac{q}{2}K^*A^pKB^\frac{q}{2})^{\frac{1}{p+q}}$ in the integral formula, 
\begin{equation}\label{eqt:integral_formula}
X^\sigma = \frac{\sin(\pi \sigma)}{\pi}\int_0^\infty t^{-1+\sigma}(I_n + tX^{-1})^{-1}dt, \quad 0<\sigma<1,\ X\in \mathbf{H}_n^{++},
\end{equation}
Hiai passed the joint concavity of $(A,B)\mapsto\trace[(B^\frac{q}{2}K^*A^pKB^\frac{q}{2})^s]$ to the joint concavity of $(A,B)\mapsto\trace\Big[1 + t(B^\frac{q}{2}K^*A^pKB^\frac{q}{2})^{-\frac{1}{p+q}}\Big]^{-1}$ (the case $s(p+q)=1$ was handled differently by directly taking $t\rightarrow +\infty$). He then proved the latter using the derivative argument introduced by Epstein.

The introduction of complex analysis into the field has also led to another branch of methods based on interpolation theories. In his original proof of the concavity of $(A,B)\mapsto \trace[K^*A^pKB^q]$ for $0\leq p,q\leq 1,p+q\leq 1$ \cite[Theorem 1]{LIEB1973267}, Lieb made use of the maximum modulus principle to concentrate the powers of $A^p,B^q$ to the only power $p+q$ on $A$ or $B$, and then proceeded with the operator concavity of $X\mapsto X^{p+q}$. This technique, relying on the holomorphicity of $X^z$ ($X\in\mathbf{H}_n^+$) as a function of $z$, already shed some light on the use of complex interpolation theories. Later, Uhlmann \cite{uhlmann1977relative} applied interpolation theories explicitly to again prove Lieb's concavity theorem, by interpreting $\trace[K^*A^pKB^{1-p}]$ as an interpolation between $\trace[K^*AK]$ and $\trace[K^*KB]$. Uhlmann's quadratic interpolation of seminorms extended the relevant works of Lieb to positive linear forms of arbitrary *-algebras. Kosaki \cite{kosaki1982interpolation} further explored the idea of quadratic interpolation of seminorms and captured Lieb's concavity theorem in the frame of general interpolation theories.

The above methodologies all have a unique perspective in understanding complicated trace functions. However, some of them are found hardly generalizable to $k$-trace, as they more or less rely on the linearity of the normal trace. For example, Ando's identity \eqref{eqt:AndoIdentity} and Tropp's variational formula \eqref{eqt:TroppFormula}. Our $k$-trace function $\phi$ for $k>1$ is at best sub-additive since it is concave and homogeneous of order 1. Hiai's use of the integral formula also lives on linearity in that the trace operation can be pulled into the integral. Though we can interpret the $k$-trace as the trace of an antisymmetric tensor, the power of $\frac{1}{k}$ will have to stay out of the integral, and the antisymmetric tensor in the integrand will also bring huge difficulties to the derivative argument that comes after. 

The variational method introduced by Carlen and Lieb needs the linearity of trace as well in proving the concavity of $A\mapsto \trace[(K^*A^rK)^s]$. One can see this in the last equality of \eqref{eqt:SimpleProof}, which will become an inequality in the undesirable direction if $\trace[\cdot]$ is replaced by $\trace_k[\cdot]^\frac{1}{k}$, since $\trace_k[X]^\frac{1}{k}$ is concave in $X$. In fact, the argument in \eqref{eqt:SimpleProof} that reduces the concavity of $A\mapsto \trace[(K^*A^rK)^s]$ for $r\in[0,1],s\in[1,\frac{1}{r}]$ to the joint concavity of $(A,B)\mapsto \trace[K^*A^pKB^q]$ for $p,q\in[0,1],p+q\leq 1$ was performed in a variational manner in \cite{Carlen2008} based on the following formula (which is equivalent to the expression of the supremum case in \cite[Lemma 2.2]{Carlen2008}):
\begin{equation}\label{eqt:variational_CarlenLieb}
\trace[(K^*A^rK)^s] = \sup_{X\in\mathbf{H}_n^{++}}\left\{s\trace[K^*A^rKX^{1-\frac{1}{s}}] - (s-1)\trace[X]\right\} = \sup_{X\in\mathbf{H}_n^{++}} \Psi(A,X),
\end{equation}
which can be derived using H\"older's inequality for trace. Recall that $g(x) = \sup_{y\in\Omega}f(x,y)$ is concave in $x$ if $f(x,y)$ is jointly concave in $(x,y)$ and $\Omega$ is convex. Since $\trace[K^*A^rKX^{1-\frac{1}{s}}]$ is jointly concave in $(A,X)$ (as $r+(1-\frac{1}{s})\leq 1$) and $\trace[X]$ is linear in $X$, the function $\Psi(A,X)$ is jointly concave in $(A,X)$. The concavity of $A\mapsto \trace[(K^*A^rK)^s]$ then follows from \eqref{eqt:variational_CarlenLieb}. It is then natural to consider a similar variational formula for the $k$-trace that can also be shown by H\"older's inequality:
\begin{equation}\label{eqt:variational_CarlenLieb_ktrace}
\trace_k[(K^*A^rK)^s]^\frac{1}{k} = \sup_{X\in\mathbf{H}_n^{++}}\left\{s\trace_k[K^*A^rKX^{1-\frac{1}{s}}]^\frac{1}{k} - (s-1)\trace_k[X]^\frac{1}{k}\right\} = \sup_{X\in\mathbf{H}_n^{++}} \Psi_k(A,X).
\end{equation}
Note that for $s>1$, we have $-(s-1)<0$ and thus $- (s-1)\trace_k[X]^\frac{1}{k}$ is convex in $X$ since $\trace_k[\cdot]^\frac{1}{k}$ is concave (this sign of $-(s-1)$ does not give trouble in the trace case due to the linearity of trace). Therefore, even provided that $(A,X)\mapsto \trace_k[K^*A^rKX^{1-\frac{1}{s}}]^\frac{1}{k}$ is jointly concave, the function $\Psi_k(A,X)$ is not guaranteed to be jointly concave in $(A,X)$, and the variational formula in \eqref{eqt:variational_CarlenLieb_ktrace} fails to conclude the concavity of $A\mapsto \trace_k[(K^*A^rK)^s]^\frac{1}{k}$. As a consequence, we have not found a way to adapt this particular argument into a proof of \Cref{lem:GeneralEpstein}.

However, these variational approaches, especially Zhang's variational characterizations, are conveniently applicable to the derivation of the more general cases given \Cref{lem:GeneralEpstein}, as we have seen in the proof of \Cref{thm:GeneralLiebConcavity}. One can see that the $k$-trace version of H\"older's inequality plays an essential role in the process, which has suggested us to employ complex interpolation theories in the first place as interpolation of operators is based essentially on H\"older's inequality. In particular, we found the operator interpolation technique (\Cref{thm:SHInterpolation}) developed by Stein \cite{stein1956interpolation}(1956) nicely compatible to our problem. One can derive a variety of interpolation inequalities systematically by choosing $G(z)$ properly in inequality \eqref{eqt:SHInterpolation}. The choice of $G(z)$ we make, inspired by Lieb's original construction, gives the proof of our \Cref{lem:GeneralEpstein}. We remark that, \Cref{lem:KeyLemma} does not only apply to trace or $k$-trace, but also to any continuous matrix function $\phi:\mathbf{H}_n^+\rightarrow [0,+\infty)$ that satisfies H\"older's inequality and is unitary invariant in the sense that $\phi(U^*XU) = \phi(X)$ for arbitrary $X\in \mathbf{H}_n^+$ and $U\in \mathbb{C}^{n\times n}$ unitary. In fact, these two properties suffice to imply that $\log\circ \phi\circ \exp$ is convex on $\mathbf{H}_n$, which, along with a canonical majorization argument, will yield the inequality \eqref{eqt:KeyIneq} (see e.g. \cite{hiai2017generalized}).

\section{Other Results on $k$-trace}
\label{sec:OtherResults}

\subsection{Multivariate Golden--Thompson Inequality}

Sutter et al. \cite{sutter2017multivariate} recently applied the operator interpolation in \Cref{thm:SHInterpolation} to derive a multivariate extension of the Golden--Thompson (GT) inequality, which covers the original GT inequality and its three-matrix extension by Lieb \cite{LIEB1973267}. 

Following the ideas in \cite{sutter2017multivariate}, we may also use \Cref{lem:KeyLemma} to further extend the multivariate GT inequality to a $k$-trace form. In what follows, we write $\prod_{j=1}^mX^{(j)}$ for the matrix multiplication in the index order, i.e. $\prod_{j=1}^mX^{(j)}=X^{(1)}X^{(2)}\cdots X^{(m)}$. We first present an analog of Theorem 3.2 in \cite{sutter2017multivariate}. 

\begin{lemma}\label{lem:ktraceSBT}
For any $A^{(1)},A^{(2)},\dots,A^{(m)}\in \mathbf{H}_n^+$, $p\in[1,+\infty)$, $r\in(0,1]$, the following inequality holds:
\begin{equation}\label{eqt:ktraceSBT}
\log\phi\Big(\Big|\prod_{j=1}^m(A^{(j)})^r\Big|^\frac{p}{r}\Big) \leq \int_{-\infty}^{+\infty}dt\beta_r(t)\log\phi\Big(\Big|\prod_{j=1}^m(A^{(j)})^{1+it}\Big|^p\Big).
\end{equation}
\end{lemma}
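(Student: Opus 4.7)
\textbf{Proposal for proof of \Cref{lem:ktraceSBT}.} The plan is to apply the $k$-trace interpolation inequality \Cref{lem:KeyLemma} to the holomorphic family
\[
G(z) \;=\; \prod_{j=1}^{m} (A^{(j)})^{z}, \qquad z\in\mathcal{S},
\]
mimicking the construction Sutter et al.\ used in the normal-trace setting. First I would reduce to the case $A^{(j)}\in\mathbf{H}_n^{++}$ for every $j$; the general positive semidefinite case then follows by continuity at the end, since both sides of \eqref{eqt:ktraceSBT} depend continuously on the $A^{(j)}$ (with $0^r=0$ for $r>0$). Under this assumption each factor $(A^{(j)})^{z}=\exp(z\log A^{(j)})$ is entire in $z$, so $G$ is holomorphic on the interior of $\mathcal{S}$ and continuous on $\overline{\mathcal{S}}$, and $\|G(z)\|\leq\prod_{j=1}^{m}\|(A^{(j)})^{\mathrm{Re}(z)}\|$ is uniformly bounded on $\mathcal{S}$ because $\mathrm{Re}(z)\in[0,1]$. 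These are exactly the hypotheses of \Cref{lem:KeyLemma}.

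Next I would invoke \Cref{lem:KeyLemma} with the parameter choice $\theta=r$, $p_{0}=+\infty$ and $p_{1}=p$. The interpolation identity then gives
\[
\frac{1}{p_\theta}\;=\;\frac{1-r}{+\infty}+\frac{r}{p}\;=\;\frac{r}{p},
\qquad\text{hence}\qquad p_\theta=\frac{p}{r},
\]
which matches the left-hand side $\phi(|G(r)|^{p/r})$ of \eqref{eqt:ktraceSBT}. The decisive structural observation is that on the left edge of $\mathcal{S}$ each $(A^{(j)})^{it}=\exp(it\log A^{(j)})$ is unitary, so $G(it)^{*}G(it)$ collapses by the inside-out telescoping
\[
G(it)^{*}G(it) \;=\; (A^{(m)})^{-it}\cdots (A^{(1)})^{-it}(A^{(1)})^{it}\cdots (A^{(m)})^{it} \;=\; I_{n},
\]
whence $|G(it)|=I_{n}$ for every $t\in\mathbb{R}$. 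Consequently the left-edge integrand in \eqref{eqt:KeyIneq} equals $\log[\phi(I_n)^{(1-r)/p_0}]=\frac{1-r}{p_0}\log\binom{n}{k}$, which tends to $0$ as $p_{0}\to+\infty$.

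Putting this together, the right edge contributes $\beta_{r}(t)\log[\phi(|G(1+it)|^{p})^{r/p}]$ and \eqref{eqt:KeyIneq} reduces, in the limit $p_{0}\to+\infty$, to
\[
\log\!\bigl[\phi(|G(r)|^{p/r})^{r/p}\bigr] \;\leq\; \int_{-\infty}^{+\infty}\beta_{r}(t)\,\log\!\bigl[\phi(|G(1+it)|^{p})^{r/p}\bigr]\,dt.
\]
Multiplying through by $p/r$ and substituting $G(r)=\prod_j(A^{(j)})^{r}$ and $G(1+it)=\prod_j(A^{(j)})^{1+it}$ yields exactly \eqref{eqt:ktraceSBT}.

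The only subtle point I foresee is the endpoint $p_{0}=+\infty$, which lies outside the convenient reformulation \eqref{eqt:KeyIneq3} and must be handled via \eqref{eqt:KeyIneq} directly. The cleanest way is to keep $p_{0}$ finite and let $p_{0}\to\infty$ at the end: all other quantities in \eqref{eqt:KeyIneq} depend continuously on $p_{0}$ (note that $p_{\theta}$ and $p_{1}$ are unaffected by this limit under the stated choice), while $\phi(|G(it)|^{p_{0}})^{(1-r)/p_{0}}=\binom{n}{k}^{(1-r)/p_{0}}\to 1$. Aside from this bookkeeping and the final continuity extension from $\mathbf{H}_n^{++}$ to $\mathbf{H}_n^{+}$, the argument is a direct application of \Cref{lem:KeyLemma} to the canonical interpolating family $G(z)=\prod_j(A^{(j)})^{z}$.
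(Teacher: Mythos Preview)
Your proposal is correct and matches the paper's own proof essentially step for step: the same holomorphic family $G(z)=\prod_j (A^{(j)})^z$, the same application of \Cref{lem:KeyLemma} with $\theta=r$, $p_1=p$, $p_0\to\infty$, the same use of unitarity of $G(it)$ to kill the left-edge contribution, and the same continuity extension from $\mathbf{H}_n^{++}$ to $\mathbf{H}_n^{+}$. Two harmless slips worth noting: $\phi(I_n)=\binom{n}{k}^{1/k}$ rather than $\binom{n}{k}$, and $p_\theta$ does vary with $p_0$ through the interpolation constraint (it converges to $p/r$ as $p_0\to\infty$); neither affects the argument.
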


\begin{proof} 
Define 
\[G(z) = \prod_{j=1}^m(A^{(j)})^z,\quad z\in\mathcal{S},\]
where $\mathcal{S}$ is defined as in \Cref{lem:KeyLemma}. One can check that $G(z)$ is holomorphic in the interior of $\mathcal{S}$ and continuous on the boundary, and $\|G(z)\|$ is uniformly bounded on $\mathcal{S}$. We may first assume that each $A^{(j)}\in\mathbf{H}_n^{++}$ so that $(A^{(j)})^{it},t\in\mathbb{R}$ is unitary. The result for $A^{(j)}\in\mathbf{H}_n^+$ can be obtained by continuity. Thus $G(it)$ is unitary for all $t\in\mathbb{R}$, and $|G(it)|^{p_0}=I_n$ for all $p_0$. Thus we can apply inequality \eqref{eqt:KeyIneq2} with $\theta=r,p_0\rightarrow +\infty,p_1=p,p_\theta=\frac{p}{r}$ to obtain
\[\log\phi\big(|G(r)|^\frac{p}{r}\big)\leq \int_{-\infty}^{+\infty}dt\beta_r(t)\log\phi\big(|G(1+it)|^p\big),\]
which is exactly \eqref{eqt:ktraceSBT}. 
\end{proof}

Using a multivariate version of the Lie product formula, we immediately obtain the following from \Cref{lem:ktraceSBT}.

\begin{thm}[Multivariate Golden--Thompson Inequality for k-trace]\label{thm:ktraceMultiGT}
For any $A^{(1)},A^{(2)},\dots,A^{(m)}\in \mathbf{H}_n$, the following inequality holds:
\begin{equation}\label{eqt:ktraceMultiGT}
\log\phi\Big(\Big(\exp\big(\sum_{j=1}^mA^{(j)}\big)\Big)^p\Big)\leq \int_{-\infty}^{+\infty}dt\beta_0(t)\log\phi\Big(\Big|\prod_{j=1}^m\exp\big((1+it)A^{(j)}\big)\Big|^p\Big).
\end{equation}
\end{thm}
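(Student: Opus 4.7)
The plan is to reduce the theorem to \Cref{lem:ktraceSBT} by specializing to positive-definite matrices of the form $e^{A^{(j)}}$ and then letting the power parameter tend to zero. Since $e^{A^{(j)}}\in\mathbf{H}_n^{++}$ for any $A^{(j)}\in\mathbf{H}_n$, applying \Cref{lem:ktraceSBT} with $A^{(j)}$ replaced by $e^{A^{(j)}}$ gives, for every $r\in(0,1]$,
\[
\log\phi\Big(\Big|\prod_{j=1}^m e^{rA^{(j)}}\Big|^{p/r}\Big) \,\leq\, \int_{-\infty}^{+\infty}\beta_r(t)\log\phi\Big(\Big|\prod_{j=1}^m e^{(1+it)A^{(j)}}\Big|^p\Big)\,dt,
\]
and the task reduces to passing to the limit $r\searrow 0$ on both sides.

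On the left, set $X_r=\prod_{j=1}^m e^{rA^{(j)}}$ so that $|X_r|^{p/r}=(X_r^*X_r)^{p/(2r)}$. The product $X_r^*X_r=e^{rA^{(m)}}\cdots e^{rA^{(1)}}e^{rA^{(1)}}\cdots e^{rA^{(m)}}$ is an ordered string of $2m$ exponentials whose exponents sum to $2\sum_{j=1}^m A^{(j)}$. The multivariate Lie--Trotter product formula, valid for the continuous parameter $r$ via the Baker--Campbell--Hausdorff expansion $\log(X_r^*X_r)=2r\sum_{j=1}^m A^{(j)}+O(r^2)$, then yields $(X_r^*X_r)^{1/r}\to \exp\!\big(2\sum_{j=1}^m A^{(j)}\big)$ as $r\searrow 0$. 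Applying the continuous $p/2$-power gives $|X_r|^{p/r}\to\big(\exp(\sum_{j=1}^m A^{(j)})\big)^p$, and continuity of $\phi$ and $\log$ delivers the left-hand side of \eqref{eqt:ktraceMultiGT}.

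On the right, $\beta_r(t)\to\beta_0(t)$ pointwise while the integrand $\log\phi\big(|\prod_{j=1}^m e^{(1+it)A^{(j)}}|^p\big)$ does not depend on $r$. Writing $Y_t=\prod_{j=1}^m e^{(1+it)A^{(j)}}$, the Hermiticity of each $A^{(j)}$ gives $\|Y_t\|\leq\prod_{j=1}^m\|e^{A^{(j)}}\|$ and $\|Y_t^{-1}\|\leq\prod_{j=1}^m\|e^{-A^{(j)}}\|$ uniformly in $t$, so the singular values of $Y_t$ lie in a fixed compact subinterval of $(0,\infty)$ and $|\log\phi(|Y_t|^p)|$ is bounded by a constant independent of $t$. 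Combined with an elementary uniform bound of the form $\beta_r(t)\leq C/\cosh(\pi t)$ valid for all small $r$, the dominated convergence theorem delivers the limit of the right-hand side. The main technical point is precisely this dominated-convergence step; once it is in place, combining the two limits yields \eqref{eqt:ktraceMultiGT} and completes the proof.
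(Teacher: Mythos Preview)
Your argument is correct and follows essentially the same route as the paper: substitute $e^{A^{(j)}}$ into \Cref{lem:ktraceSBT} and let $r\searrow 0$, using the multivariate Lie product formula on the left and passing to the limit under the integral on the right. You have in fact supplied more detail than the paper does on the dominated-convergence step and on writing $|X_r|^{p/r}$ as a $2m$-fold product before invoking Lie--Trotter, both of which are exactly the justifications the paper's terse proof leaves implicit.
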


\begin{proof}
We only need to replace $A^{(j)}$ in inequality \eqref{eqt:ktraceSBT} by $\exp(A^{(j)})$, and take $r\rightarrow0$. Since each $\|\exp((1+it)A^{(j)})\|=\|\exp(A^{(j)})\exp(itA^{(j)})\|=\|\exp(A^{(j)})\|$ is uniformly bounded for all $t\in\mathbb{R}$, the right hand side of \eqref{eqt:ktraceSBT} then becomes the right hand side of \eqref{eqt:ktraceMultiGT}. By a multivariate Lie product formula (see e.g. \cite{sutter2017multivariate})
\[\lim_{r\searrow0}\Big(\exp(rX^{(1)})\exp(rX^{(2)})\cdots\exp(rX^{(m)})\Big)^\frac{1}{r}=\exp\big(\sum_{i=1}^mX^{(j)}\big),\]
the left hand side of \eqref{eqt:ktraceSBT} then becomes 
\begin{align*}
\lim_{r\searrow0}\log\phi\Big(\Big|\prod_{j=1}^m\exp\big(rA^{(j)}\big)\Big|^\frac{p}{r}\Big) =&\ \lim_{r\searrow0}\log\phi\Big(\Big(\prod_{j=1}^m\exp\big(rA^{(m-j+1)}\big)\prod_{j=1}^m\exp\big(rA^{(j)}\big)\Big)^\frac{p}{2r}\Big)\\
=&\ \log\phi\Big(\Big(\exp\big(\sum_{j=1}^m2A^{(j)}\big)\Big)^\frac{p}{2}\Big)\\
=&\ \log\phi\Big(\Big(\exp\big(\sum_{j=1}^mA^{(j)}\big)\Big)^p\Big).
\end{align*}{}
\end{proof}

If we choose $m = 2,p=2$ in \Cref{thm:ktraceMultiGT} and replace $A^{(j)}$ by $\frac{1}{2}A^{(j)}$, the right hand side of inequality \eqref{eqt:ktraceMultiGT} is independent of $t$ due to the cyclicity of $\phi$. We then recover the $k$-trace GT inequality 
\[\phi\big(\exp(A^{(1)}+A^{(2)})\big)\leq \phi\big(\exp(A^{(1)})\exp(A^{(2)})\big),\]
that we have obtained in \Cref{lem:ktraceGT}. If we choose $m = 3,p=2$ in \Cref{thm:ktraceMultiGT} and again replace $A^{(j)}$ by $\frac{1}{2}A^{(j)}$, we have 
\begin{align*}
&\ \log\phi\big(\exp(A^{(1)}+A^{(2)}+A^{(3)})\big)\\
\leq&\ \int_{-\infty}^{+\infty}dt\beta_0(t)\log\phi\Big( \exp(A^{(1)})\exp\big(\frac{1+it}{2}A^{(2)}\big)\exp(A^{(3)})\exp\big(\frac{1-it}{2}A^{(2)}\big) \Big)\\
\leq&\ \log\phi\left( \int_{-\infty}^{+\infty}dt\beta_0(t)\exp(A^{(1)})\exp\big(\frac{1+it}{2}A^{(2)}\big)\exp(A^{(3)})\exp\big(\frac{1-it}{2}A^{(2)}\big) \right)
\end{align*}
The second inequality above is due to concavity of logarithm and $\phi$. If we define 
\[\mathcal{T}_A[B]=\int_0^{+\infty}dt(A+tI_n)^{-1}B(A+tI_n)^{-1},\quad A,B\in\mathbf{H}_n^{++},\] 
and use Lemma 3.4 in \cite{sutter2017multivariate} that 
\[\int_0^{+\infty}dt(A^{-1}+tI_n)^{-1}B(A^{-1}+tI_n)^{-1} = \int_{-\infty}^{+\infty}dt\beta_0(t)A^\frac{1+it}{2}BA^\frac{1-it}{2}, \quad A,B\in\mathbf{H}_n^{++},\]
we then further obtain 
\[\phi\big(\exp(A^{(1)}+A^{(2)}+A^{(3)})\big) \leq \phi\big(\exp(A^{(1)})\mathcal{T}_{\exp(-A^{(2)})}[\exp(A^{(3)})]\big).\]
This can be seen as a $k$-trace generalization of Lieb's \cite{LIEB1973267} three-matrix extension of the GT inequality that $\trace[\exp(A+B+C)]\leq \trace[\exp(A)\mathcal{T}_{\exp(-B)}[\exp(C)]].$

\subsection{Monotonicity Preserving and Concavity Preserving}
As mentioned in the review part of \Cref{sec:Main}, Loewner's theorem says that a real-valued function $f$ on $(0,+\infty)$ is operator monotone if and only if it admits an analytic continuation to a Herglotz function. Therefore, the extension of a monotone scalar function to Hermitian matrices is not necessarily operator monotone. For instance, let $f(x)=x^3$, and 
\[A=\left(\begin{array}{cc} 1&0\\0&0\end{array}\right),\quad B=\left(\begin{array}{cc} 2&-1\\-1&1\end{array}\right),\]
then $f$ is monotone increasing, and $A\preceq B$. But neither $A^3\preceq B^3$ nor $A^3\succeq B^3$ is true. However, a composition with trace will preserve the monotonicity. That is, $\trace[f(A)]$ is monotone increasing (or decreasing) in $A$ with respect to Loewner partial order, if $f$ is monotone increasing (or decreasing). Likewise, if $f$ is concave (or convex), its extension to Hermitian matrices is not necessarily operator concave (or convex), but $A\mapsto\trace[f(A)]$ is still concave (or convex). One can see Theorem 2.10 in \cite{carlen2010trace}. This means that, some partial information like trace may preserve monotonicity and concavity. In fact, we will show that for any integer $k$ the partial information $\phi(\cdot)=\trace_k[\cdot]^\frac{1}{k}$ also preserves monotonicity and concavity of scalar functions. But we need to restrict to $f$ that only takes values in $[0,+\infty)$. We need the following lemma for proving concavity preserving.

\begin{lemma}\label{lem:DiagonalMajoring}
For any $A\in \mathbf{H}_n^+$, let $\diag(A)$ denote the diagonal part of $A$, then 
\[\phi(A)\leq\phi(\diag(A)) .\]
\end{lemma}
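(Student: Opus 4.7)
The plan is to exploit the second definition \eqref{def:k-trace2} of the $k$-trace as a sum of determinants of principal submatrices, which makes the comparison with $\diag(A)$ reduce to a classical inequality applied term by term. Writing $S=\{i_1,\dots,i_k\}\subseteq\{1,\dots,n\}$ for a generic $k$-element index set, we have
\[
\trace_k[A] \;=\; \sum_{|S|=k}\det\bigl[A_{(S,S)}\bigr],\qquad
\trace_k[\diag(A)] \;=\; \sum_{|S|=k}\prod_{i\in S}A_{ii},
\]
since the eigenvalues of $\diag(A)$ are precisely the diagonal entries $A_{11},\dots,A_{nn}$.

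Next I would invoke Hadamard's determinant inequality: for every positive semidefinite matrix $M\in\mathbf{H}_k^+$, one has $\det[M]\leq \prod_{i=1}^k M_{ii}$. Because $A\in\mathbf{H}_n^+$, each principal submatrix $A_{(S,S)}$ is itself positive semidefinite, and so
\[
\det\bigl[A_{(S,S)}\bigr] \;\leq\; \prod_{i\in S}A_{ii}.
\]
Summing these inequalities over all $\binom{n}{k}$ index sets $S$ immediately yields $\trace_k[A]\leq \trace_k[\diag(A)]$. Raising both sides to the power $1/k$ (both quantities are nonnegative by monotonicity of $\trace_k$ on $\mathbf{H}_n^+$) gives $\phi(A)\leq\phi(\diag(A))$, as desired.

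There is essentially no obstacle: Hadamard's inequality is standard and the extension to the $k$-trace is automatic from the principal-submatrix formulation. One small point worth noting is that this argument genuinely uses the assumption $A\in\mathbf{H}_n^+$ (through positive semidefiniteness of each principal submatrix); without it, Hadamard's inequality fails and the conclusion is false in general. An alternative route, which I mention only to contrast, would be to combine the Schur--Horn theorem (the diagonal of $A$ is majorized by its eigenvalues) with the Schur-concavity of the elementary symmetric polynomial $e_k$ on $\mathbb{R}_+^n$; this gives the same conclusion but requires two nontrivial ingredients rather than a single line of Hadamard.
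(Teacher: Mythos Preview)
Your proof is correct. It differs from the paper's main argument, which is a short randomization trick: one takes a random diagonal sign matrix $D$ with independent Rademacher entries, observes that $\phi(A)=\phi(DAD)$ (since $D^2=I_n$) and $\mathbb{E}[DAD]=\diag(A)$, and then applies the concavity of $\phi$ (already established in \Cref{prop:ktrace}(iv)) to conclude $\phi(A)=\mathbb{E}\,\phi(DAD)\leq\phi(\mathbb{E}[DAD])=\phi(\diag(A))$. That proof uses only the concavity of $\phi$ and nothing about determinants, which is appealing because it keeps the argument internal to the properties of $\phi$ already developed. Your route via the principal-submatrix formula \eqref{def:k-trace2} together with Hadamard's inequality is more elementary in the sense that it avoids concavity of $\phi$ entirely; in fact the paper sketches exactly this argument immediately after its main proof (deriving Hadamard from majorization first), so you have independently rediscovered the paper's alternative proof.
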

\begin{proof}
Let $D$ be a $n\times n$ diagonal matrix, whose diagonal entries follow independent Rademacher distributions, i.e.
\[D_{ii} = \left\{\begin{array}{rl}
 1, & \text{with prob. } 0.5, \\
 -1, & \text{with prob. } 0.5.
 \end{array}\right.\]
Since $D^2\equiv I_n$, we have $\phi(A)=\phi(DAD)$.  Also notice that $\mathbb{E}[DAD] = \diag(A)$, since $\mathbb{E}[D_{ii}D_{jj}] = \delta_{ij}$. Then by concavity of $\phi$, we have
\[\phi(A) = \mathbb{E}\phi(DAD)\leq \phi(\mathbb{E}[DAD])=\phi(\diag(A)).\]
\end{proof}

\Cref{lem:DiagonalMajoring} can be proved, instead, using the concept of Majorization. Let $\bm{a}=\{a_i\}_i^n$ and $\bm{b}=\{b_i\}_i^n$ be two sequences, both in descending order, i.e. $a_1\geq a_2\geq \cdots\geq a_n$, $b_1\geq b_2\geq \cdots\geq b_n$. We say $\bm{b}$ majorizes $\bm{a}$, denoted by $\bm{b}\succeq \bm{a}$, if  
\[\sum_{i=1}^kb_i\geq \sum_{i=1}^ka_i,\quad 1\leq k\leq n,\]
and the equality holds for $k=n$. It is not hard to show that, if $\bm{b}\succeq \bm{a}$, and $\bm{a},\bm{b}\subset [0,+\infty)$, then $\prod_{i=1}^db_i\leq \prod_{i=1}^da_i$. Now for any $A\in \mathbf{H}_n^+$, let $\bm{\lambda}=\{\lambda_i\}_{i=1}^n$ and $\bm{a}=\{a_i\}_i^n$ be the eigenvalues and the diagonal entries of $A$ respectively, both in descending order. Then since 
\[\sum_{i=1}^k\lambda_i=\max_{\begin{subarray}{c}\{v_i\}_{i=1}^k\subset \mathbb{C}^n\\ v^*_iv_j = \delta_{ij}\end{subarray}} \sum_{i=1}^kv^*_iAv_i \geq \sum_{i=1}^ke^*_iAe_i = \sum_{i=1}^ka_i,\quad 1\leq k<n\]
and $\sum_{i=1}^n\lambda_i =\trace[A] = \sum_{i=1}^na_i$, we have $\bm{\lambda}\succeq\bm{a}$. Therefore we know that 
\[\det[A]=\prod_{i=1}^n\lambda_i\leq \prod_{i=1}^na_i=\det[\diag(A)].\]
Then using the equivalent definition \eqref{def:k-trace2} of $k$-trace, we have that, for any $1\leq k\leq n$,
\begin{align*}
\trace_k[A] =&\ \sum_{1\leq i_1<i_2<\cdots<i_k\leq n}\det[A_{(i_1\cdots i_k,i_1\cdots i_k)}]\\
\leq&\ \sum_{1\leq i_1<i_2<\cdots<i_k\leq n}A_{i_1i_1}A_{i_2i_2}\cdots A_{i_ki_k},\\
=&\ \trace_k[\diag(A)].
\end{align*}

\begin{thm}\label{thm:Preserving}
Given a function $f:[0,+\infty)\rightarrow [0,+\infty)$, if $f$ is monotone increasing (or decreasing) as a scalar function, then $\phi(f(\cdot))$ is monotone increasing (or decreasing) on $\mathbf{H}_n^+$, in the sense that $\phi(f(A))\geq \phi(f(B))$(or $\phi(f(A))\leq \phi(f(B))$) if $A,B\in \mathbf{H}_n^+,A\succeq B$; if $f$ is concave as a scalar function, then $\phi(f(\cdot))$ is concave on $\mathbf{H}_n^+$. 
\end{thm}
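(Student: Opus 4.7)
The plan is to separate the monotonicity and concavity parts. For monotonicity, the reduction to eigenvalues is direct: if $A \succeq B$ in $\mathbf{H}_n^+$, Weyl's monotonicity theorem yields $\lambda_i(A) \geq \lambda_i(B)$ for the eigenvalues in decreasing order. Since $f$ is monotone (say increasing) and valued in $[0,+\infty)$, every factor in each term of the expansion
\[
\trace_k[f(A)] = \sum_{i_1 < \cdots < i_k} f(\lambda_{i_1}(A)) \cdots f(\lambda_{i_k}(A))
\]
dominates the corresponding factor for $B$ while remaining non-negative. Taking the $1/k$-th root preserves the direction of the inequality, and the decreasing case is symmetric.

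For concavity, the plan is to transport everything to the eigenbasis of the convex combination and reduce to a scalar inequality. Given $A, B \in \mathbf{H}_n^+$, $\tau \in [0,1]$, and $C = \tau A + (1-\tau) B$, diagonalize $C = W \Lambda W^*$ with $\Lambda = \diag(\mu_1, \ldots, \mu_n)$. Set $\tilde{A} = W^* A W$ and $\tilde{B} = W^* B W$, and write $a_i = [\tilde{A}]_{ii}$, $b_i = [\tilde{B}]_{ii}$, so that $\tau a_i + (1-\tau) b_i = \mu_i$. The key estimate I would establish is
\[
\phi(f(A)) \leq \phi\big(\diag(f(a_1), \ldots, f(a_n))\big),
\]
with an analogous bound for $B$, while $\phi(f(C)) = \phi\big(\diag(f(\mu_1), \ldots, f(\mu_n))\big)$ holds exactly since $C$ is diagonalized by $W$ and $\phi$ is unitarily invariant.

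The estimate would unfold in three steps. First, unitary invariance of $\phi$ together with $f(W^* A W) = W^* f(A) W$ gives $\phi(f(A)) = \phi(f(\tilde{A}))$. Second, since $f \geq 0$, the matrix $f(\tilde{A})$ lies in $\mathbf{H}_n^+$, so \Cref{lem:DiagonalMajoring} applies and yields $\phi(f(\tilde{A})) \leq \phi(\diag f(\tilde{A}))$. Third, expanding $\tilde{A} = \sum_j \lambda_j v_j v_j^*$ in its eigenbasis exhibits $[f(\tilde{A})]_{ii} = \sum_j f(\lambda_j)|v_{ji}|^2$ as a convex combination (with weights $|v_{ji}|^2$ summing to $1$) whose corresponding $\lambda$-weighted average is exactly $a_i$. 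Jensen's inequality for concave $f$ then gives $[f(\tilde{A})]_{ii} \leq f(a_i)$, so $\diag f(\tilde{A}) \preceq \diag(f(a_i))$ (both in $\mathbf{H}_n^+$), and monotonicity of $\phi$ from \Cref{prop:ktrace} (iii) completes the estimate.

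To finish, I would observe that the scalar map $h(x) := \phi\big(\diag(f(x_1), \ldots, f(x_n))\big)$ is concave on $\mathbb{R}_+^n$: restricting \Cref{prop:ktrace} (iv) to diagonal matrices makes $x \mapsto \phi(\diag x)$ concave and monotone on $\mathbb{R}_+^n$, and composing with the coordinate-wise concave, non-negative $f$ preserves concavity. Then
\[
\tau \phi(f(A)) + (1-\tau) \phi(f(B)) \leq \tau h(a) + (1-\tau) h(b) \leq h\big(\tau a + (1-\tau) b\big) = h(\mu) = \phi(f(C)).
\]
The main obstacle I anticipate is keeping the direction of every inequality consistent: the chain depends essentially on the hypothesis $f \geq 0$ at several points --- to keep $f(\tilde{A}) \in \mathbf{H}_n^+$ so that \Cref{lem:DiagonalMajoring} is applicable, to preserve the direction of monotonicity of $\phi$ on diagonal matrices, and to ensure that $\phi \circ \diag$ is monotone on the relevant orthant. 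The decisive design choice is to diagonalize the convex combination $C$ (rather than $A$ or $B$ individually), since this is what produces the common linear parameterization $\mu_i = \tau a_i + (1-\tau) b_i$ that drives the final scalar concavity step.
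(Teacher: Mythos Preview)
Your proposal is correct and follows essentially the same approach as the paper: both arguments diagonalize the convex combination $C$, use unitary invariance of $\phi$, apply \Cref{lem:DiagonalMajoring} to pass from $f(\tilde A)$ to its diagonal, invoke Jensen's inequality for the scalar concave $f$ to compare $[f(\tilde A)]_{ii}$ with $f(a_i)$, and finish with the concavity and monotonicity of $\phi$ from \Cref{prop:ktrace}. The only cosmetic difference is that you package the last two ingredients into the concavity of the auxiliary function $h(x)=\phi(\diag(f(x_1),\dots,f(x_n)))$ and bound $\phi(f(A))\le h(a)$ first, whereas the paper runs a single descending chain from $\phi(f(C))$; the underlying inequalities are identical.
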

\begin{proof}
We first prove the monotonicity preserving property of $\phi$. For any matrix $A\in \mathbf{H}_n$, we denote by $\lambda_i(A)$ the $i_{\text{th}}$ largest eigenvalue of $A$. For any $A,B\in\mathbf{H}_n^+$, if $A\succeq B$, then $\lambda_i(A)\geq \lambda_i(B), 1\leq i\leq n$. Therefore if $f$ is monotone increasing, we immediately have 
\[\lambda_i(f(A)) = f(\lambda_i(A)) \geq f(\lambda_i(B)) = \lambda_i(f(B)),\quad 1\leq i\leq n,\]
and thus $\phi(f(A))\geq \phi(f(B))$ by definition $\phi$. Similarly, if $f$ is  monotone decreasing, we have
\[\lambda_i(f(A)) = f(\lambda_{n-i+1}(A)) \leq f(\lambda_{n-i+1}(B)) = \lambda_i(f(B)),\quad 1\leq i\leq n,\]
and thus $\phi(f(A))\leq \phi(f(B))$. 

Next we prove the concavity preserving property of $\phi$. Given any $A,B\in \mathbf{H}_n^+$, and any $\tau\in [0,1]$, we define $C=\tau A+(1-\tau)B$. Let $U=[u_1,u_2,\cdots,u_n]$ be a unitary matrix such that the columns are all eigenvectors of $C$, then 
\[U^*f(C)U = f(U^*CU) = f(\diag(U^*CU)).\] 
If $f$ is concave, then 
\[f(u_i^*Cu_i) = f(\tau u_i^*Au_i +(1-\tau)u_i^*Bu_i)\geq \tau f(u_i^*Au_i) + (1-\tau)f(u_i^*Bu_i),\quad 1\leq i\leq n,\]
and thus $f(\diag(U^*CU))\succeq \tau f(\diag(U^*AU)) + (1-\tau)f(\diag(U^*BU))$. Further, for any unit vector $u\in \mathbb{C}^n$, we have
\begin{align*}
f(u^*Au) =&\ f\Big(\sum_{i=1}^n\lambda_i(A)u^*v_iv_i^*u\Big) = f\Big(\sum_{i=1}^n\lambda_j(A)|v_i^*u|^2\Big)\\
\geq&\  \sum_{i=1}^n|v_i^*u|^2f\big(\lambda_i(A)\big) = u^*\Big(\sum_{i=1}^nf\big(\lambda_i(A)\big)v_iv_i^*\Big)u = u^*f(A)u,
\end{align*}
where $v_1,v_2,\cdots,v_n$ are all eigenvectors of $A$, and we have used that $\sum_{i=1}^n|v_i^*u|^2 = \|u\|_2^2=1$. Then we have $f(\diag(U^*AU))\succeq \diag(U^*f(A)U)$. Similar, we have $f(\diag(U^*BU))\succeq \diag(U^*f(B)U)$. Finally, we can compute
\begin{align*}
\phi\big(f(C)\big) =&\ \phi\big(U^*f(C)U\big) \\
=&\ \phi\big(f(\diag(U^*CU))\big) \\
\geq&\ \phi\big(\tau f(\diag(U^*AU)) + (1-\tau)f(\diag(U^*BU))\big) \\
\geq&\ \tau \phi\big(f(\diag(U^*AU))\big) + (1-\tau)\phi\big(f(\diag(U^*BU))\big) \\
\geq&\ \tau \phi\big(\diag(f(U^*AU))\big) + (1-\tau)\phi\big(\diag(f(U^*BU))\big)\\
\geq&\ \tau \phi\big(f(U^*AU)\big) + (1-\tau)\phi\big(f(U^*BU)\big)\\
=&\ \tau \phi\big(U^*f(A)U\big) + (1-\tau)\phi\big(U^*f(B)U\big)\\
=&\ \tau \phi\big(f(A)\big) + (1-\tau)\phi\big(f(B)\big).
\end{align*}
We have used \Cref{lem:DiagonalMajoring} for the last inequality above. Therefore $\phi(f(\cdot))$ is concave on $\mathbf{H}_n^+$.
\end{proof}

\section*{Appendix}

\begin{appendix}
\renewcommand{\thesection}{\Alph{section}}

\section{Mixed Discriminant}
\label{Apdix:MixedDiscriminant}
The mixed discriminant $D(A^{(1)},A^{(2)},\cdots,A^{(n)})$ of $n$ matrices $A^{(1)},A^{(2)},\cdots,A^{(n)}\in \mathbb{C}^{n\times n}$ is defined as 
\begin{equation}
D(A^{(1)},A^{(2)},\cdots,A^{(n)}) = \frac{1}{n!} \sum_{\sigma\in S_n}\det\left[\begin{array}{cccc}
 A_{11}^{(\sigma(1))} & A_{12}^{(\sigma(2))} & \cdots & A_{1n}^{(\sigma(n))} \\
 A_{21}^{(\sigma(1))} & A_{22}^{(\sigma(2))} & \cdots & A_{1n}^{(\sigma(n))} \\
 \vdots & \vdots & \ddots & \vdots\\
 A_{n1}^{(\sigma(1))} & A_{n2}^{(\sigma(2))} & \cdots & A_{nn}^{(\sigma(n))}
\end{array}\right],
\label{eqt:mixdiscriminant}
\end{equation}
where $S_n$ denotes the symmetric group of degree $n$. We here list some basic facts about mixed discriminants. For more properties of mixed discriminants, one may refer to \cite{bapat1989mixed,panov1987some}.
\begin{itemize} 
\item Symmetry: $D(A^{(1)},A^{(2)},\cdots,A^{(n)})$ is symmetric in $A^{(1)},A^{(2)},\cdots,A^{(n)}$, i.e. 
\[D(A^{(1)},A^{(2)},\cdots,A^{(n)}) = D(A^{\sigma(1)},A^{\sigma(2)},\cdots,A^{\sigma(n)}),\quad \sigma\in S_n.\]
\item Multilinearity: for any $\alpha,\beta\in \mathbb{R}$, 
\[D(\alpha A+\beta B,A^{(2)},\cdots,A^{(n)}) = \alpha D(A,A^{(2)},\cdots,A^{(n)}) + \beta D(B,A^{(2)},\cdots,A^{(n)}).\]
\item Positiveness \cite{bapat1989mixed}: If $A^{(1)},A^{(2)},\cdots,A^{(n)}\in \mathbf{H}_n^+$, then $D(A^{(1)},A^{(2)},\cdots,A^{(n)})\geq0$; if $A^{(1)},A^{(2)},\cdots,A^{(n)}\in \mathbf{H}_n^{++}$, then $D(A^{(1)},A^{(2)},\cdots,A^{(n)})>0$.
\end{itemize}

The relation between the mixed discriminant and $\trace_k$ is obvious. If we calculate the mixed discriminant for $k$ copies of $A\in\mathbb{C}^{n\times n}$ and $n-k$ copies of $I_n$, we can find that 
\begin{align}
D(\underbrace{A,\cdots,A}_{k},\underbrace{I_n,\cdots,I_n}_{n-k}) = \binom{n}{k}^{-1}\sum_{1\leq i_1<i_2<\cdots<i_k\leq n}\det[A_{(i_1\cdots i_k,i_1\cdots i_k)}] = \binom{n}{k}^{-1}\trace_k[A].
\end{align}
This is why the mixed discriminant plays an important role in the proof of our main theorem. In particular, we will need the following inequality on mixed discriminant by Alexandrov \cite{aleksandrov1938theory}.

\begin{thm}
\label{thm:AFinequality}
(\textbf{Alexandrov--Fenchel Inequality for Mixed Discriminants})
For any $B\in \mathbf{H}_n$ and any $A,\underbrace{A^{(3)},\cdots,A^{(n)}}_{n-2}\in \mathbf{H}_n^{++}$, we have
\begin{equation}
D(A,B,A^{(3)},\cdots,A^{(n)})^2\geq D(A,A,A^{(3)},\cdots,A^{(n)})D(B,B,A^{(3)},\cdots,A^{(n)}),
\label{eqt:AFinequality}
\end{equation}
with equality if and only if $B=\lambda A$ for some $\lambda\in \mathbb{R}$.
\end{thm}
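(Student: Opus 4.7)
The plan is to interpret \eqref{eqt:AFinequality} as a reverse Cauchy--Schwarz inequality for a Lorentzian bilinear form. Fix $A^{(3)},\ldots,A^{(n)}\in\mathbf{H}_n^{++}$ and define, on the real vector space $\mathbf{H}_n$,
\[Q(X,Y) \ = \ D(X,Y,A^{(3)},\ldots,A^{(n)}).\]
Multilinearity and the permutation symmetry of the mixed discriminant make $Q$ a real symmetric bilinear form, and the positiveness property gives $Q(A,A)>0$ for any $A\in\mathbf{H}_n^{++}$. The target inequality reads $Q(A,B)^2\ge Q(A,A)\,Q(B,B)$, which is exactly the reverse Cauchy--Schwarz inequality for vectors of positive $Q$-value. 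The problem thus reduces to showing that $Q$ has signature $(1,n^2-1)$ on $\mathbf{H}_n\cong\mathbb{R}^{n^2}$.

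The first step is to compute the signature at the anchor point $A^{(3)}=\cdots=A^{(n)}=I_n$. Polarizing the characteristic polynomial $\det(rI_n+tX+sY)=\sum_{k=0}^{n}\trace_k[tX+sY]\,r^{n-k}$ and extracting the coefficient of $tsr^{n-2}$ yields
\[Q(X,Y) \ = \ \frac{1}{n(n-1)}\bigl(\trace[X]\,\trace[Y]-\trace[XY]\bigr).\]
Decomposing $\mathbf{H}_n=\mathbb{R}I_n\oplus\{X\in\mathbf{H}_n:\trace[X]=0\}$ and using that $X\mapsto\trace[X^2]$ is positive definite on the trace-zero summand, one reads off exactly one positive direction (along $I_n$) and $n^2-1$ negative directions, confirming the claimed signature. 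The second step is to upgrade to arbitrary parameters via continuous deformation: $(\mathbf{H}_n^{++})^{n-2}$ is path-connected, and along any smooth path the signature of $Q$ can change only when $Q$ becomes degenerate, so once non-degeneracy is secured throughout the parameter space the signature must remain $(1,n^2-1)$.

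With the Lorentzian signature in hand, the inequality follows quickly. Consider $f(t)=Q(A+tB,A+tB)$; its value at $t=0$ is $Q(A,A)>0$, and its discriminant equals $4\bigl(Q(A,B)^2-Q(A,A)\,Q(B,B)\bigr)$. In signature $(1,n^2-1)$ no two-dimensional subspace of $\mathbf{H}_n$ can lie entirely in the positive cone $\{X:Q(X,X)>0\}$, so unless $A$ and $B$ are linearly dependent the line $\{A+tB:t\in\mathbb{R}\}$ must meet $\{X:Q(X,X)\le 0\}$; this forces $f$ to have a real root, whence its discriminant is non-negative. The equality case $B=\lambda A$ comes out of tracking when that line is tangent to the light cone.

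The main obstacle is establishing non-degeneracy of $Q$ throughout $(\mathbf{H}_n^{++})^{n-2}$, which is precisely what locks the signature in place under continuous deformation. I would attempt this by induction on $n$, using a spectral splitting of (say) $A^{(n)}$ together with the strict positiveness property to reduce the vanishing of $Q(X,\cdot)$ to an analogous statement for mixed discriminants in one lower dimension and then derive a contradiction. This inductive non-degeneracy argument is the technical heart of any proof of Alexandrov--Fenchel and is where the real work lies; the remaining pieces of the argument above are essentially formal once it is in place.
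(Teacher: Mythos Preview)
The paper does not prove this theorem; it is quoted in Appendix~A with a pointer to Alexandrov's original 1938 paper for the real symmetric case and to \cite{2017arXiv171000520L} for the Hermitian extension. So there is no in-paper argument to compare against, and your proposal should be read on its own merits.

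Your plan is the classical Alexandrov strategy and is sound. The bilinear form $Q(X,Y)=D(X,Y,A^{(3)},\ldots,A^{(n)})$ is correctly set up; the anchor computation $Q(X,Y)=\tfrac{1}{n(n-1)}\bigl(\trace[X]\trace[Y]-\trace[XY]\bigr)$ at $A^{(3)}=\cdots=A^{(n)}=I_n$ is right and gives signature $(1,n^2-1)$; and once that signature is known everywhere, the inequality and its equality case follow cleanly. (Writing $B=\lambda A+C$ with $\lambda=Q(A,B)/Q(A,A)$ and $C\in A^{\perp_Q}$ gives $Q(A,B)^2-Q(A,A)Q(B,B)=-Q(A,A)\,Q(C,C)\ge 0$, with equality iff $C=0$, since $Q$ is negative definite on $A^{\perp_Q}$; this is a slightly crisper version of your discriminant argument.) The continuity-of-signature reduction to non-degeneracy on all of $(\mathbf{H}_n^{++})^{n-2}$ is also correct.

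On the crux you flag: your inductive sketch points in the right direction but is underspecified. In the standard treatments the induction hypothesis is not bare non-degeneracy but the full Alexandrov--Fenchel statement (inequality \emph{and} equality characterization) in dimension $n-1$; the equality case at level $n-1$ is exactly what rules out a nontrivial kernel at level $n$. Concretely, one tests $Q(X,\cdot)$ against rank-one matrices $yy^*$, using the identity that expresses $D(X,yy^*,A^{(3)},\ldots,A^{(n)})$ as an $(n-1)$-dimensional mixed discriminant of the compressions of $X,A^{(3)},\ldots,A^{(n)}$ to $y^\perp$; the inductive equality case then forces these compressions of $X$ to be scalar multiples of those of $A$, and varying $y$ pins $X$ down. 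Your ``spectral splitting of $A^{(n)}$'' is morally this reduction, but to turn the plan into a proof you will need to articulate it along these lines.
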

This theorem originally applied to real symmetric matrices when established in 1937. A proof of its extension to Hermitian matrices can be found in \cite{li2017alexandrov}. By continuity, inequality \eqref{eqt:AFinequality} can extend to the case that $A,A^{(3)},\cdots,A^{(n)}\in \mathbf{H}_n^+$, but the necessity of the condition for equality is no longer valid.

Repeatedly applying the Alexandrov--Fenchel inequality \eqref{eqt:AFinequality} grants us the following corollary.
\begin{corollary}
\label{cor:generalAFinequality}
For any $0\leq l\leq k\leq n$, and any $A,B,\underbrace{A^{(k+1)},\cdots,A^{(n)}}_{n-k}\in \mathbf{H}_n^+$, we have
\begin{align}
&\ D(\underbrace{A,\cdots,A}_{l},\underbrace{B,\cdots,B}_{k-l},\underbrace{A^{(k+1)},\cdots,A^{(n)}}_{n-k})^k\\
\geq&\ D(\underbrace{A,\cdots,A}_{k},\underbrace{A^{(k+1)},\cdots,A^{(n)}}_{n-k})^l\cdot D(\underbrace{B,\cdots,B}_{k},\underbrace{A^{(k+1)},\cdots,A^{(n)}}_{n-k})^{k-l}. \nonumber
\end{align}
\end{corollary}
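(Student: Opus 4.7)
The plan is to derive this inequality from \Cref{thm:AFinequality} by establishing \emph{log-concavity} of the mixed discriminant in the ``mixing index'' $l$, and then applying the classical chord inequality for concave sequences. Fix the matrices $A^{(k+1)},\ldots,A^{(n)}$ and introduce the one-parameter family
\[ f(l) \;=\; D\bigl(\underbrace{A,\cdots,A}_{l},\,\underbrace{B,\cdots,B}_{k-l},\,A^{(k+1)},\cdots,A^{(n)}\bigr),\qquad 0\leq l\leq k, \]
which is non-negative by the positiveness property of $D$ recalled in \Cref{Apdix:MixedDiscriminant}. The target inequality rewrites cleanly as $f(l)^{k}\geq f(0)^{k-l}f(k)^{l}$.

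First I would reduce to the positive definite case by replacing each of $A,B,A^{(k+1)},\ldots,A^{(n)}$ with its $\varepsilon$-perturbation $(\,\cdot\,)+\varepsilon I_n$, so that all arguments lie in $\mathbf{H}_n^{++}$ and, by the strict positiveness property, $f(l)>0$ for every $l$. Multilinearity of $D$ makes $f(l)$ a polynomial in the matrix entries, so continuity lets me recover the $\mathbf{H}_n^{+}$ statement by sending $\varepsilon\searrow 0$ at the end.

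The key step is the log-concavity $f(l)^{2}\geq f(l-1)\,f(l+1)$ for $1\leq l\leq k-1$. For each such $l$, I apply \Cref{thm:AFinequality} with its two distinguished slots occupied by $A$ and $B$ and with its remaining $n-2$ slots occupied by $l-1$ copies of $A$, $k-l-1$ copies of $B$, and $A^{(k+1)},\ldots,A^{(n)}$. Using symmetry of $D$ to reorder the arguments, the Alexandrov-Fenchel inequality becomes exactly
\[ f(l)^{2} \;\geq\; f(l-1)\,f(l+1),\qquad 1\leq l\leq k-1, \]
so $l\mapsto \log f(l)$ is a concave sequence on $\{0,1,\ldots,k\}$. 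The only care needed here is that Alexandrov-Fenchel requires the $n-2$ auxiliary arguments to be positive definite, which is why the $\varepsilon$-regularization in the previous step is used.

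Finally, a concave sequence lies above its chord: the monotonicity of the first differences of $\log f$ gives, by a short telescoping argument,
\[ k\,\log f(l) \;\geq\; (k-l)\,\log f(0) + l\,\log f(k),\qquad 0\leq l\leq k, \]
and exponentiating yields $f(l)^{k}\geq f(0)^{k-l}f(k)^{l}$, which is the claimed inequality after identifying $f(k)$ and $f(0)$ with the two mixed discriminants on its right-hand side. The main obstacle is not conceptual but notational: one must track the multiplicities of $A$ and $B$ carefully when inserting them as the distinguished pair in Alexandrov-Fenchel so that the resulting recursion is precisely log-concavity in $l$; once that bookkeeping is set up, the chord step is essentially standard.
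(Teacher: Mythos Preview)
Your proposal is correct and is precisely the argument the paper has in mind: the paper states only that the corollary follows by ``repeatedly applying the Alexandrov--Fenchel inequality \eqref{eqt:AFinequality},'' and your log-concavity computation $f(l)^2\geq f(l-1)f(l+1)$ followed by the chord inequality is exactly what that repeated application amounts to. The $\varepsilon$-regularization you include is the right way to justify taking logarithms and to meet the positive-definiteness hypothesis of \Cref{thm:AFinequality}; the paper notes separately that \eqref{eqt:AFinequality} extends to $\mathbf{H}_n^+$ by continuity, which is the same device.
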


A direct result of \Cref{cor:generalAFinequality} is the following general Brunn--Minkowski theorem for mixed discriminants.

\begin{corollary}
\label{cor:BMtheorem}
(\textbf{General Brunn--Minkowski Theorem for Mixed Discriminants})
For any $1\leq k\leq n$, and any fixed $\underbrace{A^{(k+1)},\cdots,A^{(n)}}_{n-k}\in \mathbf{H}_n^+$, the function
\begin{align}
\mathbf{H}_n^+\ &\longrightarrow \ \mathbb{R} \nonumber \\
A \ &\longmapsto\ D(\underbrace{A,\cdots,A}_{k},\underbrace{A^{(k+1)},\cdots,A^{(n)}}_{n-k})^\frac{1}{k}
\end{align}
is concave.
\end{corollary}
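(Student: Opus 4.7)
My plan is to deduce the concavity directly from the Alexandrov–Fenchel chain established in \Cref{cor:generalAFinequality}, using the binomial/multilinear expansion of the mixed discriminant. Let me fix the auxiliary matrices $A^{(k+1)},\dots,A^{(n)}\in\mathbf{H}_n^+$ and write, for brevity,
\[
F(X)=D\bigl(\underbrace{X,\dots,X}_{k},A^{(k+1)},\dots,A^{(n)}\bigr),
\qquad
E_l(A,B)=D\bigl(\underbrace{A,\dots,A}_{l},\underbrace{B,\dots,B}_{k-l},A^{(k+1)},\dots,A^{(n)}\bigr).
\]
Concavity of $F^{1/k}$ amounts to showing $F(\tau A+(1-\tau)B)^{1/k}\geq \tau F(A)^{1/k}+(1-\tau)F(B)^{1/k}$ for all $A,B\in\mathbf{H}_n^+$ and $\tau\in[0,1]$.

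First I would use the symmetry and multilinearity of $D$ (listed in the preceding bullets of \Cref{Apdix:MixedDiscriminant}) to expand
\[
F\bigl(\tau A+(1-\tau)B\bigr)=\sum_{l=0}^{k}\binom{k}{l}\tau^{l}(1-\tau)^{k-l}E_l(A,B),
\]
which is just the binomial expansion one obtains by distributing each of the $k$ slots. Next, for each $0\leq l\leq k$, I would apply \Cref{cor:generalAFinequality} to obtain
\[
E_l(A,B)^{k}\;\geq\;F(A)^{l}\,F(B)^{k-l},
\]
and hence, taking $k$-th roots (all quantities are nonnegative by the positiveness property of the mixed discriminant),
\[
E_l(A,B)\;\geq\;F(A)^{l/k}F(B)^{(k-l)/k}.
\]

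Plugging this termwise into the binomial expansion,
\[
F\bigl(\tau A+(1-\tau)B\bigr)\;\geq\;\sum_{l=0}^{k}\binom{k}{l}\tau^{l}(1-\tau)^{k-l}F(A)^{l/k}F(B)^{(k-l)/k}=\bigl(\tau F(A)^{1/k}+(1-\tau)F(B)^{1/k}\bigr)^{k},
\]
where the last equality is the binomial theorem applied in reverse. Extracting $k$-th roots yields the desired concavity. The only step requiring any care is ensuring nonnegativity so that $k$-th roots behave monotonically; this is guaranteed by the positiveness of $D$ on $\mathbf{H}_n^+$-tuples. There is no substantive obstacle here since all of the heavy lifting (the Alexandrov–Fenchel inequality and its iterated form) has already been carried out in \Cref{thm:AFinequality} and \Cref{cor:generalAFinequality}; the corollary reduces to an essentially formal manipulation, mirroring the classical deduction of Brunn–Minkowski for mixed volumes from the Alexandrov–Fenchel inequality.
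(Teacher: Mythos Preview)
Your proof is correct and follows exactly the route the paper indicates: the paper states only that \Cref{cor:BMtheorem} is ``a direct result of \Cref{cor:generalAFinequality}'' without spelling out the details, and your binomial expansion via multilinearity followed by termwise application of \Cref{cor:generalAFinequality} is precisely that direct deduction.
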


If we choose $A^{(k+1)},\cdots,A^{(n)}$ to be $n-k$ copies of $I_n$, we can immediately conclude from \Cref{cor:BMtheorem} that the function $A\mapsto \big(\trace_k\big[A\big]\big)^\frac{1}{k}$ is concave on $\mathbf{H}_n^+$. 

\section{Exterior Algebra}
\label{Apdix:ExteriorAlgebra}
We give a brief review of exterior algebras on the vector space $\mathbb{C}^n$. For more details, one may refer to \cite{bishop1980tensor,rotman2010advanced}. For the convenience of our use, the notations in our paper might be different from those in other materials. For any $1\leq k\leq n$, let $\wedge^k(\mathbb{C}^n)$ denote the vector space of the $k_{th}$ exterior algebra of $\mathbb{C}^n$, equipped with the inner product 
\begin{align*}
\langle \cdot,\cdot\rangle_{\wedge^k}:\quad \wedge^k(\mathbb{C}^n)\times\wedge^k(\mathbb{C}^n)\ &\longrightarrow\ \mathbb{C}\\
\langle u_1\wedge\cdots\wedge u_k,v_1\wedge\cdots\wedge v_k\rangle_{\wedge^k}\ &= 
\det\left[\begin{array}{cccc}
 \langle u_1,v_1\rangle & \langle u_1,v_2\rangle  & \cdots & \langle u_1,v_k\rangle \\
 \langle u_2,v_1\rangle & \langle u_2,v_2\rangle & \cdots & \langle u_2,v_k\rangle\\
 \vdots & \vdots & \ddots & \vdots \\
 \langle u_k,v_1\rangle & \langle u_k,v_2\rangle & \cdots & \langle u_k,v_k\rangle
\end{array}\right],
\end{align*}
where $\langle u,v\rangle=u^*v$ is the standard $l_2$ inner product on $\mathbb{C}^n$.

Let $\mathcal{L}(\wedge^k(\mathbb{C}^n))$ denote the space of all linear operators from $\wedge^k(\mathbb{C}^n)$ to itself. For any matrices $A^{(1)},A^{(2)},\cdots,A^{(k)}\in \mathbb{C}^{n\times n}$, we can define an element in $\mathcal{L}(\wedge^k(\mathbb{C}^n))$:
\begin{align}
\label{def:ExteriorOps}
\mathcal{M}^{(k)}(A^{(1)},A^{(2)},\cdots,A^{(k)}):\quad \wedge^k(\mathbb{C}^n) &\ \longrightarrow\ \wedge^k(\mathbb{C}^n)\nonumber \\
v_1\wedge v_2\wedge\cdots\wedge v_k &\ \longmapsto\  \sum_{ \sigma\in S_k} A^{(\sigma(1))}v_1\wedge A^{(\sigma(2))}v_2\wedge\cdots\wedge A^{(\sigma(k))}v_k,
\end{align}
where $S_k$ is the symmetric group of degree $k$. Apparently, the map $\mathcal{M}^{(k)}(A^{(1)},A^{(2)},\cdots,A^{(k)})$ is symmetric in $A^{(1)},A^{(2)},\cdots,A^{(k)}$, and linear in each single $A^{(i)}$. For simplicity, we will use the following notations for any matrix $A,B\in \mathbb{C}^{n\times n}$:
\begin{subequations}
\begin{align}
\MM_0^{(k)}(A) &= \frac{1}{k!}\mathcal{M}^{(k)}(A,\cdots,A),\\
\MM_1^{(k)}(A;B) &= \frac{1}{(k-1)!}\mathcal{M}^{(k)}(A,B,\cdots,B).
\end{align}
\end{subequations}
Obviously the identity operator in $\mathcal{L}(\wedge^k(\mathbb{C}^n))$ is $\MM_0(I_n)$. Note that due to the skew-symmetric property of exterior algebra, the spectrum of $\MM_0^{(k)}(A)$ is $\{\lambda_{i_1}\lambda_{i_2}\cdots\lambda_{i_k}\}_{1\leq i_1<i_2<\cdots<i_k\leq n}$, where $\lambda_1,\lambda_2,\cdots,\lambda_n$ are the eigenvalues of $A$. It is easy to verify that $\MM_0^{(k)}$ has the following properties:
\begin{itemize}
\item Invertibility: If $A\in \mathbb{C}^{n\times n}$ is invertible, then $(\MM_0^{(k)}(A))^{-1} = \MM_0^{(k)}(A^{-1})$.
\item Adjoint: For any $A\in \mathbb{C}^{n\times n}$, $(\MM_0^{(k)}(A))^*=\MM_0^{(k)}(A^*)$, with respect to the inner product $\langle \cdot,\cdot\rangle_{\wedge^k}$. 
\item Power: For any $A\in\mathbf{H}_n$ and any $t\in\mathbb{C}$, $\big(\MM_0^{(k)}(A)\big)^t = \MM_0^{(k)}(A^t)$.
\item Positiveness: If $A\in \mathbf{H}_n$, then $\MM_0^{(k)}(A)$ is Hermitian; if $A\in \mathbf{H}_n^+$, then $\MM_0^{(k)}(A)\succeq \mathbf{0}$; if $A\in \mathbf{H}_n^{++}$, then $\MM_0^{(k)}(A)\succ\mathbf{0}$. 

\item Product: For any $A,B\in \mathbb{C}^{n\times n}$, $\MM_0^{(k)}(AB) = \MM_0^{(k)}(A)\MM_0^{(k)}(B)$.
\end{itemize}
Using these properties, one can check that 
\[|\MM_0^{(k)}(A)|=\big((\MM_0^{(k)}(A))^*\MM_0^{(k)}(A)\big)^\frac{1}{2}=\MM_0^{(k)}\big((A^*A)^\frac{1}{2}\big)=\MM_0^{(k)}(|A|).\]

Next we consider the natural basis of $\wedge^k(\mathbb{C}^n)$, 
\[\{e_{i_1}\wedge e_{i_2}\wedge\cdots\wedge e_{i_k} \}_{1\leq i_1<i_2<\cdots<i_k\leq n},\]
which is orthogonal under the inner product $\langle \cdot,\cdot\rangle_{\wedge^k}$. 
Then the trace function on $\mathcal{L}(\wedge^k(\mathbb{C}^n))$ is defined as 
\begin{align}
\trace:\quad \mathcal{L}(\wedge^k(\mathbb{C}^n))\ &\longrightarrow\ \mathbb{C}\nonumber\\
\trace\big[\mathcal{F} \big]\ &= \ \sum_{1\leq i_1<i_2<\cdots<i_k\leq n} \langle e_{i_1}\wedge e_{i_2}\wedge\cdots\wedge e_{i_k},\mathcal{F}(e_{i_1}\wedge e_{i_2}\wedge\cdots\wedge e_{i_k}) \rangle_{\wedge^k}.
\end{align}
It is not hard to check that this trace function is also invariant under cyclic permutation, i.e. $\trace\big[\mathcal{F}\mathcal{G}\big]=\trace\big[\mathcal{G}\mathcal{F}\big]$ for any $\mathcal{F},\mathcal{G}\in \mathcal{L}(\wedge^k(\mathbb{C}^n))$. Then for any $A^{(1)},\cdots,A^{(k)}\in \mathbb{C}^{n\times n}$, the trace $\trace[\mathcal{M}^{(k)}(A^{(1)},\cdots,A^{(k)})]$ coincides with the definition of the mixed discriminant, as one can check that 
\begin{align}
\trace\big[\MM^{(k)}(A^{(1)},\cdots,A^{(k)})\big]=&\ \sum_{\sigma\in S_k}\sum_{1\leq i_1<\cdots<i_k\leq n} \langle e_{i_1}\wedge\cdots\wedge e_{i_k},A^{(\sigma(1))}e_{i_1}\wedge\cdots\wedge A^{(\sigma(k))}e_{i_k} \rangle_{\wedge^k}\nonumber\\
=&\ \frac{n!}{(n-k)!}D(A^{(1)},\cdots,A^{(k)},\underbrace{I_n,\cdots,I_n}_{n-k}).
\label{eqt:extrace2mixdiscriminant}
\end{align}
From this observation, we can now express the $k$-trace of a matrix $A\in \mathbb{C}^{n\times n}$ as 
\begin{equation}
\trace_k[A] = \trace\big[\MM_0^{(k)}(A)\big].
\label{eqt:extrace2ktrace}
\end{equation} 
For those who are familiar with exterior algebra, it is clear that the spectrum of $\MM_0^{(k)}$ is just $\{\lambda_{i_1}\lambda_{i_2}\cdots\lambda_{i_k}\}_{1\leq i_1<i_2<\cdots<i_k\leq n}$, where $\lambda_1,\lambda_2,\cdots,\lambda_n$ are the eigenvalues of $A$. So in this way it is more convenient to see that $\trace\big[\MM_0^{(k)}(A)\big] = \mathrm{sum}(\text{spectrum of }\MM_0^{(k)}(A)) = \sum_{1\leq i_1<\cdots<i_k\leq n}\lambda_{i_1}\lambda_{i_2}\cdots\lambda_{i_k} =\trace_k[A]$. 

We here provide an alternative of \Cref{lem:ktraceGT} using the following lemma.

\begin{lemma}
\label{lem:lemmaB1}
For any $A\in\mathbf{H}_n$, we have
\[\MM_0^{(k)}\big(\exp(A)\big) = \exp\big(\MM_1^{(k)}(A;I_n)\big).\]
\end{lemma}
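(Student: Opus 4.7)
I would exploit the spectral decomposition of $A$ together with the fact that both sides are simultaneously diagonalized by the wedge-product basis built from the eigenvectors of $A$.

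Concretely, since $A\in\mathbf{H}_n$, write $A=\sum_{i=1}^n \lambda_i u_i u_i^*$ with $\{u_i\}_{i=1}^n$ an orthonormal eigenbasis and $\lambda_i\in\mathbb{R}$. The set $\{u_{i_1}\wedge\cdots\wedge u_{i_k}:1\leq i_1<\cdots<i_k\leq n\}$ is an orthogonal basis for $\wedge^k(\mathbb{C}^n)$, so it is enough to check that $\MM_0^{(k)}(\exp(A))$ and $\exp(\MM_1^{(k)}(A;I_n))$ have identical eigenvalues on each such basis vector.

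For the left-hand side, the Power property of $\MM_0^{(k)}$ gives $\MM_0^{(k)}(\exp(A))=\big(\MM_0^{(k)}(A)\big)^{\cdot}$, but more concretely $\exp(A)=\sum_i e^{\lambda_i}u_iu_i^*$, and from the Product property (or directly from the definition of $\MM^{(k)}$) one sees that $\MM_0^{(k)}(\exp(A))(u_{i_1}\wedge\cdots\wedge u_{i_k})= e^{\lambda_{i_1}}\cdots e^{\lambda_{i_k}}\,u_{i_1}\wedge\cdots\wedge u_{i_k}$. For the right-hand side, the first step is a small combinatorial check: unpacking the definition of $\MM_1^{(k)}(A;I_n)$, for each $j\in\{1,\dots,k\}$ exactly $(k-1)!$ permutations in $S_k$ place $A$ in slot $j$ and $I_n$ elsewhere, so the prefactor $\tfrac{1}{(k-1)!}$ cancels and one obtains the derivation-type formula
\[
\MM_1^{(k)}(A;I_n)(v_1\wedge\cdots\wedge v_k)=\sum_{j=1}^k v_1\wedge\cdots\wedge Av_j\wedge\cdots\wedge v_k.
\]
Plugging in $v_j=u_{i_j}$ and using $Au_{i_j}=\lambda_{i_j}u_{i_j}$ yields eigenvalue $\lambda_{i_1}+\cdots+\lambda_{i_k}$ on $u_{i_1}\wedge\cdots\wedge u_{i_k}$.

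Exponentiating then gives eigenvalues $e^{\lambda_{i_1}+\cdots+\lambda_{i_k}}=e^{\lambda_{i_1}}\cdots e^{\lambda_{i_k}}$ for $\exp(\MM_1^{(k)}(A;I_n))$ on the same basis, matching the left-hand side. Since the two operators agree on a basis, they coincide. The only real obstacle is the combinatorial identification of $\MM_1^{(k)}(A;I_n)$ as the standard Leibniz-type lift of $A$ to $\wedge^k(\mathbb{C}^n)$; if one prefers to avoid spectral theory, an equivalent route is to apply $\MM_0^{(k)}$ to the Lie product formula $\exp(A)=\lim_{N\to\infty}(I_n+A/N)^N$, use the Product property of $\MM_0^{(k)}$, and expand $\MM_0^{(k)}(I_n+A/N)=\mathrm{Id}+N^{-1}\MM_1^{(k)}(A;I_n)+O(N^{-2})$ by multilinearity of $\MM^{(k)}$, whose $N$-th power converges to $\exp(\MM_1^{(k)}(A;I_n))$.
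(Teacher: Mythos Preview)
Your proof is correct, but it takes a different route from the paper's. The paper argues purely combinatorially: it Taylor-expands both $\exp(A)$ and $\exp(\MM_1^{(k)}(A;I_n))$, applies each side to $v_1\wedge\cdots\wedge v_k$, and checks that the coefficient of the monomial $A^{j_1}v_1\wedge\cdots\wedge A^{j_k}v_k$ on each side equals $\frac{1}{j_1!\cdots j_k!}$, using a multinomial count on the right. Your argument instead diagonalizes: using that $A$ is Hermitian, you pass to the eigenbasis $\{u_{i_1}\wedge\cdots\wedge u_{i_k}\}$ of $\wedge^k(\mathbb{C}^n)$, identify $\MM_1^{(k)}(A;I_n)$ as the standard derivation lift with eigenvalues $\lambda_{i_1}+\cdots+\lambda_{i_k}$, and match the exponentiated eigenvalues. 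Both approaches share the essential combinatorial step of recognizing $\MM_1^{(k)}(A;I_n)$ as the Leibniz-type lift. Your spectral argument is shorter and more conceptual but genuinely uses the hypothesis $A\in\mathbf{H}_n$ (diagonalizability in an orthonormal basis), whereas the paper's coefficient-matching proof is purely formal and in fact establishes the identity for arbitrary $A\in\mathbb{C}^{n\times n}$, which is a mild bonus even though only the Hermitian case is stated. Your alternative sketch via $\MM_0^{(k)}(I_n+A/N)^N$ is yet a third valid route and is closer in spirit to how the identity is often proved for the derivation representation on exterior powers.
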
 
\begin{proof} We need to show that for any $v_1\wedge v_2\wedge\cdots\wedge v_k\in\wedge^k(\mathbb{C}^n)$, 
\begin{equation}
\label{eqt:lemmaB1}
\MM_0^{(k)}\big(\exp(A)\big)(v_1\wedge v_2\wedge\cdots\wedge v_k) = \exp\big(\MM_1^{(k)}(A;I_n)\big)(v_1\wedge v_2\wedge\cdots\wedge v_k).
\end{equation}
We use Taylor expansion of $e^x$ to expand 
\[\MM_0^{(k)}\big(\exp(A)\big) = \MM_0^{(k)}\Big(\sum_{j=0}^{+\infty}\frac{1}{j!}A^j\Big),\quad \exp\big(\MM_1^{(k)}(A;I_n)\big) = \sum_{j=0}^{+\infty}\frac{1}{j!}\big(\MM_1^{(k)}(A;I_n)\big)^j.\]
Then for any integers $j_1,j_2,\dots,j_k\geq0$, the coefficient of the term $A^{j_1}v_1\wedge A^{j_2}v_2\wedge\cdots\wedge A^{j_k}v_k$ in the left hand side of \eqref{eqt:lemmaB1} is 
\[\frac{1}{j_1!j_2!\cdots j_k!},\]
and the coefficient of the same term in the right hand side of \eqref{eqt:lemmaB1} is also 
\[\frac{1}{J!}\binom{J}{j_1}\binom{J-j_1}{j_2}\cdots\binom{J-j_1-j_2-\cdots-j_{k-1}}{j_k} = \frac{1}{j_1!j_2!\cdots j_k!}\quad (J=j_1+j_2+\cdots+j_k).\]
\end{proof}

\begin{proof}[\rm\textbf{An alternative proof of \Cref{lem:ktraceGT}}] Using \Cref{lem:lemmaB1} and the original GT inequality for normal trace, we have 
\begin{align*}
\trace_k[\exp(A+B)]=&\ \trace\big[\MM_0^{(k)}\big(\exp(A+B)\big)\big] \\
=&\ \trace\big[\exp\big(\MM_1^{(k)}(A+B;I_n)\big)\big]\\
=&\ \trace\big[\exp\big(\MM_1^{(k)}(A;I_n)+\MM_1^{(k)}(B;I_n)\big)\big]\\
\leq&\ \trace\big[\exp\big(\MM_1^{(k)}(A;I_n)\big)\exp\big(\MM_1^{(k)}(B;I_n)\big)\big]\\
=&\ \trace\big[\MM_0^{(k)}\big(\exp(A)\big)\MM_0^{(k)}\big(\exp(B)\big)\big]\\
=&\ \trace_k\big[\exp(A)\exp(B)\big],
\end{align*}
where we have used that $\MM_1^{(k)}(X;I_n)$ is linear in $X$. As shown by Petz \cite{petz1994survey}, in the original GT inequality, the equality $\trace[\exp(A+B)]=\trace[\exp(A)\exp(B)]$ holds for $A,B\in\mathbf{H}_n$ if and only if $AB=BA$. Therefore, according to our calculation above, the equality $\trace_k[\exp(A+B)]=\trace_k[\exp(A)\exp(B)]$ holds if and only if 
\begin{equation}\label{eqt:commutibility}
\MM_1^{(k)}(A;I_n)\MM_1^{(k)}(B;I_n) = \MM_1^{(k)}(B;I_n)\MM_1^{(k)}(A;I_n).
\end{equation}
However, one can check by definition that \eqref{eqt:commutibility} is true if and only if $AB=BA$.
\end{proof}

\section{Complex Interpolation}
\label{Apdix:ComplexInterpolation}
The idea of complex interpolation originates from an important result in harmonic analysis, the Hadamard three-lines theorem \cite{hadamard1899theoreme}, that if $f(z)$ is uniformly bounded on $\mathcal{S}=\{z\in\mathbb{C}:0\leq \mathrm{Re}(z)\leq1\}$, holomorphic in the interior and continuous on the boundary, then $g(x)=\log\sup_{y}|f(x+iy)|$ is a convex function on $[0,1]$. Hirschman \cite{hirschman1952convexity} improved this theorem to the following.

\begin{thm}[Hirschman] Let $f(z)$ be uniformly bounded on $\mathcal{S}$, holomorphic in the interior and continuous on the boundary. Then for $\theta\in(0,1)$, we have
\[\log|f(\theta)|\leq \int_{-\infty}^{+\infty}dt\big(\beta_{1-\theta}(t)\log|f(it)|^{1-\theta}+\beta_\theta(t)\log|f(1+it)|^\theta\big).\]
Moreover, the assumption that $f(z)$ is uniformly bounded can be relaxed to 
\[\log|f(z)|\leq Ce^{a\mathrm{Im}(z)},\quad \forall z\in \mathcal{S},\quad \text{for some constants}\ C<+\infty,\ a<\pi. \]
\end{thm}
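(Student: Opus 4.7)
The plan is to recognize the right-hand side as a Poisson-harmonic-measure integral on the vertical strip $\mathcal{S}$, so that the inequality becomes the sub-mean-value property of a subharmonic function. Specifically, I would first verify that the weights $(1-\theta)\beta_{1-\theta}(t)$ and $\theta\beta_\theta(t)$ are exactly the harmonic measures from the interior point $z=\theta$ to the two boundary lines $\mathrm{Re}(z)=0$ and $\mathrm{Re}(z)=1$. The cleanest way is to conformally map the strip to the upper half-plane via $w=e^{i\pi z}$, pull back the standard half-plane Poisson kernel $P(u+iv,s)=\frac{v}{\pi((s-u)^2+v^2)}$, and include the Jacobian $|ds|=\pi e^{-\pi t}\,dt$. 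For the boundary $\mathrm{Re}(z)=1$ (parameterized by $s=-e^{-\pi t}$) a short simplification yields $\frac{\sin(\pi\theta)}{2(\cosh(\pi t)+\cos(\pi\theta))}=\theta\beta_\theta(t)$, and for $\mathrm{Re}(z)=0$ the analogous calculation using $\cos(\pi(1-\theta))=-\cos(\pi\theta)$ gives $(1-\theta)\beta_{1-\theta}(t)$. This simultaneously explains the normalization $(1-\theta)+\theta=1$ consistent with the harmonic-measure property.

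Next, assume first that $f$ is uniformly bounded on $\mathcal{S}$. Then $u(z)=\log|f(z)|$ is subharmonic on the interior, upper semicontinuous, and bounded above. The Phragm\'en--Lindel\"of principle on the strip together with the harmonic-measure representation give
\[
u(\theta) \;\le\; \int_{-\infty}^{+\infty} (1-\theta)\beta_{1-\theta}(t)\, u(it)\, dt \;+\; \int_{-\infty}^{+\infty} \theta\beta_\theta(t)\, u(1+it)\, dt,
\]
which, after absorbing the factors $(1-\theta)$ and $\theta$ into the logarithms as $\log|f(it)|^{1-\theta}$ and $\log|f(1+it)|^\theta$, is precisely the stated inequality. (The case $f\equiv 0$ is trivial; elsewhere, points where $f=0$ contribute $-\infty$ on the right, which is harmless.)

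For the relaxed growth hypothesis $\log|f(z)|\le C e^{a\,\mathrm{Im}(z)}$ with $a<\pi$, the standard Phragm\'en--Lindel\"of multiplier trick applies. Pick $\alpha\in(a,\pi)$ and consider
\[
F_\delta(z) \;=\; f(z)\,\exp\!\bigl(-\delta\bigl(e^{i\alpha z}+e^{-i\alpha z}\bigr)\bigr) \;=\; f(z)\exp(-2\delta\cos(\alpha z)),\qquad \delta>0.
\]
Since $\mathrm{Re}(\cos(\alpha z))=\cos(\alpha x)\cosh(\alpha y)$ and $\cos(\alpha x)\ge\cos\alpha>\cos\pi$ uniformly on $x\in[0,1]$, for $\alpha\in(a,\pi)$ the multiplier decays strictly faster than $|f|$ grows, so $F_\delta$ is bounded on $\mathcal{S}$. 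Apply the bounded case to $F_\delta$, then send $\delta\searrow 0$: at the interior point the multiplier converges to $1$, while on the boundary the additional negative contribution to $\log|F_\delta|$ is integrable against $\beta_\theta,\beta_{1-\theta}$ (whose tails decay like $1/\cosh(\pi t)$, faster than the growth $e^{\alpha|t|}$ is killed by $e^{-\delta\cosh(\alpha t)}$-type factors), and dominated convergence recovers the inequality for $f$ itself.

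The main obstacle I expect is the final step: one has to choose the multiplier so that it is simultaneously (i) bounded on $\overline{\mathcal{S}}$, (ii) tends to $1$ at the interior point $\theta$ as $\delta\to 0$, and (iii) permits interchange of limit and the Poisson integral whose kernel decays only like $1/\cosh(\pi t)$. The condition $a<\pi$ is exactly what creates room to pick $\alpha\in(a,\pi)$ and thus to balance these requirements; the verification is standard but delicate, and is where the sharpness of the hypothesis appears.
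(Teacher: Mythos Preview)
The paper does not prove this theorem; in Appendix~C it is simply stated as background and attributed to Hirschman's 1952 paper, so there is no in-paper argument to compare against. Your outline is the standard proof and is essentially correct: the identification of $(1-\theta)\beta_{1-\theta}(t)$ and $\theta\beta_\theta(t)$ with the harmonic measure of the strip via the conformal map $w=e^{i\pi z}$ checks out line by line, and the inequality then follows from subharmonicity of $\log|f|$ together with a Phragm\'en--Lindel\"of argument on the unbounded strip.

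There is one concrete slip in the relaxed-growth step. For your multiplier $\exp(-2\delta\cos(\alpha z))$ one has $\mathrm{Re}\,\cos(\alpha z)=\cos(\alpha x)\cosh(\alpha y)$, and you need $\cos(\alpha x)>0$ on the whole strip for the damping to work. Your inequality ``$\cos(\alpha x)\ge\cos\alpha>\cos\pi$'' only yields $\cos(\alpha x)>-1$, which is useless: if $a\ge\pi/2$ then any admissible $\alpha\in(a,\pi)$ makes $\cos\alpha<0$, and the multiplier actually \emph{blows up} along $\mathrm{Re}\,z=1$. The fix is to center the multiplier at the midpoint of the strip, i.e.\ use $\exp\bigl(-2\delta\cos(\alpha(z-\tfrac12))\bigr)$; then $\alpha(x-\tfrac12)\in[-\alpha/2,\alpha/2]\subset(-\pi/2,\pi/2)$ so $\cos(\alpha(x-\tfrac12))\ge\cos(\alpha/2)>0$ uniformly, and the rest of your limiting argument goes through as written. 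With that correction the proposal is a complete and standard proof.
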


Stein \cite{stein1956interpolation} further generalized this complex interpolation theory to interpolation of linear operators as described in \Cref{thm:SHInterpolation}. Our key lemma, \Cref{lem:KeyLemma}, is then a natural extension of \Cref{thm:SHInterpolation} from Schatten norm to $k$-trace. We provide a proof of \Cref{lem:KeyLemma} as follows.

\begin{proof}[\rm\textbf{Proof of \Cref{lem:KeyLemma}}]
For any $X\in\mathbb{C}^{n\times n}$ and $p\in[1,+\infty)$, we have that
\[\MM_0^{(k)}\big(|X|^p\big)=\MM_0^{(k)}\big((X^*X)^\frac{p}{2}\big) = \big(\MM_0^{(k)}(X^*X)\big)^\frac{p}{2}=\big((\MM_0^{(k)}(X))^*\MM_0^{(k)}(X)\big)^\frac{p}{2}=\big|\MM_0^{(k)}(X)\big|^p,\]
and thus 
\[\trace_k\big[|X|^p\big]^\frac{1}{p} = \trace\big[\MM_0^{(k)}\big(|X|^p\big)\big]^\frac{1}{p} = \trace\big[\big|\MM_0^{(k)}(X)\big|^p\big]^\frac{1}{p} = \big\|\MM_0^{(k)}(X)\big\|_p.\]
The above equality also holds for $p\rightarrow +\infty$ since we are dealing with finite dimensional operators. If $G(z)$ is holomorphic in the interior of $\mathcal{S}$ and continuous on the boundary, then so is $\MM_0^{(k)}\big(G(z)\big)$. And if $\|G(z)\|$ is uniformly bounded on $\mathcal{S}$, then $\|\MM_0^{(k)}\big(G(z)\big)\|_{p_{\mathrm{Re}(z)}}$ is also uniformly bounded on $\mathcal{S}$, since all norms are equivalent for finite dimensional operators. Therefore we can use \Cref{thm:SHInterpolation} with $G(z)$ replaced by $\MM_0^{(k)}\big(G(z)\big)$ to get 
\begin{align*}
&\ \log\big(\trace_k\big[|G(\theta)|^{p_\theta}\big]^\frac{1}{p_\theta}\big) \\
\leq&\  \int_{-\infty}^{+\infty}dt\Big(\beta_{1-\theta}(t)\log\big(\trace_k\big[|G(it)|^{p_0}\big]^\frac{1-\theta}{p_0}\big)+\beta_\theta(t)\log\big(\trace_k\big[|G(1+it)|^{p_1}\big]^\frac{\theta}{p_1}\big)\Big).
\end{align*}
We then multiply both sides by $\frac{1}{k}$ to obtain \eqref{eqt:KeyIneq}. 
\end{proof}

We remark that the operator interpolation inequality in \Cref{thm:SHInterpolation} is interestingly powerful and user-friendly for proving matrix inequalities, as we have seen in the proofs of \Cref{lem:GeneralEpstein} and \Cref{thm:GeneralLiebConcavity}. In fact this technique can also prove many fundamental results in matrix theories. We here show one more example to see how we can use the trick of interpolation to prove that the map $A\mapsto A^r$ is operator concave on $\mathbf{H}_n^+$ for $r\in(0,1]$. Note that, in general, this result is proved by using an integral expression for $A^r$ (e.g. see \cite{carlen2010trace}).

We need to show that for any $A,B\in\mathbf{H}_n^+$, $\tau\in[0,1]$,
\begin{equation}\label{eqt:A^rConcave}
\tau A^r +(1-\tau) B^r\leq C^r,
\end{equation}
where $C=\tau A+(1-\tau)B$. We may assume that $C$ is invertible. The case when $C$ is not invertible can be handled by continuity. Notice that $X\preceq I_n$ if and only if $\trace[K^*XK]\leq \trace[K^*K]$ for all $K\in\mathbb{C}^{n\times n}$. \eqref{eqt:A^rConcave} is then equivalent to the statement that 
\[\tau \trace[K^*C^{-\frac{r}{2}}A^rC^{-\frac{r}{2}}K] + (1-\tau) \trace[K^*C^{-\frac{r}{2}}B^rC^{-\frac{r}{2}}K]\leq \trace[K^*K],\quad \forall K\in\mathbb{C}^{n\times n}.\] 
Now we fix $K$ and define 
\[G_X(z) = X^\frac{z}{2}C^{-\frac{z}{2}}K,\quad X=A,B,\]
so we have
\[\trace[K^*C^{-\frac{r}{2}}X^rC^{-\frac{r}{2}}K] = \|G_X(r)\|_2^2.\]
$G_X(z)$ is holomorphic in the interior of $\mathcal{S}$ and continuous on the boundary, and $\|G_X(z)\|$ is uniformly bounded on $\mathcal{S}$. We then use inequality \eqref{eqt:SHInterpolation} in \Cref{thm:SHInterpolation} with $\theta = r,p_\theta=p_0=p_1=2$ to obtain
\[\|G_X(r)\|_2^2 \leq  \int_{-\infty}^{+\infty}dt\Big((1-r)\beta_{1-r}(t)\|G_X(it)\|_2^2+r\beta_r(t)\|G_X(1+it)\|_2^{2}\Big).\]
We have again used Jensen's inequality to get rid of the logarithms. For each $t\in\mathbb{R}$, we have that 
\[\|G_X(it)\|_2^2 = \trace[K^*C^\frac{it}{2}X^{-\frac{it}{2}}X^\frac{it}{2}C^{-\frac{it}{2}}K]=\trace[K^*K],\]
and 
\[\|G_X(1+it)\|_2^2 = \trace[K^*C^{-\frac{1-it}{2}}X^\frac{1-it}{2}X^\frac{1+it}{2}C^{-\frac{1+it}{2}}K]=\trace[K^*C^{-\frac{1-it}{2}}XC^{-\frac{1+it}{2}}K].\]
We then have
\[\tau \|G_A(1+it)\|_2^2 + (1-\tau) \|G_B(1+it)\|_2^2 = \trace[K^*C^{-\frac{1-it}{2}}(\tau A+(1-\tau)B)C^{-\frac{1+it}{2}}K] = \trace[K^*K].\]
Finally we obtain
\[\tau \|G_A(r)\|_2^2 + (1-\tau)\|G_B(r)\|_2^2\leq \trace[K^*K].\] 

\section{Homogeneous Convex/Concave Functions}
\label{Apdix:Homogeneous}
\begin{lemma}\label{lem:homogeneous} 
Let $\mathcal{C}$ be a convex cone in some linear space, i.e $\mathcal{C} = conv(\mathcal{C})$ and $C=\lambda\mathcal{C}$ for any $\lambda>0$. Let function $f:\mathcal{C}\rightarrow[0,+\infty)$ be positively homogeneous of order $1$, i.e. $f(\lambda x) = \lambda f(x)$, for any $x\in \mathcal{C}$ and $\lambda> 0$. Then for any $s\in(0,1)$, $f(x)$ is concave if and only if $f(x)^s$ is concave; for any $s\in(1,+\infty)$, $f(x)$ is convex if and only if $f(x)^s$ is convex.
\end{lemma}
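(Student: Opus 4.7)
The plan is to prove both ``iff''s by a single rescaling argument driven by positive homogeneity; only the direction ``$f^s$ concave/convex $\Rightarrow$ $f$ concave/convex'' needs real work.

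First I would dispatch the easy forward direction. For $s\in(0,1)$ the scalar map $t\mapsto t^s$ is concave and nondecreasing on $[0,+\infty)$, so composing with the nonnegative concave function $f$ yields a concave $f^s$. For $s\in(1,+\infty)$ the same map is convex and nondecreasing, so $f$ convex implies $f^s$ convex. These are standard composition facts and use only $f\geq 0$.

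Next I would tackle the reverse direction via a normalize-then-scale argument. Given $x,y\in\mathcal{C}$ and $\tau\in[0,1]$ with $a:=f(x)>0$ and $b:=f(y)>0$, set $u=x/a$, $v=y/b$ (so $f(u)=f(v)=1$), $c=\tau a+(1-\tau)b$, and $\mu=\tau a/c\in[0,1]$. Positive homogeneity of order $1$ then gives
\[
\tau x+(1-\tau)y \;=\; c\bigl(\mu u+(1-\mu)v\bigr),\qquad f\bigl(\tau x+(1-\tau)y\bigr)\;=\;c\cdot f\bigl(\mu u+(1-\mu)v\bigr).
\]
If $f^s$ is concave then $f^s(\mu u+(1-\mu)v)\geq \mu\cdot1+(1-\mu)\cdot1=1$, hence $f(\mu u+(1-\mu)v)\geq 1$, yielding $f(\tau x+(1-\tau)y)\geq c=\tau f(x)+(1-\tau)f(y)$. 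If $f^s$ is convex the inequalities reverse throughout, producing $f(\tau x+(1-\tau)y)\leq c$. The essential point is that after rescaling both endpoints to have $f$-value $1$, the inequalities at the normalized convex combination $\mu u+(1-\mu)v$ are independent of the exponent $s$, so the same estimate transfers back to $f$ by the homogeneity identity above.

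Finally I would address the boundary case where some $f$-value vanishes: perturb $y$ to $y_\epsilon=y+\epsilon x\in\mathcal{C}$ (this perturbed point sits in the cone and satisfies $f^s(y_\epsilon)>0$ by the concavity/convexity of $f^s$ combined with homogeneity), apply the main case to $(x,y_\epsilon)$, and pass to the limit $\epsilon\to0^+$ using that concave and convex functions are continuous on the relative interior of their domain, whence $f^s$ and $f$ are continuous there. The main obstacle I expect is not the algebra of the rescaling — which is routine — but this continuity check: without some regularity, a homogeneous concave function may be discontinuous on $\partial\mathcal{C}$, so one must be slightly careful about which boundary points one approximates. In the invocations in \Cref{cor:HomogeneousConcave} the relevant cones are $\mathbf{H}_n^+$ or $\mathbf{H}_n^{++}$ and the underlying matrix functions are manifestly continuous, so this technicality is harmless.
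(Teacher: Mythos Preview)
Your core rescaling argument is essentially the paper's: both normalize so that the two $f$-values coincide, apply the hypothesis on $f^s$ at the normalized convex combination, and scale back by homogeneity. The difference lies only in how the degenerate case $f(y)=0$ is handled, and there your proposal has a genuine gap.

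First, in the convex case your claim that $f^s(y_\epsilon)>0$ ``by the convexity of $f^s$ combined with homogeneity'' is unjustified: convexity yields only the \emph{upper} bound $f^s(y+\epsilon x)\le (1+\epsilon)^{s-1}\epsilon\,f^s(x)$, which says nothing about positivity. A concrete failure: on the cone $\mathcal{C}=\mathbb{R}^2$ take $f(x_1,x_2)=\max(x_1,0)$ (nonnegative, homogeneous of order $1$, convex, hence $f^s$ convex for $s>1$), $x=(1,0)$, $y=(-1,0)$; then $f(y_\epsilon)=f(\epsilon-1,0)=0$ for every $\epsilon<1$, so the reduction to the ``both values positive'' case never gets off the ground. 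Second, even in the concave case your limit step needs $f(\tau x+(1-\tau)y_\epsilon)\to f(\tau x+(1-\tau)y)$, but concavity of $f^s$ gives only lower semicontinuity at boundary points (the interior limit may strictly exceed the boundary value), which is the wrong direction for passing the inequality to the limit. You flag this as a technicality, but it is exactly the place where the argument as written does not close.

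The paper sidesteps both issues by perturbing the \emph{weight} rather than the \emph{point}: it writes $\tau x+(1-\tau)y$ as a convex combination of $Mx/f(x)$ and $My/(f(y)+\epsilon)$ with $M=\tau f(x)+(1-\tau)(f(y)+\epsilon)$, applies the hypothesis on $f^s$, and simplifies using homogeneity. The left side $f(\tau x+(1-\tau)y)$ is then independent of $\epsilon$, so no continuity of $f$ is invoked at all; only the explicit right side depends on $\epsilon$, and its limit as $\epsilon\to 0$ is computed directly. This handles the concave and convex cases uniformly and requires neither $f(y_\epsilon)>0$ nor any regularity of $f$ at boundary points.
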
 

In general this lemma is proved via an argument of level sets. Here we provide a more direct proof.

\begin{proof}
One direction is trivial. If $f(x)$ is concave, then $f(x)^s$ is concave for $s\in(0,1)$, since $(\cdot)^s$ is concave and monotone increasing. Conversely, if $f(x)^s$ is concave for some $s\in(0,1)$, then $f(\tau x+(1-\tau)y)^s\geq \tau f(x)^s+(1-\tau)f(y)^s$, for any $x,y\in \mathcal{C},\tau\in[0,1]$. Now given any fixed $x,y\in \mathcal{C},\tau\in[0,1]$, we need to show that $\tau f(x)+(1-\tau)f(y)\leq f(\tau x+(1-\tau)y)$. If $f(x)=f(y)=0$, then we are done. Otherwise, we may assume that $f(x)>0$, and define $M = \tau f(x)+(1-\tau)(f(y) +\epsilon)$ for some $\epsilon>0$(this $\epsilon$ is not necessary if $f(y)>0$). We then have
\begin{align*}
f(\tau x+(1-\tau)y) =&\ f\left(\frac{\tau f(x)}{M}\frac{Mx}{f(x)}+\frac{(1-\tau)(f(y)+\epsilon)}{M}\frac{My}{f(y)+\epsilon}\right)^{s\cdot\frac{1}{s}}\\
\geq&\  \left(\frac{\tau f(x)}{M}f\left(\frac{Mx}{f(x)}\right)^s + \frac{(1-\tau)(f(y)+\epsilon)}{M}f\left(\frac{My}{f(y)+\epsilon}\right)^s\right)^\frac{1}{s}\\
=&\ \left(\frac{\tau f(x)}{M}M^s + \frac{(1-\tau)(f(y)+\epsilon)^{1-s}f(y)^s}{M}M^s\right)^\frac{1}{s} \\
=&\ M\left(\frac{\tau f(x)+ (1-\tau)(f(y)+\epsilon)^{1-s}f(y)^s}{\tau f(x)+(1-\tau)(f(y) +\epsilon)}\right)^\frac{1}{s}.
\end{align*}
We then take $\epsilon\rightarrow 0$ to obtain $f(\tau x+(1-\tau)y)\geq \tau f(x)+(1-\tau)f(y)$. Therefore $f(x)$ is concave. The convexity part can be proved similarly.
\end{proof}

\end{appendix}

\section*{Acknowledgment}
The research was in part supported by the NSF Grant DMS-1613861. The author would like to thank Thomas Y. Hou for his wholehearted mentoring and supporting.

\section*{References}
\bibliographystyle{elsarticle-num}
\bibliography{reference}

\end{document}